\newtheorem{same}{This should never appear}[section]
\newtheorem{defin}[same]{Definition}
\newtheorem{remark}[same]{Remark}
\newtheorem{theorem}[same]{Theorem}
\newtheorem{example}[same]{Example}
\newtheorem{lemma}[same]{Lemma}
\newtheorem{fact}[same]{Fact}
\newtheorem{question}[same]{Question}
\newtheorem{cor}[same]{Corollary}
\newtheorem{prop}[same]{Proposition}
\newtheorem{hypothesis}[same]{Hypothesis}
\newtheorem{nota}[same]{Notation}
\newtheorem{defin*}{Definition}
\newtheorem*{theorem*}{Theorem}
\newcommand{\skipitems}[1]{%
  \addtocounter{\@enumctr}{#1}%
}
\newcommand{\bb}{\mathbf{b}}
\newcommand{\rest}{\mathord{\upharpoonright}}
\newcommand{\K}{\mathbf{K}}
\newcommand{\LS}{\operatorname{LS}}
\newcommand{\rc}{(|R| + \aleph_0)^+}
\newcommand{\tilk}{\tilde{T}_\K}
\newcommand{\leap}[1]{\le_{#1}}
\newcommand{\lea}{\leap{\K}}
\newcommand{\gtp}{\mathbf{gtp}}
\newcommand{\gS}{\mathbf{gS}}
\newcommand{\pullbackcorner}[1][dl]{\save*!/#1-3pc/#1:(1,-1)@^{|-}\restore} 
\DeclareMathOperator{\pp}{pp}    
\DeclareMathOperator{\cof}{cf}    
\newbox\noforkbox \newdimen\forklinewidth
\noforkbox\hbox{\lower 2pt\box1\lower 2pt\box0\relax}
\def\unionstick{\mathop{\copy\noforkbox}\limits}
\def\nonfork_#1{\unionstick_{\textstyle #1}}
\newbox\doesforkbox
\doesforkbox\hbox{\lower 2pt\box1 \lower 2pt\box2\lower2pt\box0\relax}
\def\nunionstick{\mathop{\copy\doesforkbox}\limits}
\def\fork_#1{\nunionstick_{\textstyle #1}}
\newcommand{\s}{\mathfrak{s}}
\newcommand{\dnf}{\unionstick}
\title{Some stable non-elementary classes of modules}
\author{Marcos Mazari-Armida}
\email{mmazaria@andrew.cmu.edu}
\urladdr{http://www.math.cmu.edu/~mmazaria/ }
\address{Department of Mathematical Sciences \\ Carnegie Mellon
University \\ Pittsburgh, Pennsylvania, USA}
\begin{document}

\begin{abstract}

Fisher \cite{fisher} and Baur \cite{baur}  showed independently in the seventies that if $T$ is a complete first-order theory extending the theory of modules, then the class of models of $T$ with pure embeddings is stable.  In \cite[2.12]{maztor}, it is asked if the same is true for any abstract elementary class $(K, \leq_p)$ such that $K$ is a class of modules and $\leq_p$ is the pure submodule relation. In this paper we give some instances where this is true:

\begin{theorem} Assume $R$ is an associative ring with unity.
Let $(K, \leq_p)$ be an AEC  such that $K \subseteq R\text{-Mod}$ and $K$ is closed under finite direct sums, then:
\begin{itemize}
 \item If $K$ is closed under pure-injective envelopes, then $\K$ is $\lambda$-stable for every $\lambda \geq \LS(\K)$ such that $\lambda^{|R| + \aleph_0}= \lambda$.
 \item If $K$ is closed under pure submodules and pure epimorphic images,  then $\K$ is $\lambda$-stable for every $\lambda$ such that $\lambda^{|R| + \aleph_0}= \lambda$.

\item Assume $R$ is Von Neumann regular. If $\K$ is closed under submodules and has arbitrarily large models, then $\K$ is $\lambda$-stable for every $\lambda$ such that $\lambda^{|R| + \aleph_0}= \lambda$.
\end{itemize}
\end{theorem}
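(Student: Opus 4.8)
The plan is to fix $M\in\K$ with $\card{M}=\lambda$ and to bound the number of Galois types over $M$ by the number of positive primitive types ($\pp$-types) over $M$ in the language $L_R$ of $R$-modules, after which the classical counting of $\pp$-types finishes the job. Recall that Galois types make sense even without global amalgamation, as the classes of the transitive closure of the relation identifying $(a,N_1)$ with $(b,N_2)$, where $M\leq_p N_i\in\K$, whenever there are $N\in\K$ and pure embeddings $f_i\colon N_i\to N$ fixing $M$ with $f_1(a)=f_2(b)$. The joint embedding property is immediate from closure under finite direct sums, as $M_1,M_2\leq_p M_1\oplus M_2\in\K$; moreover, since an AEC is closed under directed unions and every infinite direct sum is the directed union of its finite subsums along split (hence pure) embeddings, $\K$ is in fact closed under arbitrary direct sums.

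The crucial reduction is the equivalence, for $a,b$ in pure extensions of $M$ inside $\K$, that $\gtp(a/M)=\gtp(b/M)$ if and only if $a$ and $b$ realize the same $\pp$-type over $M$. One direction is automatic, since a pure embedding --- in particular an isomorphism --- fixing $M$ preserves every $\pp$-formula with parameters in $M$; thus assigning to a Galois type its $\pp$-type is well defined, and this assignment is injective, which yields $\#\{\text{Galois types over }M\}\leq\#\{\pp\text{-types over }M\}$, exactly when the converse holds. The converse is the main obstacle. Equality of $\pp$-types over $M$ produces an $L_R$-isomorphism $\langle M,a\rangle\to\langle M,b\rangle$ over $M$ sending $a$ to $b$; what must be shown is that this identification can be realized by a single amalgam $N\in\K$ with pure embeddings over $M$. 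Each of the three hypotheses supplies such an amalgam, and this is where all the real work lies.

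I would treat the cases as follows. When $\K$ is closed under pure-injective envelopes, pass to algebraically compact models: a pure-injective module is an amalgamation base, and over it any two realizations of a common $\pp$-type over a pure submodule $M$ are conjugate by an automorphism fixing $M$; closure under pure-injective envelopes together with finite direct sums keeps the relevant envelopes and amalgams inside $\K$, so the identification is available. This route may inflate $\LS(\K)$ by the size of envelopes, which is why $\lambda\geq\LS(\K)$ is assumed explicitly here. When $\K$ is closed under pure submodules and pure-epimorphic images, the pushout $N_1\oplus_M N_2$ lies in $\K$, being a pure-epimorphic image of $N_1\oplus N_2\in\K$ along the antidiagonal copy of $M$ (which is pure in the direct sum); a further pure-epimorphic quotient collapsing the two realizations of the shared $\pp$-type then produces the desired amalgam, and checking that this last quotient preserves purity of the embeddings --- using that $a$ and $b$ have the same $\pp$-type over $M$ --- is the delicate step. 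When $R$ is von Neumann regular, every left module is flat, so for each $0\to A\to B\to C\to 0$ one has $\operatorname{Tor}_1(X,C)=0$ for every right module $X$, whence every monomorphism of $R$-modules is pure; thus $\leq_p$ coincides with $\leq$, closure under submodules becomes closure under pure submodules, and $\pp$-formulas reduce to quantifier-free ones, simplifying both the identification and the counting. Producing the amalgam inside $\K$ is the subtlest point here and is where closure under submodules together with the existence of arbitrarily large models is used.

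It remains to count $\pp$-types. Up to equivalence there are at most $\card{R}+\aleph_0$ many $\pp$-formulas, and by the standard analysis exploiting the coset and lattice structure of $\pp$-definable subgroups --- the bound underlying the Fisher and Baur stability theorem --- the number of $\pp$-$n$-types, for every finite $n$, over a model of cardinality $\lambda$ is at most $\lambda^{\card{R}+\aleph_0}$. Under the hypothesis $\lambda^{\card{R}+\aleph_0}=\lambda$ this equals $\lambda$, so by the equivalence above the number of Galois types over $M$ is at most $\lambda$ and $\K$ is $\lambda$-stable. In the second and third items the hypothesis already forces $\lambda\geq 2^{\card{R}+\aleph_0}\geq\LS(\K)$, so no separate cardinal assumption is needed, whereas in the first it is stated explicitly to absorb the larger L\"owenheim--Skolem number created by pure-injective envelopes.
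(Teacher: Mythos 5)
Your outline is essentially the paper's argument for the first bullet (amalgamate inside the pure-injective envelopes using direct sums, conclude that Galois types coincide with $\pp$-types, then count $\pp$-types via Fisher--Baur), but for the second and third bullets the plan has genuine gaps. For the second bullet you assert that after forming the pushout $N_1\oplus_M N_2$ one can take ``a further pure-epimorphic quotient collapsing the two realizations of the shared $\pp$-type,'' flagging the purity check as ``the delicate step''; this is precisely the step that is not known to go through, and the paper explicitly records that it does not know whether Galois types are $\pp$-syntactic under these hypotheses. The quotient of the pushout by the submodule generated by $a-b$ is not in general a pure epimorphism, and the induced maps from $N_1$ and $N_2$ need not remain pure, so closure under pure epimorphic images does not apply to it. The paper takes a different route entirely: it defines an independence relation from pushout squares, shows it is weakly stable, proves local character over bases of size $|R|+\aleph_0$ by an $\omega$-chain construction, and then derives $\lambda$-stability from local character plus uniqueness by the standard pigeonhole count --- no identification of Galois types with $\pp$-types is needed.

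For the third bullet your plan to ``produce the amalgam inside $\K$'' cannot work as stated, because a class closed under submodules need not have amalgamation at all (the paper's own example of $\aleph_1$-free groups is of this kind). The mechanism the paper uses is that over a von Neumann regular ring every monomorphism is pure, hence $(R\text{-Mod},\leq_p)$ admits intersections and $K$ is closed under pure submodules; in a class admitting intersections, equality of Galois types is equivalent to an isomorphism of the generated closures over the base, with no amalgam required, so stability transfers from $R\text{-Mod}$ (covered by the first bullet) down to $\K$. You correctly identify the key algebraic fact that all monomorphisms are pure, but you do not supply the closure/intersection argument that replaces amalgamation. Finally, a minor point on the counting in the first bullet: the bound $\lambda^{|R|+\aleph_0}$ on the number of realized $\pp$-types over $M$ is a statement about types realized in models of a fixed complete theory, so one must first partition the extensions realizing the types into the at most $2^{|R|+\aleph_0}$ elementary equivalence classes, as the paper does in its counting lemma.
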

 
As an application of these results we give new characterizations of noetherian rings, pure-semisimple rings, Dedekind domains, and fields via superstability. Moreover, we show how these results can be used to show a link between being \emph{good} in the stability hierarchy and being \emph{good} in the axiomatizability hierarchy.

 Another application is the existence of universal models with respect to pure embeddings in several classes of modules. Among them,  the class of flat modules and the class of $\s$-torsion modules.

.

\end{abstract}


\maketitle

{\let\thefootnote\relax\footnote{{AMS 2020 Subject Classification:
Primary: 03C48 Secondary: 03C45, 03C60, 13L05, 16D10, 16P40.
Key words and phrases. Stability; Abstract Elementary
Classes; Superstability; Noetherian rings; Pure-semisimple rings; Universal models.}}}

\tableofcontents

\section{Introduction}

An abstract elementary class $\K$ (AEC for short) is a pair $\K=(K \lea)$ where $K$ is a class of structures and $\lea$ is a partial order on $K$ extending the substructure relation such that $\K$ is closed under direct limits and satisfies the coherence property and an instance of the Downward L\"{o}wenheim-Skolem theorem. These were introduced by Shelah in \cite{sh88}. In this paper, we will study  AECs of modules with respect to pure embeddings, i.e., classes of the form $(K, \leq_p)$ where $K$ is a class of $R$-modules for a fixed ring $R$ and $\leq_p$ is the pure submodule relation.

Fisher \cite{fisher} and Baur \cite[Theo 1]{baur} showed independently in the seventies that if $T$ is a complete first-order theory extending the theory of modules, then $(Mod(T), \leq_p)$ is $\lambda$-stable for every $\lambda$ such that $\lambda^{|R| + \aleph_0}= \lambda$. A modern proof can be consulted in \cite[3.1]{prest}. After realizing that many other classes of modules with pure embeddings were stable such as: abelian groups \cite[3.16]{kuma}, torsion-free abelian groups \cite[0.3]{baldwine}, torsion abelian groups \cite[4.8]{maztor}, complete elementary classes of torsion modules \cite[5.16]{bontor}, reduced torsion-free abelian groups \cite[1.2]{sh820}, definable subclasses of modules \cite[3.16]{kuma}, and flat $R$-modules \cite[4.3]{lrvcell}; it was asked in \cite[2.12]{maztor} the following question:

\begin{question}\label{mainq} Let $R$ be an associative ring with unity.
If $(K, \leq_p)$ is an abstract elementary class such that $K \subseteq R\text{-Mod}$, is $(K, \leq_p)$ stable? Is this true if $R=\mathbb{Z}$? Under what conditions on $R$ is this true?
\end{question}

In this paper, we show that many classes of modules are stable. The way we approach the problem is by showing that if the class has some nice algebraic properties then it has to be stable. This approach is new, covers most of the examples known to be stable\footnote{The only set of examples that this approach does not cover is that of classes axiomatizable by complete first-order theories (both classical and their torsion part as in \cite[\S 5]{bontor}).} and can be used to give many new examples. Prior results focused on syntactic characterizations of the classes or only obtained stability results for specific examples.

Firstly, we study classes closed under direct sums and pure-injective envelopes. These include absolutely pure modules, locally injective modules, locally pure-injective modules, reduced torsion-free groups, and definable subclasses of modules (see Example \ref{ex1}).

\textbf{Theorem \ref{stat1}.} \textit{Assume $\K= (K , \leq_{p})$ is an AEC with $K \subseteq R\text{-Mod}$ for $R$ an associative ring with unity such that $K$ is closed under direct sums and pure-injective envelopes. If $\lambda^{|R| + \aleph_0}=\lambda$ and $\lambda \geq \LS(\K)$, then $\K$ is $\lambda$-stable.}

By characterizing the limit models in these classes (Lemma \ref{bigpi} and Lemma \ref{countablelim}), we are able to obtain new characterizations of noetherian rings, pure-semisimple  rings, Dedekind domains, and fields via superstability. An example of such a result is the next assertion which extends \cite[4.30]{maz1}. 

\textbf{ Theorem \ref{absss}.}\textit{
Let $R$ be an associative ring with unity. $R$ is left noetherian if and only if the class of absolutely pure left $R$-modules with pure embeddings is superstable.}

Moreover, the above result can be used to show a link between being \emph{good} in the stability hierarchy and being \emph{good} in the axiomatizability hierarchy. More precisely, if the class of absolutely pure modules with pure embeddings is superstable, then it is first-order axiomatizable (see Corollary \ref{good}).

The results for these classes of modules can also be used to partially solve Question \ref{mainq} if one substitutes \emph{stable} for \emph{superstable}.

\textbf{ Lemma \ref{pss}.}\textit{
Let $R$ be an associative ring with unity. The following are equivalent. 
\begin{enumerate}
\item $R$ is left pure-semisimple.
\item Every AEC $\K= (K , \leq_{p})$ with $K \subseteq R\text{-Mod}$, such that $K$ is closed under direct sums, is superstable.
\end{enumerate}
}

Secondly, we study classes closed under direct sums, pure submodules, and pure epimorphic images. These include flat modules, torsion abelian groups, $\s$-torsion modules, and any class axiomatized by an $F$-sentence (see Example \ref{ex2}).

\textbf{Theorem \ref{statf}.} \textit{Assume $\K= (K , \leq_{p})$ is an AEC with $K \subseteq R\text{-Mod}$ for $R$ an associative ring with unity such that $K$ is closed under direct sums, pure submodules, and pure epimorphic images. If $\lambda^{|R| + \aleph_0}=\lambda$, then $\K$ is $\lambda$-stable.}

This result can be used to construct universal models with respect to pure embeddings. In particular, we obtain the next result which extends \cite[1.2]{sh820}, \cite[4.6]{mazf}, and \cite[3.7]{maztor}. 

\textbf{Corollary \ref{universal2}.}\textit{ Let $R$ be an associative ring with unity. 
If $\lambda^{|R| + \aleph_0}=\lambda$ or $\forall \mu < \lambda( \mu^{|R| + \aleph_0} <
\lambda)$, then there is a universal model in the class of flat $R$-modules with pure embeddings and in the class of $\s$-torsion $R$-modules with pure embeddings of cardinality $\lambda$.}

Finally, we study classes of modules that are closed under pure submodules and that are contained in a well-understood class of modules which is closed under pure submodules and that admits intersections. The main examples for this case are subclasses of the class of torsion-free groups such as $\aleph_1$-free-groups and finitely Butler groups (see Example \ref{ex3}). 

We use the results obtained for these classes of modules to provide a partial solution to Question \ref{mainq}. 

\textbf{ Lemma \ref{von}.}\textit{
Assume $R$ is a Von Neumann regular ring.
If $K$ is closed under submodules and has arbitrarily large models, then $\K =(K, \leq_p)$ is $\lambda$-stable if $\lambda^{ |R| + \aleph_0} = \lambda$.}

The paper is organized as follows. Section 2 presents necessary background. Section 3 studies classes closed under direct sums and pure-injective envelopes. Section 4 studies classes closed under direct sums, pure submodules, and pure epimorphic images. Section 5 studies classes of modules that are closed under pure submodules and that are contained in a well-understood class of modules which is closed under pure submodules and that admits intersection.

This paper was written while the author was working on a Ph.D. under the direction of Rami Grossberg at Carnegie Mellon University and I would like to thank Professor Grossberg for his guidance and assistance in my research in general and in this work in particular. I would like to thank Thomas G. Kucera for letting me include Lemma \ref{stru1} in this paper. I would like to thank John T. Baldwin, Ivo Herzog, Samson Leung, and Philip Rothmaler  for comments that help improve the paper. I  am  grateful  to  an anonymous referee  for  many  comments  that  help improve the paper.

\section{Preliminaries}

In this section, we recall the necessary notions from abstract elementary classes, independence relations, and module theory that are used in this paper.

\subsection{Abstract elementary classes} We briefly present the notions of abstract elementary classes that are used in this paper. These are further studied in \cite[\S 4 - 8]{baldwinbook09} and  \cite[\S 2, \S 4.4]{ramibook}.  An introduction from an algebraic perspective is given in \cite[\S 2]{maztor}.

Abstract elementary classes (AECs for short) were introduced  by Shelah in \cite{sh88} to study those classes of structures axiomatized in $L_{\omega_1, \omega}(Q)$. An AEC is a pair $\K=(K \lea)$ where $K$ is a class of structures and $\lea$ is a partial order on $K$ extending the substructure relation such that $\K$ is closed under direct limits and satisfies the coherence property and an instance of the Downward L\"{o}wenheim-Skolem theorem. The reader can consult the definition in \cite[4.1]{baldwinbook09}.

Given a model $M$, we will write $|M|$ for its underlying set and $\| M \|$ for its cardinality.  Given $\lambda$ a cardinal and $\K$ an AEC, we denote by $\K_{\lambda}$ the models in $\K$ of cardinality $\lambda$. Moreover, if we write ``$f: M \to N$", we assume that
$f$ is a $\K$-embedding, i.e., $f: M \cong f[M]$ and $f[M] \lea N$.
In particular, $\K$-embeddings are always monomorphisms.

Shelah introduced a notion of semantic type in \cite{sh300}. Following \cite{grossberg2002}, we call
these semantic types Galois-types. Given $(\bb, A, N)$, where $N \in \K$, $A \subseteq |N|$, and $\bb$ is a sequence in $N$, the \emph{Galois-type of $\bb$ over $A$ in $N$}, denoted by $\gtp_{\K} (\bb / A; N)$, is the equivalence class of $(\bb, A, N)$ modulo $E^\K$; $E^\K$ is the transitive closure of $E_{\text{at}}^{\K}$ where $(\bb_1, A_1, N_1)E_{\text{at}}^{\K} (\bb_2, A_2, N_2)$ if $A
:= A_1 = A_2$, and there exist $\K$-embeddings $f_\ell : N_\ell \xrightarrow[A]{} N$ for $\ell \in \{ 1, 2\}$ such that
$f_1 (\bb_1) = f_2 (\bb_2)$ and $N \in \K$. Given $p=\gtp_{\K}(\bb/A; N)$ and $C \subseteq A$, let $p\upharpoonright{C}= [(\bb, C, N)]_{E^\K}$.

If $M \in K$ and $\alpha$ is an ordinal, let $\gS^\alpha_{\K}(M)= \{  \gtp_{\K}(\bb / M; N) : M
\leq_{\K} N\in \K \text{ and } \bb \in N^\alpha\} $. When $\alpha =1$, we write $\gS_{\K}(M)$ instead of $\gS^1_{\K}(M)$. We let $\gS^{< \infty}_{\K}(M) = \bigcup_{\alpha \in OR} \gS^\alpha_{\K}(M)$.

Since Galois-types are equivalence classes, they might not be determined by their finite restrictions. We say that $\K$ is fully \emph{$(< \aleph_0)$-tame} if for any $M \in \K$ and $p \neq q \in \gS^{< \infty}(M)$,  there is $A \subseteq |M|$ such that $|A |< \aleph_0$ and $p\upharpoonright{A} \neq q\upharpoonright{A}$. Tameness was isolated by Grossberg and VanDieren in \cite{tamenessone}.

We now introduce the main notion of this paper.

\begin{defin} An AEC $\K$ is \emph{$\lambda$-stable}  if for any $M \in
\K_\lambda$, $| \gS_{\K}(M) | \leq \lambda$.  
\end{defin}

Recall that a model $M$ is \emph{universal over} $N$ if and only if $\| N\|= \| M\|=\lambda $ and for every $N^* \in \K_{\lambda}$ such that $N \lea N^*$, there is $f: N^* \xrightarrow[N]{} M$. Let us recall the notion of limit model.

\begin{defin}\label{limit}
Let $\lambda$ be an infinite cardinal and $\alpha < \lambda^+$ be a limit ordinal.  $M$ is a \emph{$(\lambda,
\alpha)$-limit model over} $N$ if and only if there is $\{ M_i : i <
\alpha\}\subseteq \K_\lambda$ an increasing continuous chain such
that:
\begin{enumerate}
\item $M_0 =N$.
\item $M= \bigcup_{i < \alpha} M_i$.
\item $M_{i+1}$ is universal over $M_i$ for each $i <
\alpha$.
\end{enumerate}

$M$ is a $(\lambda, \alpha)$-limit model if there is $N \in
\K_\lambda$ such that $M$ is a $(\lambda, \alpha)$-limit model over
$N$. $M$ is a $\lambda$-limit model if there is a limit ordinal
$\alpha < \lambda^+$ such that $M$  is a $(\lambda,
\alpha)$-limit model.

\end{defin}

 We say that $\K$ has \emph{uniqueness of limit models of cardinality $\lambda$} if $\K$ has $\lambda$-limit models and if any two $\lambda$-limit models are isomorphic. We introduce the notion of superstability for AECs.

\begin{defin}
$\K$ is a \emph{superstable} AEC if and only if $\K$ has uniqueness of limit models on a tail of cardinals.
\end{defin}

\begin{remark}
In \cite[1.3]{grva} and \cite{vaseyt} was shown that for AECs that have amalgamation, joint embedding, no maximal models and are tame, the definition above  is equivalent to every other definition of superstability considered in the context of AECs. In particular for a complete first-order theory $T$, $(Mod(T), \preceq)$ is superstable if and only if $T$ is $\lambda$-stable for every $\lambda \geq 2^{|T|}$. 
\end{remark} 

Finally, recall that a model $M \in \K$ is a \emph{universal model in
$\K_\lambda$} if $M \in \K_\lambda$ and if given any $N \in \K_\lambda$, there is a $\K$-embedding $f: N \to M$. We say that $\K$ has a universal model of cardinality $\lambda$ if there is a universal model in $\K_\lambda$. It is well-known that if $\K$ is an AEC with the joint embedding property and $M$ is a $\lambda$-limit model, then $M$ is universal in $\K_\lambda$.

\subsection{Independence relations} We recall the basic properties of independence relations on arbitrary categories. These were introduced and studied in detail in \cite{lrv1}.

\begin{defin}[{ \cite[3.4]{lrv1}}] An independence relation on a category $\mathcal{C}$ is a set $\dnf$ of commutative squares such that for any commutative diagram:

\[
  \xymatrix@=3pc{
    & & E \\
    B \ar[r]^{g_1}\ar@/^/[rru]^{h_1} & D \ar[ru]^t  & \\
    A \ar [u]^{f_1} \ar[r]_{f_2} & C \ar[u]_{g_2} \ar@/_/[ruu]_{h_2} &
  }
\]

we have that $(f_1, f_2, g_1, g_2) \in \dnf$ if and only if $(f_1, f_2, h_1, h_2) \in \dnf$.

\end{defin} 

We will be particularly interested in weakly stable independence relations. Recall that an independence relation $\dnf$ is \emph{weakly stable} if it satisfies: symmetry \cite[3.9]{lrv1},  existence \cite[3.10]{lrv1}, uniqueness \cite[3.13]{lrv1}, and transitivity \cite[3.15]{lrv1}.

They also introduced the notion of a stable independence relation for any category $\mathcal{C}$ in \cite[3.24]{lrv1}. As the definition is long and we will only study independence relations on AECs, we introduce the definition for AECs instead. For an AEC $\K$, an indepedence relation $\dnf$ is \emph{stable} if it is weakly stable and satisfies local character \cite[8.6]{lrv1} and the witness property \cite[8.7]{lrv1}.

\subsection{Module Theory}  We succinctly introduce the notions from module theory that are used in this paper. These are further studied in \cite{prest}.

All rings considered in this paper are associative with unity. In the rest of the paper, if we mention that $R$ is a ring, we are assuming that it is associative with unity. All the classes studied in this paper have as their language the standard language of modules, i.e., for a ring $R$ we take $L_{R}= \{0, +,-\} \cup \{ r\cdot  : r \in R \}$. A formula $\phi$ is a positive primitive formula ($pp$-formula for short), if $\phi$ is an existentially quantified finite system of linear equations. Given $\bar{b} \in M^{< \infty}$ and $M \subseteq N$, the $pp$-type of $\bar{b}$ over $M$ in $N$, denoted by $\pp(\bar{b}/M , N)$, is the set of $pp$-formulas with parameters in $M$ that hold for $\bar{b}$ in $N$.

Given $M$ and $N$ $R$-modules, $M$ is a \emph{pure submodule} of $N$, denoted by $M \leq_{p} N$, if and only if $M$ is a submodule of $N$  and for every $\bar{a} \in M^{< \omega}$, $pp(\bar{a}/ \emptyset, M)= pp(\bar{a}/\emptyset , N)$. Moreover, $f: M \to N$ is a \emph{pure epimorphism} if $f$ is an epimorphism and the kernel of $f$ is a pure submodule of $M$.

Recall that a module $M$ is \emph{pure-injective} if for every $N$, if $M$ is a pure submodule of $N$, then $M$ is a direct summand of $N$. Given a module $M$, the \emph{pure-injective envelope of $M$}, denoted by
$PE(M)$, is a pure-injective module such that $M \leq_{p}
PE(M)$ and it is minimum with respect to this property. Its existence
follows from \cite[3.6]{ziegler} and the fact that every module can be
embedded into a pure-injective module.

 The following Schr\"{o}der-Bernstein property of pure-injective modules will be useful.

\begin{fact}[{\cite[2.5]{gks}}]\label{ipi}
 Let $M, N$ be pure-injective modules. If there are $f: M \to N$ a pure embedding and $g: N \to M$ a pure embedding, then $M$ and $N$ are isomorphic. 
\end{fact}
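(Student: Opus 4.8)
The statement is a Schr\"oder--Bernstein theorem for pure-injective modules, so the plan is to first convert the two pure embeddings into splitting maps and then run a Bumby-style telescoping argument in which pure-injectivity is used a second time to close up the construction. Since $f\colon M \to N$ is a pure embedding, $f[M]$ is a pure submodule of $N$; as $f[M]\cong M$ is pure-injective, $f[M]$ is a direct summand, so $N \cong M \oplus C$ for some $C$. Symmetrically, $g\colon N\to M$ gives $M \cong N \oplus D$ for some $D$. Thus each of $M, N$ is isomorphic to a direct summand of the other.

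Next comes the crux. Mutual direct-summand-ness does not force $M\cong N$ for arbitrary modules, so pure-injectivity must enter again. I would set $\theta = g\circ f\colon M\to M$, which is a pure embedding, so its image $\theta[M]=g[f[M]]$ is a pure submodule isomorphic to $M$, hence again a direct summand: $M = \theta[M]\oplus Q$. Unwinding the decompositions gives $Q = g[N_1]\oplus M_1$, where $N = f[M]\oplus N_1$ and $M=g[N]\oplus M_1$. Iterating $\theta$ yields the telescoping identity $M = \theta^n[M]\oplus \bigoplus_{k<n}\theta^k[Q]$ for every $n$, with each $\theta^k[Q]\cong Q$. Writing $M_\infty = \bigcap_n \theta^n[M]$ and $R=\bigoplus_{k<\omega}\theta^k[Q]$, one checks easily that $M_\infty\cap R = 0$ (an element of $R$ lies in some $\bigoplus_{k<n}\theta^k[Q]$, which meets $\theta^n[M]\supseteq M_\infty$ trivially) and that $\theta$ restricts to an automorphism of $M_\infty$. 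Granting the limiting decomposition $M = M_\infty\oplus R$, a short computation finishes the proof: since $N\cong g[N] = \theta[M]\oplus g[N_1] = M_\infty\oplus\theta[R]\oplus g[N_1]$, and $\theta[R]\cong Q^{(\omega)}$ absorbs the summand $g[N_1]$ of $Q$ by a shift, we get $N\cong M_\infty\oplus Q^{(\omega)}$; as $R\cong Q^{(\omega)}$, this is exactly $M_\infty\oplus R\cong M$.

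The one nontrivial point, and where I expect the real work to be, is justifying the limiting decomposition $M = M_\infty\oplus R$. The submodule $R$ is pure in $M$, being a directed union of the pure direct summands $\bigoplus_{k<n}\theta^k[Q]$, but a countable direct sum of pure-injectives need not be pure-injective, so $R$ is \emph{not} automatically a direct summand; this is precisely the step that fails for general modules, and the only available leverage is the algebraic compactness of pure-injective modules. I would either (a) use algebraic compactness directly to solve for a complement of $R$ and identify it with $M_\infty$, or, more cleanly, (b) invoke the theorem that pure-injective (algebraically compact) modules have the full exchange property, which splits the telescoping chain at the limit and thereby delivers the isomorphism $M\cong N$.
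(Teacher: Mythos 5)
First, a remark on the comparison itself: the paper offers no proof of this statement --- it is imported as a black-box Fact with the citation [GKS18, 2.5] --- so the only meaningful benchmark is the proof in that cited source. Most of your argument is sound: the reduction to split embeddings, the identification $Q=g[N_1]\oplus M_1$, the telescoping identity $M=\theta^{n}[M]\oplus\bigoplus_{k<n}\theta^{k}[Q]$, and the verifications that $M_\infty\cap\bigoplus_{k<\omega}\theta^{k}[Q]=0$ and that $\theta$ restricts to an automorphism of $M_\infty$ are all correct. But the step you flag as the crux is not merely nontrivial; it is false. Take $M=N=(\mathbb{Z}/p\mathbb{Z})^{\omega}$ (a product of finite, hence pure-injective, modules, so pure-injective), $g=\mathrm{id}$, and $f=\theta$ the right shift. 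Then $\theta$ is a split (hence pure) embedding, $Q\cong\mathbb{Z}/p\mathbb{Z}$, $M_\infty=\bigcap_n\theta^{n}[M]=0$, and $\bigoplus_{k<\omega}\theta^{k}[Q]=(\mathbb{Z}/p\mathbb{Z})^{(\omega)}$ is a \emph{proper} pure submodule of $M$. The obstruction is exactly the one you describe: an element of $M$ determines a coherent sequence of components living in the direct product $\prod_{k}\theta^{k}[Q]$ rather than the direct sum, and pure-injectivity of $M$ does not repair this. Consequently fix (a) cannot work --- in the example $M_\infty=0$ is certainly not a complement of $(\mathbb{Z}/p\mathbb{Z})^{(\omega)}$ in $(\mathbb{Z}/p\mathbb{Z})^{\omega}$ --- and the terminal computation of your proof would output $N\cong M_\infty\oplus Q^{(\omega)}=(\mathbb{Z}/p\mathbb{Z})^{(\omega)}$, which is false. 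So the gap is fatal to the argument as written, not a removable technicality.

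Fix (b) points at a true theorem --- algebraically compact modules do have the full exchange property (Zimmermann-Huisgen and Zimmermann) --- but you give no argument for how the exchange property ``splits the telescoping chain at the limit,'' and this is not a formality: the exchange property refines a given decomposition of an ambient module across a summand; it does not split directed unions of chains of summands, which is what your construction needs. The cited source avoids the telescoping construction altogether and argues instead through the endomorphism ring of a pure-injective module (that $\mathrm{End}(M)/J(\mathrm{End}(M))$ is von Neumann regular and right self-injective, with idempotents lifting modulo the radical), deducing the Schr\"{o}der--Bernstein property from the structure theory of such rings. So what is needed is not a patch to the limit step but a different architecture for the proof.
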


$M$ is \emph{$\Sigma$-pure-injective} if $M^{(\aleph_0)}$ is pure-injective where $M^{(\aleph_0)}$ denotes the countable direct sum of $M$. The next four properties of $\Sigma$-pure-injective modules will be useful. The first three bullet points follow from \cite[2.11]{prest}.

\begin{fact}\label{spi}\
\begin{itemize}
\item If $N$ is $\Sigma$-pure-injective, then $N$ is pure-injective.
\item If $N$ is $\Sigma$-pure-injective and $M \leq_{p} N$, then $M$ is $\Sigma$-pure-injective.
\item If $N$ is $\Sigma$-pure-injective and $M$ is elementary equivalent to $N$, then $M$ is $\Sigma$-pure-injective.
\item (\cite[3.2]{prest}) If $N$ is $\Sigma$-pure-injective, then $(Mod(Th(N)), \leq_p)$ is $\lambda$-stable for every $\lambda \geq |Th(N)|$. For the models of the theory of $N$, being a pure submodule is the same as being an elementary submodule by $pp$-quantifier elimination. 
\end{itemize}
\end{fact}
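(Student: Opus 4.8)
My plan is to run everything through the classical characterization underlying \cite[2.11]{prest}: a module $N$ is $\Sigma$-pure-injective if and only if $N$ has the descending chain condition (DCC) on $pp$-definable subgroups, i.e.\ there is no infinite strictly descending chain of subgroups of the form $\phi(N) = \{\bar a \in N^n : N \models \phi(\bar a)\}$ with $\phi$ a $pp$-formula. Granting this, the first bullet is immediate: $N$ is a direct summand of $N^{(\aleph_0)}$ (inclusion into the first coordinate, with the obvious projection), and a direct summand of a pure-injective (equivalently, algebraically compact) module is again pure-injective; since $N^{(\aleph_0)}$ is pure-injective by hypothesis, so is $N$.

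For the second bullet, assume $M \leq_p N$ and suppose we are given a descending chain $\phi_0(M) \supseteq \phi_1(M) \supseteq \cdots$ of $pp$-definable subgroups of $M$. First I would replace $\phi_i$ by the conjunction $\psi_i := \bigwedge_{j \le i}\phi_j$, which is again $pp$, satisfies $\psi_{i+1} \vdash \psi_i$ in \emph{every} module, and has $\psi_i(M) = \phi_i(M)$ because the chain is descending. Then $\psi_{i+1}(N) \subseteq \psi_i(N)$ is a descending chain in $N$ and stabilizes by the DCC for $N$; since purity gives $\psi_i(M) = \psi_i(N)\cap M^n$, the chain in $M$ stabilizes as well, so $M$ has the DCC and is $\Sigma$-pure-injective.

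The third bullet uses the same reduction contrapositively. If $M \equiv N$ but $M$ failed the DCC, I would take an infinite strictly descending chain and, by the conjunction trick above, present it as $\psi_0(M) \supsetneq \psi_1(M) \supsetneq \cdots$ with $\psi_{i+1} \vdash \psi_i$ universally. Each strict step is recorded by the first-order sentence $\exists \bar x\,(\psi_i(\bar x) \wedge \neg\, \psi_{i+1}(\bar x))$, which holds in $M$ and hence, by elementary equivalence, in $N$; this yields an infinite strictly descending chain $\psi_0(N) \supsetneq \psi_1(N) \supsetneq \cdots$, contradicting the DCC for $N$. Thus $M$ has the DCC and is $\Sigma$-pure-injective.

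The last bullet I would deduce directly from \cite[3.2]{prest}: $\Sigma$-pure-injectivity of $N$ makes $Th(N)$ totally transcendental, hence $\lambda$-stable for every $\lambda \geq |Th(N)|$ in the elementary sense; and by Baur's $pp$-elimination of quantifiers a pure submodule of a model of $Th(N)$ is already an elementary submodule, so on $Mod(Th(N))$ pure embeddings coincide with elementary embeddings and the Galois-types agree with the first-order types, transferring $\lambda$-stability verbatim. The step I expect to be the crux is the pair of chain transfers in the second and third bullets: the work is entirely in the conjunction trick, which converts an arbitrary descending chain into one given by formulas with \emph{universal} implications, since only then can purity (bullet two) and the first-order witnesses of strictness (bullet three) be applied term by term. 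Having the DCC characterization of $\Sigma$-pure-injectivity on hand --- the genuinely nontrivial input from \cite[2.11]{prest} --- is what makes these transfers possible at all.
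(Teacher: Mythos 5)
Your argument is correct: the DCC-on-$pp$-definable-subgroups characterization is exactly the content of \cite[2.11]{prest}, and your conjunction trick correctly reduces arbitrary descending chains to ones with universally valid implications so that purity (second bullet) and elementary equivalence via the strictness sentences $\exists \bar x\,(\psi_i \wedge \neg \psi_{i+1})$ (third bullet) can be applied; the first and fourth bullets are handled just as in the standard treatment. The paper itself gives no proof --- it simply cites \cite[2.11]{prest} for the first three items and \cite[3.2]{prest} for the last --- so your writeup is in effect an unpacking of those citations rather than a divergence from the paper's approach.
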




\section{Classes closed under pure-injective envelopes}

In this section we study classes closed under direct sums and pure-injective envelopes. We show that they are always stable and we give an algebraic characterization of when they are superstable. 

\begin{hypothesis}\label{hyp1}
Let $\K= (K , \leq_{p})$ be an AEC with $K \subseteq R\text{-Mod}$ for a fixed ring $R$ such that: 
\begin{enumerate}
\item $K$ is closed under direct sums.
\item $K$ is closed under pure-injective envelopes, i.e., if $M \in K$, then $PE(M) \in K$.
\end{enumerate}
\end{hypothesis}

\begin{remark}
Most of the results in this section assume the above hypothesis, but not all of them. We will explicitly mention when we assume the hypothesis.
\end{remark}

Below we give some examples of classes of modules satisfying Hypothesis \ref{hyp1}. 

\begin{example}\label{ex1}\
\begin{enumerate}
\item $(R\text{-AbsP}, \leq_{p})$ where $R\text{-AbsP}$ is the class of absolutely pure $R$-modules. A module $M$ is absolutely pure if it is pure in every module containing it. It is an AEC because being a pure submodule is tested by finite tuples and because it is closed under pure submodules \cite[2.3.5]{prest09}. Closure under direct sums follows from \cite[2.3.5]{prest09} and closure under pure-injective envelopes follows from \cite[4.3.12]{prest09}.

\item $(R\text{-l-inj}, \leq_p)$ where $R\text{-l-inj}$ is the class of locally injective $R$-modules (also called finitely injective modules). A module $M$ is locally injective if given $\bar{a} \in M^{< \omega}$ there is an injective submodule of $M$ containing $\bar{a}$.  It is an AEC because we only test for finite tuples and because the cardinality of the injective envelope of a finite tuple is bounded by $2^{|R| + \aleph_0}$ \cite[Theo 1]{eklof}. Closure under direct sums is clear and closure under pure-injective envelopes follows from the fact that locally injective modules are absolutely pure \cite[3.1]{rara} and \cite[4.3.12]{prest09}.

\item $(R\text{-l-pi}, \leq_p)$ where $R\text{-l-pi}$ is the class of locally pure-injective $R$-modules. A module $M$ is locally pure-injective if given $\bar{a} \in M^{< \omega}$ there is a pure-injective pure submodule of $M$ containing $\bar{a}$. It is an AEC because we only test for finite tuples and because the cardinality of the pure-injective envelope of a finite tuple is bounded by $2^{|R| + \aleph_0}$ \cite[3.11]{ziegler}. Closure under direct sums and pure-injective envelopes follow from \cite[2.4]{zimm2}.

\item $(\text{RTF}, \leq_p)$ where  \text{RTF} is the class of reduced torsion-free abelian groups. A group $G$ is reduced if it does not have non-trivial divisible subgroups. It is an AEC because the intersection of a torsion-free divisible subgroup with a torsion-free pure subgroup is a divisible subgroup and because it is closed under pure subgroups. Closure under direct sums is easy to check, while closure under pure-injective envelopes follows from \cite[6.4.3]{fucb}. 

\item $(R\text{-Flat}, \leq_{p})$ where $R\text{-Flat}$ is the class of flat $R$-modules under the additional assumption that the pure-injective envelope of every flat modules is flat.\footnote{These rings were introduced in \cite{roth} and this class was studied in detail in \cite[\S 3]{mazf}.} It is an AEC because it is closed under direct limits and pure submodules. Closure under direct sums is easy to check and we are assuming closure under pure-injective envelopes. 
\item $(\chi, \leq_{p})$ where $\chi$ is a definable category of modules in the sense of \cite[\S 3.4]{prest09}. A class of modules is definable if it is closed under direct products, direct limits and pure submodules. It is an AEC because it is closed under direct limits and pure submodules. Closure under pure-injective envelopes follows from \cite[4.3.21]{prest09}. 
\end{enumerate}
\end{example}

\begin{remark}
It is worth mentioning that none of the above examples are first-order axiomatizable with the exception of the last one.
\end{remark}

\subsection{Stability} We begin by showing some structural properties of the classes satisfying Hypothesis \ref{hyp1}. The argument for the amalgamation property is due to T.G. Kucera. 

\begin{lemma}\label{stru1}
If $\K$ satisfies Hypothesis \ref{hyp1}, then $\K$ has joint embedding, amalgamation, no maximal models and $|R| + \aleph_0 \leq \LS(\K)$.  
\end{lemma}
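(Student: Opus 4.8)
The plan is to establish the four properties separately, exploiting the two closure hypotheses (direct sums and pure-injective envelopes) as the only algebraic inputs, plus the general fact that pure embeddings between $R$-modules are tested on finite tuples. First I would dispose of the cardinality bound $|R| + \aleph_0 \leq \LS(\K)$: since the language $L_R$ has $|R| + \aleph_0$ symbols, any nontrivial module already has size at least $|R|$ when $R$ is infinite (via the submodule generated by a single element together with scalar multiplications), and in any case the L\"owenheim--Skolem number of an AEC in a language of this size is at least the size of the language, so this is essentially bookkeeping.

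For \textbf{no maximal models}, the idea is to use closure under direct sums: given any $M \in K$, form $M \oplus M' $ for a suitable nonzero $M' \in K$ (for instance $M' = M$), and observe that the canonical inclusion $M \to M \oplus M'$ is a pure embedding that is not onto, so $M <_p M \oplus M'$. I would need to check that $M \oplus M'$ is strictly larger, which holds as soon as $K$ contains a nonzero module; if $K$ were to contain only the zero module the statement is trivial or vacuous, so I would note that case. For \textbf{joint embedding}, given $M, N \in K$, closure under direct sums gives $M \oplus N \in K$, and both $M$ and $N$ embed purely into $M \oplus N$ as direct summands (a direct summand is always a pure submodule), so $M \oplus N$ witnesses joint embedding.

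The main work, and the step I expect to be the hardest, is \textbf{amalgamation} --- and this is exactly the part the author credits to Kucera. Here the plan is to exploit pure-injective envelopes. Suppose we have pure embeddings $f_1 : M \to N_1$ and $f_2 : M \to N_2$ with $M, N_1, N_2 \in K$. The natural strategy is to pass to pure-injective envelopes: since $K$ is closed under $PE(-)$, we have $PE(N_1), PE(N_2) \in K$, and $PE(M) \in K$ embeds purely into each of them by the minimality/universal property of the pure-injective envelope (a pure embedding $M \to N_\ell$ extends to a pure embedding $PE(M) \to PE(N_\ell)$, using that $PE(N_\ell)$ is pure-injective and $M \leq_p PE(M)$ so that $PE(M)$ embeds over $M$). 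The delicate point is then to amalgamate $PE(N_1)$ and $PE(N_2)$ over $PE(M)$ inside a single model of $K$: because $PE(M)$ is pure-injective, it is a direct summand of $PE(N_1)$, say $PE(N_1) = PE(M) \oplus C_1$, and likewise $PE(N_2) = PE(M) \oplus C_2$; then $PE(M) \oplus C_1 \oplus C_2$, which lies in $K$ by closure under direct sums, receives pure embeddings of both $PE(N_1)$ and $PE(N_2)$ agreeing on $PE(M)$. Composing back down gives an amalgam of $N_1, N_2$ over $M$. The obstacle I anticipate is verifying carefully that these maps genuinely agree on the image of $M$ (not merely on $PE(M)$ abstractly) and that each composite is a pure embedding in the sense required for $\K$-embeddings; the splitting of $PE(N_\ell)$ as $PE(M) \oplus C_\ell$ must be arranged so that the copy of $PE(M)$ is exactly the image of the extended map, which is where the minimality of the pure-injective envelope and Fact \ref{ipi}-style rigidity considerations enter.

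Throughout, I would lean on the two standing facts that a direct summand is automatically a pure submodule and that every pure embedding of finite-character classes lifts to pure-injective envelopes; once amalgamation is in hand, joint embedding and no maximal models are comparatively routine, and the Löwenheim--Skolem inequality is immediate from the language size. Thus the real content is concentrated in the pure-injective-envelope amalgamation argument.
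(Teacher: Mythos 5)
Your overall strategy for amalgamation is the right one --- pass to pure-injective envelopes, use minimality to get $PE(M)\leq_p PE(N_1),PE(N_2)$ all inside $K$, and use pure-injectivity of $PE(M)$ to split $PE(N_\ell)=PE(M)\oplus C_\ell$ --- but there is a genuine gap at the point where you name the amalgam. You take $PE(M)\oplus C_1\oplus C_2$ and assert it lies in $K$ ``by closure under direct sums.'' Closure under direct sums only puts $A\oplus B$ in $K$ when $A$ and $B$ are themselves in $K$; Hypothesis \ref{hyp1} gives no closure under direct summands or pure submodules, so there is no reason for the complements $C_1,C_2$ to belong to $K$, and hence no reason for your proposed amalgam to belong to $K$. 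This is not a cosmetic issue: the author flags exactly this obstruction again in the proof of Lemma \ref{pp=gtp} (``it might be the case that $H^{N}(M\cup\{\bar b_1\})$ and $H^{N}(M\cup\{\bar b_2\})$ are not in $K$''), which is why the argument there repeats the same workaround.

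The paper's fix is to amalgamate inside $L:=PE(N_1)\oplus PE(N_2)$, which \emph{is} in $K$ because both summands are (by closure under pure-injective envelopes), and then to send $N_1$ and $N_2$ in ``diagonally'': writing $L=(PE(M)\oplus C_1)\oplus(PE(M)\oplus C_2)$, one defines $f(m+c_1)=(m,c_1,m,0)$ and $g(m+c_2)=(m,0,m,c_2)$, which are pure (indeed split) embeddings agreeing on $M$. Your worries about whether the maps agree on the actual image of $M$ and whether the splitting is compatible with the envelope of $M$ are handled by minimality exactly as you anticipate, and your treatment of joint embedding, no maximal models, and the L\"owenheim--Skolem bound matches the paper and is fine. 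The only repair needed is to replace $PE(M)\oplus C_1\oplus C_2$ by $PE(N_1)\oplus PE(N_2)$ and adjust the embeddings accordingly.
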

\begin{proof}
Joint embedding and no maximal models follow directly from closure under direct sums. So we show the amalgamation property. 

Let $M \leq_p N_1, N_2$ be models of $K$. By minimality of the pure-injective envelope we obtain that $PE(M) \leq_p PE(N_1), PE(N_2)$ and observe that all of these models are in $K$ by closure under pure-injective envelopes.

Let $L := PE(N_1) \oplus PE(N_2)$ which is in $K$ by closure under direct sums. Now, as $PE(M)$ is pure-injective, there are $N_1'$ and $N_2'$ such that $PE(N_i)= PE(M) \oplus N_i'$ for $i \in \{ 1, 2\}$. Hence, $L = (PE(M) \oplus N_1') \oplus (PE(M) \oplus N_2')$. Define $f: N_1 \to L$ by 
$f(m + n_1)=(m, n_1, m, 0)$ for $m \in PE(M)$ and $n_1 \in N_1'$ and 
$g: N_2 \to L$ by $g(m + n_2)=(m, 0, m, n_{2})$ for $m \in PE(M)$ and $n_2 \in N_2'$.
One can show that $f, g$ are pure embeddings such that 
$f\rest{M}= g\rest{M}$.
\end{proof}

The next abstraction is a first step toward a general solution to Question \ref{mainq}. We thank an anonymous referee for suggesting this approach. 

\begin{defin}  The Galois-types in $\K$ are \emph{$pp$-syntactic} if for every  $M, N_1, N_2 \in \K$,  $M \lea  N_1, N_2$, $\bar{b}_{1}
\in  N_1^{<\infty}$  and $\bar{b}_{2} \in N_2^{<\infty}$ we have that:
 \[ \gtp_\K(\bar{b}_{1}/M; N_1) = \gtp_\K(\bar{b}_{2}/M; N_2) \text{ if
and
only if } \pp(\bar{b}_{1}/M , N_1) = \pp(\bar{b}_{2}/M, N_2).\]


\end{defin}

\begin{remark}
It is straightforward to show that if Galois-types in $\K$ are \emph{$pp$-syntactic}, then $\K$ is fully $(< \aleph_0)$-tame.
\end{remark}

Our main result regarding classes where Galois-types are $pp$-syntactic is the following.

\begin{theorem}\label{gstat}
Assume $\K= (K , \leq_{p})$ is an AEC with $K \subseteq R\text{-Mod}$ for a fixed ring $R$.  If the Galois-types in $\K$ are $pp$-syntactic, then $\K$ is $\lambda$-stable for every $\lambda \geq \LS(\K)$ such that $\lambda^{|R| + \aleph_0}=\lambda$.
\end{theorem}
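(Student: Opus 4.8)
The plan is to reduce the problem of counting Galois-types over a model $M$ of cardinality $\lambda$ to the problem of counting $pp$-types, which is a purely syntactic task already well understood from the Fisher--Baur theorem. The hypothesis that Galois-types are $pp$-syntactic gives an injection from $\gS_\K(M)$ into the set of $pp$-types $\pp(\bar{b}/M, N)$ realized over $M$, so it suffices to bound the number of such $pp$-types by $\lambda$.

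First I would fix $M \in \K_\lambda$ and consider the map sending a Galois-type $\gtp_\K(b/M; N)$ to the corresponding $pp$-type $\pp(b/M, N)$. By the definition of $pp$-syntactic Galois-types (applied with singletons $b$), this map is well-defined and injective, so $|\gS_\K(M)| \leq |\{ \pp(b/M, N) : M \leq_p N \in \K, b \in N \}|$. The task then becomes bounding the number of $1$-element $pp$-types over $M$. A $pp$-type over $M$ is a set of $pp$-formulas with parameters in $M$; since the number of $pp$-formulas in the language $L_R$ is $|R| + \aleph_0$ (up to equivalence each is an existentially quantified finite system of $R$-linear equations), and each such formula uses finitely many parameters from $M$, the total number of $pp$-formulas with parameters in $M$ is at most $\lambda^{<\omega} \cdot (|R| + \aleph_0) = \lambda$ (using $\lambda \geq \LS(\K) \geq |R| + \aleph_0$ from Lemma \ref{stru1}). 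A $pp$-type is a subset of this set of formulas, so a crude bound gives $2^\lambda$ types, which is too many. The correct bound uses the algebraic structure of $pp$-types: a $pp$-type over $M$ is determined by the $pp$-formulas it contains, but by standard module theory (the coset/Baur--Monk analysis underlying \cite[3.1]{prest}) the number of complete $pp$-$1$-types over a model of size $\lambda$ is exactly $\lambda^{|R|+\aleph_0}$, which equals $\lambda$ by hypothesis.

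More carefully, I would invoke the Fisher--Baur stability bound directly rather than reprove it. For each $b$ over $M$, the $pp$-type $\pp(b/M, N)$ is a complete $pp$-$1$-type in the sense of the theory of modules, and the classical computation (cf.\ \cite[3.1]{prest}) shows that the number of such types over a parameter set of size $\lambda$ is at most $\lambda^{|R|+\aleph_0}$. Since $\lambda^{|R|+\aleph_0} = \lambda$ by assumption, we get $|\gS_\K(M)| \leq \lambda$, which is exactly $\lambda$-stability. The step where I would be most careful is ensuring the classical $pp$-type counting applies: the Fisher--Baur argument is stated for models of a complete theory of modules, but the counting of $pp$-$1$-types over a fixed parameter set $M$ is insensitive to which class $N$ ranges over, because a $pp$-type is purely a set of $pp$-formulas over $M$ and the bound depends only on $|M|$, $|R|$, and the combinatorics of positive primitive formulas. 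One should note that realizability in some $N \in \K$ only restricts the collection of types, so the bound for the ambient theory of modules remains an upper bound.

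The main obstacle is the type-counting step: the naive bound of $2^\lambda$ on arbitrary subsets of the $\lambda$-sized formula set is far too weak, and the genuine input is the Baur--Monk / Fisher--Baur result that complete $pp$-$1$-types are controlled by the lattice of $pp$-definable subgroups and their indices, yielding the sharp bound $\lambda^{|R|+\aleph_0}$. Everything else --- the injection from Galois-types to $pp$-types, and the arithmetic $\lambda^{|R|+\aleph_0}=\lambda$ --- is routine once this classical counting is in hand.
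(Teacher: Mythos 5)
Your proposal is correct and follows essentially the same route as the paper: the $pp$-syntactic hypothesis gives an injection from $\gS_{\K}(M)$ into the set of $pp$-$1$-types over $M$ realized in $\K$-extensions, and the Fisher--Baur counting then bounds the latter by $\lambda^{|R|+\aleph_0}=\lambda$. The one point of divergence is the step you yourself flag: \cite[3.1]{prest} is stated for a complete first-order theory of modules, while your witnesses $N$ range over modules with possibly different complete theories. The paper resolves this by choosing a set $\Delta$ of at most $2^{|R|+\aleph_0}$ representatives of the elementary-equivalence classes of the witnesses, applying the complete-theory stability bound to each $S^{Th(N_j)}(M)$ (after identifying it with $S_{pp}^{Th(N_j)}(M)$ via $pp$-quantifier elimination), and summing to get $2^{|R|+\aleph_0}\cdot\lambda=\lambda$. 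Your alternative --- that the count of $pp$-$1$-types over $M$ is theory-independent because for each $pp$-formula $\phi(x,\bar y)$ the set of parameters $\bar m\in M$ with $\phi(x,\bar m)$ in the type is either empty or a coset of $\{\bar m\in M^{l(\bar y)} : M\models\phi(0,\bar m)\}$, a subgroup depending only on $M$ (here purity of $M$ in $N$ is used) --- is also valid and slightly more direct, at the cost of reopening the classical coset argument rather than citing the stability of complete theories as a black box.
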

\begin{proof}
Let $\lambda \geq \LS(\K)$ such that $\lambda^{|R| + \aleph_0}=\lambda$ and $M \in \K_\lambda$. Let $\{ p_i : i < \kappa \}$ be an enumeration without repetitions of $\gS(M)$. For every $i < \kappa$ fix a pair $(a_i, N_i)$ such that $p_i=\gtp(a_i/M; N_i)$. Let $\Delta \subseteq \kappa$ such that $| \Delta | \leq 2^{|R| + \aleph_0}$ and for every $i < \kappa$ there is a $j \in \Delta$ such that $N_i$ is elementarily equivalent to $N_j$. This is possible because there are at most $2^{|R| + \aleph_0}$ complete theories over $R$.

Let $\Phi: \kappa \to \bigcup_{j \in \Delta} S_{pp}^{Th(N_j)}(M)$ be such that $\Phi(i)= pp(a_i/M, N_i)$. By the choice of $\Delta$ and the hypothesis that Galois-types in $\K$ are $pp$-syntactic we have that $\Phi$ is a well-defined injective function, so $\kappa  \leq  |\bigcup_{j \in \Delta} S_{pp}^{Th(N_j)}(M)|$. Observe that for every $j \in \Delta$ we have that $|S_{pp}^{Th(N_j)}(M)|=|S^{Th(N_j)}(M)|$ by $pp$-quantifier elimination (see \cite[\S 2.4]{prest}). Hence $|\bigcup_{j \in \Delta} S_{pp}^{Th(N_j)}(M)|= |\bigcup_{j \in \Delta} S^{Th(N_j)}(M)|\leq \sum_{j \in \Delta} | S^{Th(N_j)}(M)|$. Since every complete first-order theory of modules is $\lambda$-stable if $\lambda^{|R| + \aleph_0}=\lambda$ by \cite[3.1]{prest} and $|\Delta | \leq 2^{|R| + \aleph_0}$, we have that $\sum_{j \in \Delta} | S^{Th(N_j)}(M)| \leq \lambda$. Hence, $\kappa \leq \lambda$. Therefore, $\K$ is $\lambda$-stable.  \end{proof} 

We show that Galois-types are $pp$-syntactic for classes satisfying Hypothesis \ref{hyp1}.. The result is similar to \cite[3.14]{kuma}, but the argument given there cannot be applied in this setting. A similar argument than that of \cite[4.4]{mazf} works in the more general setting of classes satisfying Hypothesis \ref{hyp1}.

\begin{lemma}\label{pp=gtp} Assume $\K$ satisfies Hypothesis \ref{hyp1}. Then Galois-types in $\K$ are $pp$-syntactic.
\end{lemma}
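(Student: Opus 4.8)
We must show that for $M, N_1, N_2 \in \K$ with $M \leq_p N_1, N_2$ and tuples $\bar b_1 \in N_1^{<\infty}$, $\bar b_2 \in N_2^{<\infty}$, equality of Galois-types is equivalent to equality of $pp$-types. The forward direction (Galois equal $\Rightarrow$ $pp$ equal) should be routine: if the two triples are $E^\K$-equivalent then, chasing through the transitive closure of $E_{\mathrm{at}}^\K$, one gets $\K$-embeddings fixing $M$ that identify $\bar b_1$ with $\bar b_2$ inside a common model; since $\K$-embeddings are pure embeddings they preserve $pp$-formulas over $M$, so the $pp$-types agree. Indeed one does not even need the hypothesis for this direction. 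The substance is the converse.

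\smallskip
\emph{The converse.}
Assume $\pp(\bar b_1/M, N_1) = \pp(\bar b_2/M, N_2)$; I want to build a single model $N \in \K$ and $\K$-embeddings $f_\ell : N_\ell \to N$ fixing $M$ with $f_1(\bar b_1) = f_2(\bar b_2)$, witnessing $\gtp_\K(\bar b_1/M;N_1) = \gtp_\K(\bar b_2/M;N_2)$. The idea is to pass to pure-injective envelopes, where pure embeddings extend and where equality of $pp$-types is exactly what controls maps. First I would replace each $N_\ell$ by $PE(N_\ell)$, which lies in $K$ by Hypothesis \ref{hyp1}(2), and note $N_\ell \leq_p PE(N_\ell)$, so it suffices to amalgamate the two pure-injective envelopes over $M$ identifying the tuples. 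The key classical fact is that for pure-injective modules, equality of $pp$-types over a pure submodule yields a partial isomorphism that extends: because $\pp(\bar b_1/M, PE(N_1)) = \pp(\bar b_2/M, PE(N_2))$, the assignment $M \cup \bar b_1 \mapsto M \cup \bar b_2$ is a $pp$-type-preserving correspondence, and on pure-injective modules such a correspondence extends to a pure embedding of $PE(M\bar b_1)$ into $PE(N_2)$, or one amalgamates directly using pure-injectivity of the target.

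\smallskip
\emph{Assembling the amalgam.}
Concretely, I would take $PE(M\bar b_1)$ (the pure-injective envelope inside $PE(N_1)$ of the pure closure of $M \cup \bar b_1$); equality of $pp$-types gives a pure embedding $g : PE(M\bar b_1) \to PE(N_2)$ fixing $M$ and sending $\bar b_1 \mapsto \bar b_2$. Since $PE(M\bar b_1)$ is pure-injective and $PE(M\bar b_1) \leq_p PE(N_1)$, it is a direct summand, say $PE(N_1) = PE(M\bar b_1) \oplus N'$. Then $N := PE(N_2) \oplus N'$ lies in $K$ by closure under direct sums, the map $f_2 : N_2 \leq_p PE(N_2) \hookrightarrow N$ is the obvious inclusion, and $f_1 : N_1 \leq_p PE(N_1) = PE(M\bar b_1) \oplus N' \to PE(N_2) \oplus N' = N$ is $g \oplus \id_{N'}$; both are pure embeddings fixing $M$ with $f_1(\bar b_1) = \bar b_2 = f_2(\bar b_2)$, exactly as in the amalgamation construction of Lemma \ref{stru1}. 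This gives the desired equality of Galois-types.

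\smallskip
\emph{The main obstacle.}
The delicate step is the existence of the $pp$-type-preserving pure embedding $g$ between pure-injective envelopes: I must verify that equality of $pp$-types of $\bar b_1, \bar b_2$ over $M$ really does extend from the finite tuples to a full pure embedding on the pure-injective envelopes, fixing $M$ pointwise. This is where pure-injectivity is essential (it is false for arbitrary modules) and where I expect to invoke the standard theory of pure-injective modules together with Fact \ref{ipi}; care is needed with infinite tuples $\bar b_\ell \in N_\ell^{<\infty}$, since ``finite system of linear equations'' in the $pp$-type only controls finitely many coordinates at a time, so the extension argument must be carried out coherently over all finite subtuples and then pieced together using pure-injectivity of the target.
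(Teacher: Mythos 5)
Your overall strategy is essentially the paper's: pass to pure-injective envelopes, use the uniqueness of hulls of $pp$-types in pure-injective modules to obtain an isomorphism $h$ between the hulls of $M\cup\{\bar b_1\}$ and $M\cup\{\bar b_2\}$ that fixes $M$ and matches the tuples, and then amalgamate via a direct-sum decomposition as in Lemma \ref{stru1}. The ``key classical fact'' you leave as your main obstacle is exactly \cite[3.6]{ziegler} (cited by the paper for this purpose), and it also disposes of your worry about infinite tuples, since the hull is determined by the $pp$-types of all finite subtuples over $M$; your equality of $pp$-types over $M$ does give equality of the joint $pp$-types of $\bar b_i$ with every finite tuple from $M$, which is what that fact needs. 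So that step is fine modulo the citation. (The paper first uses amalgamation to reduce to $N_1=N_2$ pure-injective and works with hulls inside that single module; your version with two envelopes is an inessential variant.)

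There is, however, one genuine gap, and it is precisely the point the paper flags as the reason one cannot ``simply apply the amalgamation property'': your final amalgam is $N:=PE(N_2)\oplus N'$ where $PE(N_1)=PE(M\bar b_1)\oplus N'$, and you need $N\in K$. Closure under direct sums only yields this when both summands lie in $K$, and nothing in Hypothesis \ref{hyp1} guarantees that the complement $N'$ of the hull is in $K$ --- the hypothesis does not include closure under pure submodules or direct summands, and the hull $PE(M\bar b_1)$ itself need not be in $K$ either. The repair is the doubling trick of Lemma \ref{stru1}: writing $N=H^{N}(M\cup\{\bar b_i\})\oplus L_i$ for $i\in\{1,2\}$ (after reducing to $N_1=N_2=N$ pure-injective), take $L=(H^{N}(M\cup\{\bar b_2\})\oplus L_1)\oplus(H^{N}(M\cup\{\bar b_2\})\oplus L_2)$. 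Each of the two summands is isomorphic to $N$ itself (the first via $h^{-1}\oplus\mathrm{id}_{L_1}$, the second being $N$), hence lies in $K$, so $L\in K$ by closure under direct sums and isomorphism; the maps $f(s+l_1)=(h(s),l_1,h(s),0)$ and $g(q+l_2)=(q,0,q,l_2)$ then witness the equality of Galois-types. Your argument becomes correct once your target is rebuilt in this way from copies of models already known to be in $K$.
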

\begin{proof}
The forward direction is trivial so we show the backward direction. As $K$ has the amalgamation property we may assume that $N_1=N_2$ and since $K$ is closed under pure-injective envelopes we may assume that $N_1 = N_2$ is pure-injective. Let $N = N_1 =N_2$. Then by \cite[3.6]{ziegler} there is \[ h: H^{N}(M \cup \{\bar{b}_1 \}) \cong_{M} H^{N}(M \cup \{ \bar{b}_2 \}) \] with $h(\bar{b}_1)= \bar{b}_2$ where $H^{N}(M \cup \{\bar{b}_i\})$ denotes the pure-injective envelope of $M \cup \{\bar{b}_i\}$ inside $N$ for every $i \in \{ 1, 2 \}$.

As it might be the case that $H^{N}(M \cup \{\bar{b}_1 \})$ and $H^{N}(M \cup \{ \bar{b}_2 \})$ are not in $K$, we can not simply apply the amalgamation property and instead we have to do a similar argument to that of Lemma \ref{stru1}.

Given $ i \in \{1, 2\}$, $H^{N}(M \cup \{\bar{b}_i\})$ is pure-injective so there is $L_i$ such that $N=H^{N}(M \cup \{\bar{b}_i\}) \oplus L_i$.  Let $L= (H^{N}(M \cup \{\bar{b}_2\}) \oplus L_1) \oplus (H^{N}(M \cup \{\bar{b}_2\}) \oplus L_2)$. Observe that $L \in K$ by closure under direct sums. Define $f: N=H^{N}(M \cup \{\bar{b}_1\}) \oplus L_1 \to L$ by $f(s + l_1)=(h(s), l_1, h(s), 0)$ for $s \in H^{N}(M \cup \{\bar{b}_1\})$ and $l_1 \in L_1$. Define $g: N=H^{N}(M \cup \{\bar{b}_2\}) \oplus L_2 \to L$ by $g(q + l_2)=(q, 0, q, l_2)$ for $q \in H^{N}(M \cup \{\bar{b}_2\})$ and $l_2 \in L_2$. It is easy to check that $L, f, g$ witness that $\gtp(\bar{b}_{1}/M; N_1) = \gtp(\bar{b}_{2}/M; N_2)$. \end{proof} 

An  immediate corollary is that the classes satisfying Hypthesis \ref{hyp1} are tame.

\begin{cor}\label{fullt}
If $\K$ satisfies Hypothesis \ref{hyp1}, then $\K$ is fully $(< \aleph_0)$-tame. 
\end{cor}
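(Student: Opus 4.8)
The plan is to deduce Corollary \ref{fullt} directly from the two facts already established in this section, namely Lemma \ref{pp=gtp} together with the Remark immediately following the definition of $pp$-syntactic Galois-types. The logical chain is short: Hypothesis \ref{hyp1} implies (via Lemma \ref{pp=gtp}) that Galois-types in $\K$ are $pp$-syntactic, and being $pp$-syntactic implies full $(<\aleph_0)$-tameness by that Remark. So the entire content of the proof is a two-step invocation of results proved above.

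First I would record that $\K$ satisfies Hypothesis \ref{hyp1} by assumption, so Lemma \ref{pp=gtp} applies and the Galois-types in $\K$ are $pp$-syntactic. Next I would unwind what the Remark asserts in enough detail to make the implication transparent, since the Remark only states it is ``straightforward.'' Concretely, suppose $M \in \K$ and $p \neq q \in \gS^{<\infty}(M)$, witnessed by $p = \gtp_\K(\bar{b}_1/M; N_1)$ and $q = \gtp_\K(\bar{b}_2/M; N_2)$. Because Galois-types are $pp$-syntactic, $p \neq q$ forces $\pp(\bar{b}_1/M, N_1) \neq \pp(\bar{b}_2/M, N_2)$, so there is a $pp$-formula $\phi(\bar{x}, \bar{a})$ lying in exactly one of the two $pp$-types. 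The parameter tuple $\bar{a}$ is finite and drawn from $|M|$, so taking $A = \bar{a} \subseteq |M|$ gives a finite set with $\pp(\bar{b}_1/A, N_1) \neq \pp(\bar{b}_2/A, N_2)$, and applying the $pp$-syntactic characterization again over $A$ yields $p\rest{A} \neq q\rest{A}$. This is exactly full $(<\aleph_0)$-tameness.

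There is no real obstacle here: the work has all been done in Lemma \ref{pp=gtp}, and the only thing to be careful about is the bookkeeping that a $pp$-formula has finitely many parameters, so that the separating set $A$ can be taken finite. For a corollary of this kind I would keep the proof to one or two sentences, citing Lemma \ref{pp=gtp} and the preceding Remark, rather than spelling out the finite-parameter argument in full.

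\begin{proof}
Since $\K$ satisfies Hypothesis \ref{hyp1}, the Galois-types in $\K$ are $pp$-syntactic by Lemma \ref{pp=gtp}. By the Remark following the definition of $pp$-syntactic Galois-types, this implies that $\K$ is fully $(<\aleph_0)$-tame.
\end{proof}
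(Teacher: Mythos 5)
Your proof is correct and follows exactly the route the paper intends: the paper states this as an immediate consequence of Lemma \ref{pp=gtp} together with the Remark that $pp$-syntactic Galois-types imply full $(<\aleph_0)$-tameness, and offers no further argument. Your unwinding of the ``straightforward'' Remark (a separating $pp$-formula has only finitely many parameters, and only the trivial direction of $pp$-syntacticity is needed over the resulting finite set) is the right bookkeeping.
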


The next results follows from Theorem \ref{gstat} and Lemma \ref{pp=gtp}.

\begin{theorem}\label{stat1}
Assume $\K$ satisfies Hypothesis \ref{hyp1} and $\lambda \geq \LS(\K)$.  If $\lambda^{|R| + \aleph_0}=\lambda$, then $\K$ is $\lambda$-stable.
\end{theorem}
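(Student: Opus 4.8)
The plan is to obtain the statement as an immediate composition of the two results just established, so the proof is essentially a bookkeeping step that lines up their hypotheses and conclusions. First I would invoke Lemma \ref{pp=gtp}: since $\K$ satisfies Hypothesis \ref{hyp1}, the Galois-types in $\K$ are $pp$-syntactic. This is the substantive input, and its verification was front-loaded into Lemma \ref{pp=gtp}, where it rested on the amalgamation property from Lemma \ref{stru1} together with the decomposition of pure-injective envelopes inside a common pure-injective model (the Schr\"{o}der--Bernstein-style analysis). With that identification of Galois-types and $pp$-types in hand, nothing further about the pure-injective structure is needed here.

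Next I would feed this directly into Theorem \ref{gstat}, whose sole hypothesis on $\K$ (beyond being an AEC of $R$-modules) is exactly that its Galois-types are $pp$-syntactic. That theorem then delivers $\lambda$-stability for every $\lambda \geq \LS(\K)$ satisfying $\lambda^{|R| + \aleph_0} = \lambda$. Since both cardinal conditions are precisely what we are assuming in the statement, the conclusion follows verbatim. Recall that the engine inside Theorem \ref{gstat} is a type-counting argument: one fixes $M \in \K_\lambda$, uses that there are at most $2^{|R|+\aleph_0}$ complete theories of modules to reduce to finitely representatively many elementary-equivalence classes, and then bounds $|\gS_{\K}(M)|$ by a sum of stability spectra of complete first-order theories of modules, each controlled by the classical Fisher--Baur bound.

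Because the two ingredients are already available, I do not expect a genuine obstacle at this level; the difficulty lives entirely in Lemma \ref{pp=gtp} and in Theorem \ref{gstat}. The only point requiring a moment of care is the consistency of the cardinal arithmetic: Theorem \ref{gstat} needs $\lambda \geq \LS(\K)$ and $\lambda^{|R|+\aleph_0}=\lambda$, both assumed here, and by Lemma \ref{stru1} we additionally have $|R| + \aleph_0 \leq \LS(\K)$, so there is no clash between the L\"{o}wenheim--Skolem number and the cardinal hypothesis. Hence Theorem \ref{stat1} is a direct corollary of Theorem \ref{gstat} and Lemma \ref{pp=gtp}.
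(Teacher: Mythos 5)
Your proposal is correct and matches the paper exactly: the paper derives Theorem \ref{stat1} precisely as an immediate consequence of Lemma \ref{pp=gtp} (Galois-types are $pp$-syntactic under Hypothesis \ref{hyp1}) combined with Theorem \ref{gstat}. The only minor quibble is that the engine of Lemma \ref{pp=gtp} is the uniqueness of pure-injective hulls over a set inside a pure-injective module (Ziegler) rather than the Schr\"{o}der--Bernstein fact, but this does not affect the argument for the statement at hand.
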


Then from \cite[3.20]{kuma} we can conclude the existence of universal models.

\begin{cor}\label{universal2} Assume $\K$ satisfies Hypothesis \ref{hyp1} and $\lambda \geq \LS(\K)$.
 If $\lambda^{|R| + \aleph_0}=\lambda$ or $\forall \mu < \lambda( \mu^{|R| + \aleph_0} <
\lambda)$, then $\K$ has a universal model of cardinality $\lambda$. 
\end{cor}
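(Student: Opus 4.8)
The plan is to reduce the statement to the general AEC machinery, since the genuinely analytic work is already done. First I would record the two ingredients in hand. By Lemma \ref{stru1}, $\K$ has joint embedding, amalgamation, and no maximal models, and by Corollary \ref{fullt} it is fully $(<\aleph_0)$-tame. By Theorem \ref{stat1}, $\K$ is $\mu$-stable for every $\mu \geq \LS(\K)$ with $\mu^{|R| + \aleph_0} = \mu$. These are exactly the structural and stability hypotheses under which \cite[3.20]{kuma} produces universal models, so the corollary follows by invoking that result once the relevant cardinal-arithmetic clause is matched to it. Below I indicate how each of the two clauses feeds the construction.

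In the case $\lambda^{|R| + \aleph_0} = \lambda$, Theorem \ref{stat1} gives $\lambda$-stability outright. Stability together with joint embedding, amalgamation, and no maximal models guarantees a $\lambda$-limit model, and a limit model in an AEC with joint embedding is universal in $\K_\lambda$, as recalled in the preliminaries. This settles the first clause immediately.

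The case $\forall \mu < \lambda(\mu^{|R| + \aleph_0} < \lambda)$ is the substantive one, and $\lambda$ itself need not satisfy the stability arithmetic. The key observation is that this hypothesis forces $\lambda > 2^{|R|+\aleph_0}$ and, more importantly, yields \emph{stability below} $\lambda$: re-running the counting from the proof of Theorem \ref{gstat} bounds $|\gS_{\K}(M)|$ by roughly $2^{|R|+\aleph_0}\cdot\mu^{|R|+\aleph_0}$ for $M$ of cardinality $\mu$, so $|\gS_{\K}(M)| < \lambda$ whenever $\mu < \lambda$. With this in hand I would build a continuous increasing chain $\langle M_i : i < \lambda\rangle$ of models of cardinality $<\lambda$ in which every Galois-type over $M_i$ is realized in $M_{i+1}$; the type count keeps each $M_{i+1}$ of size $<\lambda$, so $M=\bigcup_{i<\lambda}M_i$ has cardinality $\lambda$. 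A model realizing all Galois-types over its small submodels is universal in $\K_\lambda$ by a back-and-forth argument using amalgamation and tameness, which gives the conclusion.

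The main obstacle is precisely this second clause: one must extract stability below $\lambda$ from the cofinality hypothesis and then carry out the type-realization construction while verifying that the bookkeeping pins the final model at cardinality exactly $\lambda$ and that it is genuinely universal. This is exactly the content packaged by \cite[3.20]{kuma}, so in the end the corollary reduces to checking that $\K$ meets the hypotheses of that theorem, which Lemma \ref{stru1}, Corollary \ref{fullt}, and Theorem \ref{stat1} supply.
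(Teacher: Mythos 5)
Your proposal is correct and follows the paper's route exactly: the paper derives this corollary in one line by combining Theorem \ref{stat1} (stability at cardinals satisfying the arithmetic hypothesis) with the structural properties from Lemma \ref{stru1} and then citing \cite[3.20]{kuma}, which is precisely the reduction you carry out. Your additional unpacking of how the two cardinal-arithmetic clauses are handled inside \cite[3.20]{kuma} is accurate but not part of the paper's argument, which simply delegates that work to the cited result.
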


\subsection{Limit models and superstability} Since $K$ has joint embedding, amalgamation and no maximal models, it follows from \cite[\S II.1.16]{shelahaecbook} that $\K$ has a $(\lambda, \alpha)$-limit model if $\lambda^{|R| + \aleph_0}=\lambda$, $\lambda \geq \LS(\K)$ and $\alpha < \lambda^+$ is a limit ordinal. We characterize limit models with chains of big cofinality. This extends \cite[4.5]{kuma} and \cite[4.9]{mazf} to any class satisfying Hypothesis \ref{hyp1}.

\begin{lemma}\label{bigpi}
Assume $\K$ satisfies Hypothesis \ref{hyp1} and $\lambda\geq \LS(\K)^+$. If $M$ is a $(\lambda,
\alpha)$-limit model and $\cof(\alpha)\geq \rc$, then $M$ is
pure-injective.
\end{lemma}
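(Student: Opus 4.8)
The plan is to verify that $M$ is \emph{algebraically compact}, i.e. that every $pp$-type over $M$ (in finitely many variables) that is finitely satisfiable in $M$ is realized in $M$; by the standard characterization of pure-injectivity (see \cite[\S 2.2, \S 4.3]{prest}) this is equivalent to $M$ being pure-injective. Since $M \leq_p PE(M)$ and $PE(M)$ is pure-injective, any such type is realized in $PE(M)$, so it is contained in $\pp(b/M, PE(M))$ for some $b \in PE(M)$. Hence it suffices to show that for every $b \in PE(M)$ the full type $p := \pp(b/M, PE(M))$ is realized in $M$. Throughout I write $p \restriction M_i := \pp(b/M_i, PE(M))$, so that $p = \bigcup_{i<\alpha} p\restriction M_i$ is an increasing union and a realization of $p$ in $M$ is exactly an element of $\bigcap_{i<\alpha} S_i$, where $S_i := \{ c \in M : c \models p\restriction M_i \}$.

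First I would show each $S_i$ is nonempty by pushing a realization down to $M$ through the chain. Since $p\restriction M_i$ is finitely satisfiable in $M_i \leq_p PE(M_i)$ and $PE(M_i)$ is pure-injective, it is realized by some $b_i \in PE(M_i)$. By Hypothesis \ref{hyp1} we have $PE(M_i) \in K$, and $\|PE(M_i)\| \leq \lambda^{|R|+\aleph_0} = \lambda$, so $PE(M_i) \in \K_\lambda$; as $M_{i+1}$ is universal over $M_i$ there is a pure embedding $h_i : PE(M_i) \to M_{i+1}$ fixing $M_i$ pointwise. Because $h_i$ is pure over $M_i$ and $M_{i+1} \leq_p M$, the element $h_i(b_i) \in M_{i+1} \subseteq M$ realizes $p\restriction M_i$, so $h_i(b_i) \in S_i$. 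Since $p\restriction M_j \supseteq p\restriction M_i$ for $j \geq i$, the chain $\langle S_i : i<\alpha\rangle$ is decreasing, and each $S_i$ is either empty or a coset of the subgroup $H_i := \bigcap\{ \phi(M,\bar 0) : \phi(x,\bar m)\in p\restriction M_i \text{ for some } \bar m\in M_i\}$, using that a $pp$-formula $\phi(x,\bar m)$ defines a coset of the parameter-independent subgroup $\phi(M,\bar 0)$.

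The crux, and the only place the cofinality hypothesis is used, is to show this decreasing chain of nonempty cosets has nonempty intersection. The key observation is that the subgroup $\phi(M,\bar 0)$ depends only on the $pp$-formula $\phi(x,\bar y)$ and not on its parameters, and that there are at most $|R|+\aleph_0$ $pp$-formulas up to equivalence. Letting $A_i := \{\phi : \phi(x,\bar m)\in p\restriction M_i \text{ for some } \bar m \in M_i\}$, the sets $A_i$ form a non-decreasing $\alpha$-chain of subsets of a set of size $\leq |R|+\aleph_0$; since $\cof(\alpha)\geq \rc > |R|+\aleph_0$, this chain stabilizes at some $i^* < \alpha$, so $A_i = A_{i^*}$ and hence $H_i = H_{i^*} =: H$ for all $i \geq i^*$. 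For $i \geq i^*$ the sets $S_{i^*}$ and $S_i$ are then both cosets of the same subgroup $H$ with $S_i \subseteq S_{i^*}$, forcing $S_i = S_{i^*}$. Therefore any $c \in S_{i^*}$ (which is nonempty by the previous step) lies in $S_i$ for every $i < \alpha$, so $c$ realizes $p$ in $M$. This realizes every relevant $pp$-type in $M$, proving $M$ is algebraically compact and hence pure-injective. I expect the main obstacle to lie precisely in this last step: isolating the coset structure of $S_i$ and recognizing that only the finitely-many-modulo-$(|R|+\aleph_0)$ formula shapes govern the subgroups $H_i$, which is what converts the high-cofinality hypothesis into eventual constancy of the chain.
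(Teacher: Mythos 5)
Your proof is correct in its essentials but takes a genuinely different route from the paper's. The paper invokes the refined form of the algebraic-compactness criterion in \cite[2.8]{prest}: it suffices to realize every $M$-consistent $pp$-type over a parameter set $A$ of size at most $|R|+\aleph_0$. With that criterion the cofinality hypothesis is spent in one stroke --- $A$ sits inside a single $M_i$ --- and the whole proof is then essentially your first step: realize the type in $PE(M)$, cut down by downward L\"{o}wenheim--Skolem to some $N \in \K_\lambda$ containing $M_i$ and the realization, and map $N$ into $M$ over $M_i$ using universality of $M_{i+1}$. You instead work with complete $pp$-types over all of $M$, which forces the second half of your argument: the observation that the realization sets $S_i$ are cosets of subgroups $H_i$ governed only by the at most $|R|+\aleph_0$ many formula shapes, so that $\cof(\alpha) \geq \rc$ makes the chain $\langle H_i : i < \alpha \rangle$ eventually constant and the nested nonempty cosets eventually equal. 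That coset-stabilization argument is sound and is a self-contained substitute for the small-parameter-set version of the criterion; the trade-off is length and the reliance on the (true) structural facts about $pp$-definable cosets.

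One slip to repair: in showing $S_i \neq \emptyset$ you write $\|PE(M_i)\| \leq \lambda^{|R|+\aleph_0} = \lambda$, but the lemma does not assume $\lambda^{|R|+\aleph_0} = \lambda$, and without it $PE(M_i)$ need not lie in $\K_\lambda$, so universality of $M_{i+1}$ over $M_i$ cannot be applied to it directly. The fix is exactly the move the paper makes: $PE(M_i) \in K$ by Hypothesis \ref{hyp1}, so apply downward L\"{o}wenheim--Skolem to $M_i \cup \{b_i\}$ inside $PE(M_i)$ to obtain $N_i \in \K_\lambda$ with $M_i \leq_p N_i$ and $b_i \in N_i$, and then embed $N_i$ into $M_{i+1}$ over $M_i$. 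With that patch your argument goes through.
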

\begin{proof}
Fix $\{ M_i : i < \alpha\}$ a witness to the fact that $M$ is a
$(\lambda, \alpha)$-limit model. We show that every $p(x)$  $ M$-consistent $pp$-type over $A \subseteq M$ with $|A|
\leq |R| + \aleph_0$ is realized in $M$.\footnote{ For an incomplete
theory $T$ we say that a  pp-type $p(x)$ over $A \subseteq M$ is
$M$-consistent if it is realized in an elementary extension of $M$.}  This enough to show that $M$ is pure-injective by \cite[2.8]{prest}.

Observe that $p$ is a $PE(M)$-consistent $pp$-type as $M \preceq PE(M)$. Since $PE(M)$ is pure-injective, it is saturated for $pp$-types \cite[2.8]{prest}, so there is $a \in PE(M)$ realizing $p$. As  $\cof(\alpha)\geq \rc$, there is $i < \alpha$ such that $A \subseteq M_i$. Applying downward L\"{o}wenheim-Skolem to $M_i \cup \{ a \}$ in $PE(M)$ we obtain $N \in \K_\lambda$ with $M_i \leq_p N$ and $a \in N$. Then there is $f: N \xrightarrow[M_i]{} M$ because $M_{i + 1}$ is universal over $M_i$.  Hence $f(a) \in M$ realizes $p$.
\end{proof}

Since $K$ is closed under direct sums, the usual argument \cite[4.9]{kuma} can be use to characterize limit models of countable cofinality. 

\begin{lemma}\label{countablelim}  Assume $\K$ satisfies Hypothesis \ref{hyp1} and $\lambda\geq
\LS(\K)^+$. If $M$ is
a $(\lambda, \omega)$-limit model and  $N$ is a
$(\lambda, \rc)$-limit model, then $M$ is isomorphic to $N^{(\aleph_0)}$.
\end{lemma}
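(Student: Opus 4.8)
The plan is to reduce the statement to the single assertion that $N^{(\aleph_0)}$ is itself a $(\lambda,\omega)$-limit model. Once this is established, the conclusion is immediate: $M$ and $N^{(\aleph_0)}$ are then both $(\lambda,\omega)$-limit models, and it is a standard ZFC fact about AECs with amalgamation, joint embedding, and no maximal models (all available by Lemma \ref{stru1}) that any two limit models \emph{of the same length} are isomorphic, proved by a straightforward back-and-forth along the two $\omega$-chains using universality of the successor models. (This is the easy direction of limit-model comparison; no superstability is needed, since both chains have cofinality $\omega$.) So the whole burden is to exhibit a resolution of $N^{(\aleph_0)}$ witnessing that it is an $\omega$-limit model.

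First I would record the two properties of $N$ that drive the argument. Since $N$ is a $(\lambda,\rc)$-limit model and $\cof(\rc)=\rc\geq\rc$, Lemma \ref{bigpi} gives that $N$ is pure-injective. Since $\K$ has joint embedding (Lemma \ref{stru1}) and $N$ is a $\lambda$-limit model, $N$ is universal in $\K_\lambda$, i.e.\ every member of $\K_\lambda$ purely embeds into $N$. Now set $P_n := N^{(n+1)}$ for $n<\omega$, with the natural padding-by-zero inclusions $P_n\hookrightarrow P_{n+1}$. Each $P_n$ lies in $K$ by closure under direct sums, has cardinality $\lambda$, is pure-injective as a finite direct sum of pure-injective modules, each inclusion $P_n\leq_p P_{n+1}$ is pure (direct summands are pure), and $\bigcup_{n<\omega}P_n=N^{(\aleph_0)}$. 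There are no intermediate limit stages below $\omega$, so continuity is automatic.

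The key step is to show each $P_{n+1}=P_n\oplus N$ is universal over $P_n$. Let $N^*\in\K_\lambda$ with $P_n\leq_p N^*$. Because $P_n$ is pure-injective it is a direct summand, so $N^*=P_n\oplus C$ for some complement $C$, which is pure in $N^*$. The apparent obstacle is that $C$ need not belong to $K$ (Hypothesis \ref{hyp1} does not include closure under pure submodules), so one cannot directly invoke universality of $N$ on $C$. I would circumvent this by working with $N^*$ globally: since $N^*\in\K_\lambda$ and $N$ is universal in $\K_\lambda$, there is a pure embedding $g\colon N^*\to N$. Restricting, $g{\rest}C\colon C\to N$ is again a pure embedding, since $C\leq_p N^*$ gives $g[C]\leq_p g[N^*]\leq_p N$ and purity is transitive. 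Then
\[
f:=\id_{P_n}\oplus (g{\rest}C)\colon\ P_n\oplus C\ \longrightarrow\ P_n\oplus N=P_{n+1}
\]
is a pure embedding (a direct sum of pure embeddings is pure) and satisfies $f{\rest}P_n=\id_{P_n}$, so $f$ witnesses $P_{n+1}$ universal over $P_n$. This resolves the main difficulty; the rest is routine.

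Assembling these, the chain $\langle P_n : n<\omega\rangle$ witnesses that $N^{(\aleph_0)}\in\K_\lambda$ is a $(\lambda,\omega)$-limit model, and by the uniqueness of $\omega$-limit models noted above we conclude $M\cong N^{(\aleph_0)}$. I expect the only genuinely delicate point to be the summand-complement issue in the universality step, which is handled by restricting a globally obtained pure embedding rather than requiring the complement to lie in $K$; the verifications that finite sums of pure-injectives are pure-injective and that direct sums and restrictions of pure embeddings remain pure are routine.
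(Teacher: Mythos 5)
Your proof is correct and is essentially the argument the paper invokes (the ``usual argument'' of \cite[4.9]{kuma}): realize $N^{(\aleph_0)}$ as the union of the chain $\langle N^{(n+1)} : n<\omega\rangle$, use pure-injectivity of $N$ (via Lemma \ref{bigpi}) to split $P_n$ off any pure extension and universality of $N$ in $\K_\lambda$ to embed the rest, and then conclude by uniqueness of limit models of the same cofinality. Your extra step of embedding $N^*$ globally into $N$ and restricting to the complement $C$, rather than applying universality of $N$ to $C$ directly, is precisely the adjustment needed under Hypothesis \ref{hyp1}, which does not guarantee that the complement $C$ lies in $K$.
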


Moreover, any two limit models of $\K$ are elementarily equivalent. The proof is similar to that of  \cite[4.3]{kuma} so we omit it. 

\begin{lemma}\label{limeq} Assume $\K$ satisfies Hypothesis \ref{hyp1}.
If $M, N$ are limit models of $\K$, then $M$ and $N$ are elementary equivalent.
\end{lemma}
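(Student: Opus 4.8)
The plan is to compute the Baur-Monk invariants of a limit model and to show that they depend only on $K$, not on the particular limit model chosen. Recall that two $R$-modules are elementarily equivalent if and only if they assign the same value in $\{1, 2, 3, \dots\} \cup \{\infty\}$ to each \emph{$pp$-pair} $\psi \leq \phi$, namely the index $[\phi(M):\psi(M)]$ of the corresponding $pp$-definable subgroups (two infinite indices being identified); this is the classification of elementary equivalence of modules via invariants (see \cite[\S 2.4]{prest}). Hence it suffices to prove that for every $pp$-pair $\psi \leq \phi$ and every limit model $M$ of $\K$, the index $[\phi(M):\psi(M)]$ is a value determined by $K$ alone.

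Two facts drive the computation. First, if $A \leq_p B$ then, since pure embeddings preserve and reflect $pp$-formulas, the inclusion induces an injection $\phi(A)/\psi(A) \hookrightarrow \phi(B)/\psi(B)$, so $[\phi(A):\psi(A)] \leq [\phi(B):\psi(B)]$. Second, if $M$ is a limit model then $\|M\| = \lambda$ for some $\lambda \geq \LS(\K)$, and since $\K$ has joint embedding by Lemma \ref{stru1}, $M$ is universal in $\K_\lambda$; thus every $B \in \K_\lambda$ embeds purely into $M$, whence $[\phi(B):\psi(B)] \leq [\phi(M):\psi(M)]$ for all such $B$. Combined with $M \in \K_\lambda$, this shows $[\phi(M):\psi(M)] = \sup\{ [\phi(B):\psi(B)] : B \in \K_\lambda\}$.

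I then establish the following dichotomy for a fixed $pp$-pair. If $[\phi(A):\psi(A)] = 1$ for every $A \in K$, then in particular $[\phi(M):\psi(M)] = 1$. Otherwise some $A \in K$ admits a tuple $\bar{a} \in \phi(A) \setminus \psi(A)$; applying the downward L\"{o}wenheim-Skolem property to $\bar{a}$ yields $A_0 \leq_p A$ with $\|A_0\| \leq \LS(\K) \leq \lambda$, and purity gives $\bar{a} \in \phi(A_0) \setminus \psi(A_0)$, so $[\phi(A_0):\psi(A_0)] > 1$. By closure under direct sums, $A_0^{(\lambda)} \in K$ has cardinality $\lambda$, hence lies in $\K_\lambda$; since $pp$-formulas commute with direct sums, $\phi(A_0^{(\lambda)})/\psi(A_0^{(\lambda)}) \cong \bigoplus_{\lambda}(\phi(A_0)/\psi(A_0))$ is infinite, so $[\phi(A_0^{(\lambda)}):\psi(A_0^{(\lambda)})] = \infty$. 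As $A_0^{(\lambda)}$ embeds purely into $M$ by universality, $[\phi(M):\psi(M)] = \infty$. Thus every limit model assigns $1$ or $\infty$ to each $pp$-pair according to a criterion phrased entirely in terms of $K$, so any two limit models have identical invariants and are elementarily equivalent.

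The main point requiring care is that this criterion is genuinely independent of the cardinality and of the cofinality of the limit model: the passage from $A_0$ to $A_0^{(\lambda)}$ must remain in $K$ (direct sums), have cardinality exactly $\lambda$, and embed purely into $M$ (universality), and it is precisely this inflation that forces every nontrivial invariant up to $\infty$, ruling out any finite value strictly between $1$ and $\infty$. Once that is verified, the conclusion is uniform over all limit models irrespective of their length.
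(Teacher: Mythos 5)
Your proof is correct and is essentially the argument the paper has in mind: the paper omits the proof with a pointer to \cite[4.3]{kuma}, which is exactly this Baur--Monk invariant computation, using universality of limit models in their own cardinality together with closure under direct sums to force every invariant of a limit model to be $1$ or $\infty$ according to a criterion depending only on $K$. The one point worth noting is that your inflation step needs arbitrary (not just finite) direct sums to stay in $K$, which holds here since Hypothesis \ref{hyp1} asserts closure under direct sums (and in any case follows from finite direct sums plus closure under directed unions of pure embeddings).
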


\begin{remark} Lemma \ref{bigpi} and Lemma \ref{countablelim} describe limit models with \emph{long} chains and the limit model with  \emph{the  shortest} chain under Hypothesis \ref{hyp1}. We do not know how the other limit models look like except from the fact that they are elementarily equivalent to the ones we understand by Lemma \ref{limeq}. Since this is all we need to characterize superstability, we do not explore this any further in this paper. Nevertheless, we think that understanding the other limit models could help better understand the classes satisfying Hypothesis \ref{hyp1}.

\end{remark}

Due to Lemma \ref{limeq}, it makes sense to introduce the following first-order theory:

\begin{nota}\label{notat}
For $\K$ satisfying Hypothesis \ref{hyp1}, let $\tilde{M}_\K$ be the $(2^{\LS(\K)}, \omega)$-limit model of $\K$ and $\tilde{T}_\K=Th(\tilde{M}_\K)$.
\end{nota}

In \cite[\S 4.1]{maz1} a similar theory, called $\tilde{T}$ there, was introduced.  There it was shown that there was a very close relation between the AEC $\K$ and $\tilde{T}_\K$. We do not think that this is the case when $\K$ satisfies Hypothesis \ref{hyp1} and  is not first-order axiomatizable. We think that this is the case because there can be models of $\tilde{T}_\K$ that are not in $\K$. Nevertheless, stability transfers from $\tilde{T}_{\K}$  to $\K$. As the proof is similar to that of \cite[4.9]{maz1} we omit it.

\begin{lemma}\label{implyst}
Assume $\K$ satisfies Hypothesis \ref{hyp1}  and let $\lambda \geq \LS(\K)$. If $\tilde{T}_{\K}$ is $\lambda$-stable, then $\K$ is $\lambda$-stable.
\end{lemma}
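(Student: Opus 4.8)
The plan is to run the counting argument of Theorem \ref{gstat}, but with the single complete theory $\tilde{T}_{\K}$ in place of the family $\{Th(N_j)\}$. Fix $M \in \K_{\lambda}$. Since $\gS_{\K}(M) = \gS^{1}_{\K}(M)$ and the Galois-types in $\K$ are $pp$-syntactic (Lemma \ref{pp=gtp}), the assignment $\gtp_{\K}(a/M; N) \mapsto \pp(a/M, N)$ is a well-defined injection of $\gS_{\K}(M)$ into the set $Q$ of $pp$-$1$-types over $M$ realized in some pure extension $M \leq_{p} N \in K$. First I would note that every $q \in Q$ is finitely satisfiable in $M$: if $a \in N$ realizes $q$ and $\phi(x,\bar{m})$ is a finite conjunction of members of $q$ (hence itself a $pp$-formula with $\bar{m} \in M$), then $N \models \exists x\,\phi(x,\bar{m})$, and purity of $M \leq_{p} N$ gives $M \models \exists x\,\phi(x,\bar{m})$. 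So it suffices to bound $|Q|$ by $\lambda$.

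The central step is to realize every $q \in Q$ inside one pure-injective model of $\tilde{T}_{\K}$ that purely contains $M$. To build it, pick a cardinal $\mu$ with $\mu^{|R| + \aleph_0} = \mu$ and $\mu \geq \LS(\K)^{+}$ (for instance $\mu = 2^{\lambda}$, using $|R| + \aleph_0 \leq \LS(\K) \leq \lambda$), and let $\hat{M}$ be a $(\mu, \rc)$-limit model; such a model exists in this cardinal by the discussion preceding Lemma \ref{bigpi}. By Lemma \ref{bigpi} the module $\hat{M}$ is pure-injective, and by Lemma \ref{limeq} it is elementarily equivalent to $\tilde{M}_{\K}$, so $\hat{M} \models \tilde{T}_{\K}$. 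Because $\K$ has joint embedding, $\hat{M}$ is universal in $\K_{\mu}$; extending $M$ to some $M' = M \oplus E \in \K_{\mu}$ (closure under direct sums together with the existence of arbitrarily large models) and composing with a $\K$-embedding $M' \to \hat{M}$ produces a pure embedding, so we may assume $M \leq_{p} \hat{M}$. Each $q \in Q$ is finitely satisfiable in $M$, hence in $\hat{M}$; since $\hat{M}$ is pure-injective it is saturated for $pp$-types (\cite[2.8]{prest}), so $q$ is realized in $\hat{M}$. Thus $Q \subseteq \{\,\pp(c/M, \hat{M}) : c \in \hat{M}\,\}$.

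Finally I would count that last set exactly as in Theorem \ref{gstat}. Viewing $M$ as a subset of the model $\hat{M} \models \tilde{T}_{\K}$, $pp$-quantifier elimination for the theory of modules shows that two elements of $\hat{M}$ with the same $pp$-type over $M$ have the same complete $\tilde{T}_{\K}$-type over $M$, so $c \mapsto \mathrm{tp}^{\tilde{T}_{\K}}(c/M, \hat{M})$ induces an injection of $\{\,\pp(c/M, \hat{M}) : c \in \hat{M}\,\}$ into $S^{\tilde{T}_{\K}}(M)$. As $\tilde{T}_{\K}$ is $\lambda$-stable and $|M| = \lambda$, we get $|S^{\tilde{T}_{\K}}(M)| \leq \lambda$, whence $|\gS_{\K}(M)| \leq |Q| \leq \lambda$ and $\K$ is $\lambda$-stable. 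The main obstacle is precisely the central step: one must place $M$ purely inside a pure-injective model of $\tilde{T}_{\K}$ without invoking $\lambda$-stability of $\K$ (which is the conclusion), since limit models of $\K$ in cardinality $\lambda$ are not available a priori. This is circumvented by working at the auxiliary cardinal $\mu$ with $\mu^{|R| + \aleph_0} = \mu$, where limit models exist unconditionally, using a chain of cofinality $\geq \rc$ so that Lemma \ref{bigpi} yields pure-injectivity and Lemma \ref{limeq} identifies the theory as $\tilde{T}_{\K}$.
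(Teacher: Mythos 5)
Your overall skeleton is the right one (reduce to counting $pp$-types via Lemma \ref{pp=gtp}, place $M$ purely inside a large pure-injective limit model $\hat{M}\models\tilde{T}_{\K}$ built at an auxiliary cardinal $\mu$ with $\mu^{|R|+\aleph_0}=\mu$ so as to avoid circularity, then count using first-order $\lambda$-stability of $\tilde{T}_{\K}$ together with $pp$-quantifier elimination). But the central step has a genuine gap. When you realize $q=\pp(a/M,N)$ in $\hat{M}$ by $pp$-saturation, the realization $c$ only gives $q\subseteq\pp(c/M,\hat{M})$; there is no reason for equality, since $c$ may satisfy additional $pp$-formulas over $M$ that $a$ does not satisfy in $N$ (already in $\mathbb{Z}$ the element $2$ realizes $\pp(1/\emptyset,\mathbb{Z})$ but has a strictly larger $pp$-type). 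Hence the claimed inclusion $Q\subseteq\{\pp(c/M,\hat{M}):c\in\hat{M}\}$ does not follow, and the induced assignment $q\mapsto\pp(c_q/M,\hat{M})$ need not be injective: two distinct $q_1\subsetneq q_2$ in $Q$ can be realized by elements of $\hat{M}$ with the same full $pp$-type. Nor can you argue that $q$ is already a complete $pp$-type relative to $\tilde{T}_{\K}$, since $N$ need not be elementarily equivalent to $\hat{M}$. This is exactly the point where merely realizing a type is too weak and one must transfer the whole pair $(N,a)$.

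The repair, which is the route of the proof the paper points to (\cite[4.9]{maz1}), is to purely embed $N$ itself into $\hat{M}$ over $M$, because pure embeddings preserve $pp$-types exactly. Concretely: cut $N$ down so that $\|N\|\leq\mu$, fix $M'\in\K_\mu$ with $M\leq_p M'$, and take $\hat{M}$ to be a $(\mu,\rc)$-limit model \emph{over} $M'$. Then $\hat{M}$ is universal over $M'$ (already its first successor in the witnessing chain is), so by amalgamation (Lemma \ref{stru1}) any $N\geq_p M$ of size at most $\mu$ amalgamates with $M'$ over $M$ and the resulting model embeds into $\hat{M}$ over $M'$; the composite $g:N\to\hat{M}$ is a pure embedding fixing $M$ with $\pp(a/M,N)=\pp(g(a)/M,\hat{M})$. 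Your final counting step (same $pp$-type over $M$ implies same complete $\tilde{T}_{\K}$-type over $M$, and $|S^{\tilde{T}_{\K}}(M)|\leq\lambda$) then goes through verbatim. Note that knowing only that $\hat{M}$ is universal in $\K_\mu$, as you use, does not give embeddings of extensions of $M$ into $\hat{M}$ \emph{over} the fixed copy of $M$; that is why the limit-model-over-a-base structure plus amalgamation is needed, and why $pp$-saturation cannot substitute for it.
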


\begin{remark} In \cite[4.9]{maz1}, it is shown that the converse is true if $\K$ is first-order axiomatizable.  We do not think that the converse is true in this more general setting, but we do not have a counterexample. 
\end{remark}


We characterize superstability for classes satisfying Hypothesis \ref{hyp1}. The next result extends \cite[4.26]{maz1} to classes not necessarily axiomatizable by a first-order theory and \cite[4.12]{mazf} to a different class than that of Example 3.2.(5).

\begin{theorem}\label{main} Assume $\K$ satisfies Hypothesis \ref{hyp1}. The following are equivalent.
\begin{enumerate}
\item  $\K$ is superstable.
\item There is a $\lambda \geq \LS(\K)^+$ such that $\K$ has uniqueness of limit models of cardinality $\lambda$.
\item Every limit model in $\K$ is $\Sigma$-pure-injective. 
\item Every model in $\K$ is pure-injective.
\item For every $\lambda \geq \LS(\K)$, $\K$ has uniqueness of limit models of cardinality $\lambda$.

\end{enumerate}
\end{theorem}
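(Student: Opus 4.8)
The plan is to prove the cycle $(1) \Rightarrow (2) \Rightarrow (3) \Rightarrow (4) \Rightarrow (5) \Rightarrow (1)$. Two links are immediate from the definitions: $(1) \Rightarrow (2)$ holds because superstability is uniqueness of limit models on a tail of cardinals, so one picks any $\lambda \geq \LS(\K)^+$ in that tail (taking it with $\lambda^{|R| + \aleph_0} = \lambda$, so that limit models of every cofinality exist); and $(5) \Rightarrow (1)$ holds because uniqueness at \emph{every} $\lambda \geq \LS(\K)$ is in particular uniqueness on a tail. All the work is in the three middle implications, and the engine throughout is the description of the long-chain and short-chain limit models (Lemma \ref{bigpi} and Lemma \ref{countablelim}), the elementary equivalence of all limit models (Lemma \ref{limeq}), and the closure properties of $\Sigma$-pure-injectivity (Fact \ref{spi}).

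For $(2) \Rightarrow (3)$ I would fix a witnessing $\lambda$ and consider the $(\lambda, \omega)$-limit model $M$ and the $(\lambda, \rc)$-limit model $N$ (note $\rc \leq \LS(\K)^+ \leq \lambda$, so both exist). By Lemma \ref{bigpi} the model $N$ is pure-injective, and by Lemma \ref{countablelim} we have $M \cong N^{(\aleph_0)}$. The decisive point is that uniqueness of limit models forces $M \cong N$, hence $N \cong N^{(\aleph_0)}$; since $N$ is pure-injective this says that $N^{(\aleph_0)}$ is pure-injective, i.e. $N$ is $\Sigma$-pure-injective. As every limit model of $\K$ is elementarily equivalent to $N$ by Lemma \ref{limeq}, and $\Sigma$-pure-injectivity is preserved under elementary equivalence (Fact \ref{spi}), every limit model is $\Sigma$-pure-injective.

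For $(3) \Rightarrow (4)$, given an arbitrary $M \in \K$ I would pick $\lambda \geq \|M\| + \LS(\K)$ with $\lambda^{|R| + \aleph_0} = \lambda$, take the limit model $N \in \K_\lambda$ (which is $\Sigma$-pure-injective by hypothesis), use closure under direct sums to enlarge $M$ to some $M' \in \K_\lambda$ with $M \leq_p M'$, and then purely embed $M'$ into $N$ by universality of limit models. Thus $M$ is isomorphic to a pure submodule of the $\Sigma$-pure-injective module $N$, so $M$ is $\Sigma$-pure-injective and a fortiori pure-injective by Fact \ref{spi}.

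The hard part will be $(4) \Rightarrow (5)$, where from pure-injectivity of every model I must extract both existence and uniqueness of limit models at every $\lambda \geq \LS(\K)$, including cardinals with $\lambda^{|R| + \aleph_0} \neq \lambda$. Uniqueness is the soft half: any two $\lambda$-limit models are universal and, by (4), pure-injective, so the Schr\"{o}der--Bernstein property (Fact \ref{ipi}) makes them isomorphic. Existence forces me to upgrade (4) to full stability, and the obstacle is exactly removing the fixed-point restriction on $\lambda$. To do this I would apply (4) to the distinguished limit model $\tilde{M}_\K$ of Notation \ref{notat}: writing $\tilde{M}_\K \cong N^{(\aleph_0)}$ via Lemma \ref{countablelim} and using that $\tilde{M}_\K$ is pure-injective, I get that $N^{(\aleph_0)}$ is pure-injective, whence $N$, and therefore $\tilde{M}_\K$ itself, is $\Sigma$-pure-injective. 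Then $\tilk = Th(\tilde{M}_\K)$ is the theory of a $\Sigma$-pure-injective module, so it is $\lambda$-stable for all $\lambda \geq |R| + \aleph_0$ by Fact \ref{spi}; since $\LS(\K) \geq |R| + \aleph_0$ by Lemma \ref{stru1}, Lemma \ref{implyst} transfers this to give that $\K$ is $\lambda$-stable for every $\lambda \geq \LS(\K)$. Finally, stability together with amalgamation, joint embedding, and no maximal models yields $(\lambda, \alpha)$-limit models for all limit $\alpha < \lambda^+$ and all $\lambda \geq \LS(\K)$ by the standard existence theorem, which combined with the Schr\"{o}der--Bernstein uniqueness argument establishes (5).
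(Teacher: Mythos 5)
Your proposal is correct and follows essentially the same route as the paper: the same cycle $(1)\Rightarrow(2)\Rightarrow(3)\Rightarrow(4)\Rightarrow(5)\Rightarrow(1)$, with Lemmas \ref{bigpi}, \ref{countablelim}, \ref{limeq}, \ref{implyst} and Facts \ref{spi}, \ref{ipi} deployed at the same points. The only cosmetic differences are that you inline the universality/direct-sum embedding argument in $(3)\Rightarrow(4)$ where the paper cites an external lemma, and in $(4)\Rightarrow(5)$ you obtain $\Sigma$-pure-injectivity of $\tilde{M}_\K$ via Lemma \ref{countablelim} rather than directly from closure under direct sums.
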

\begin{proof}
(1) $\Rightarrow$ (2) Clear.

(2) $\Rightarrow$ (3)  Let $\lambda \geq \LS(\K)^+$ such that $\K$ has uniqueness of limit models of size $\lambda$. Let $M$ be a $(\lambda, \rc)$-limit model in $\K$. It follows from Lemma \ref{countablelim} that $M^{(\aleph_0)}$ is the $(\lambda, \omega)$-limit model. As $\K$ has uniqueness of limit models of size $\lambda$, we have that $M$ is isomorphic to $M^{(\aleph_0)}$. Since $M$  is pure-injective by Lemma \ref{bigpi}, it follows that $M^{(\aleph_0)}$ is pure-injective. Hence $M$ is $\Sigma$-pure-injective. Since limit models are elementarily equivalent by Lemma \ref{limeq} and $\Sigma$-pure-injectivity is preserved under elementarily equivalence by Fact \ref{spi}, it follows that every limit model is $\Sigma$-pure-injective.

(3) $\Rightarrow$ (4)
Let $N\in \K$ and $N'$ be a $(\|N\|^{|R| + \aleph_0}, \omega)$-limit model, this exists by Theorem \ref{stat1}.  Then there is $f: N \to N'$ a pure embedding by \cite[2.10]{maz}. Since $N'$ is $\Sigma$-pure-injective and $f$ is a pure embedding, it follows from Fact \ref{spi} that $N$ is $\Sigma$-pure-injective. Hence every model in $\K$ is pure-injective.

(4) $\Rightarrow$ (5)   Let $M$ be a $(2^{\LS(\K)}, \omega)$-limit model. By (4) and closure under direct sums we have that $M$ is $\Sigma$-pure-injective, so $Th(M)$  is $\lambda$-stable for every $\lambda \geq  |R| + \aleph_0$ by Fact \ref{spi}.  As $Th(M)=\tilk$ by definition, it follows from Lemma \ref{implyst} that $\K$ is $\lambda$-stable for every $\lambda \geq  \LS(\K)$. Therefore, by \cite[\S II.1.16]{shelahaecbook} there exist a $\lambda$-limit model for every  $\lambda \geq  \LS(\K)$.

Regarding uniqueness, observe that given $M$ and $N$ $\lambda$-limit models, there are $f: M \to N$ and $g: N \to M$ pure embeddings by \cite[2.10]{maz}. Since we have that $M$ and $N$ are pure-injective, it follows from Fact \ref{ipi} that $M$ and $N$ are isomorphic.

(5) $\Rightarrow$ (1) Clear. \end{proof}

\begin{remark} It can also be shown as in \cite[4.26]{maz1} that $\K$ is superstable if and only if there exists $\lambda \geq \LS(\K)^+$ such that $\K$ has a $\Sigma$-pure-injective universal model of cardinality $\lambda$.
\end{remark}

\subsection{Characterizing several classes of rings} We will use the results of the preceding subsection to characterize noetherian rings, pure-semimple rings, Dedekind domains, and fields via superstability.

Recall that a module $M$ is injective if it is a direct summand of every module containing it. The next result will be useful.

\begin{fact}[{\cite[4.4.17]{prest09}}]\label{eno} Let $R$ be a ring. The following are equivalent.
\begin{enumerate}
\item $R$ is left noetherian.
\item The class of absolutely pure left $R$-modules is the same as the class of injective left $R$-modules.
\item Every direct sum of injective left $R$-modules is injective.
\end{enumerate}
\end{fact}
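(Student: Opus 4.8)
The plan is to prove the cyclic chain of implications $(1) \Rightarrow (2) \Rightarrow (3) \Rightarrow (1)$, leaning throughout on Baer's criterion for injectivity and on the standard reformulation of absolute purity: a module $M$ is pure in every extension if and only if $\mathrm{Ext}^1_R(F, M) = 0$ for every finitely presented left $R$-module $F$. I will also use at the outset that every injective module is absolutely pure, which is immediate from the definition given in Example \ref{ex1}: an injective module is a direct summand of any module containing it, and a direct summand is always a pure submodule.

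First I would treat $(1) \Rightarrow (2)$. The inclusion of injectives into absolutely pure modules holds with no hypothesis on $R$, so only the converse uses noetherianity. Given an absolutely pure $M$ and a left ideal $I \leq R$, the noetherian hypothesis makes $I$ finitely generated, hence $R/I$ finitely presented; then $\mathrm{Ext}^1_R(R/I, M) = 0$ says exactly that every map $I \to M$ extends to $R$, and Baer's criterion yields injectivity of $M$.

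For $(2) \Rightarrow (3)$ I would first record that the class of absolutely pure modules is closed under arbitrary direct sums: a finitely presented $F$ has a projective resolution by finitely generated projectives in degrees $0$ and $1$, so $\mathrm{Ext}^1_R(F, -)$ commutes with direct sums, giving $\mathrm{Ext}^1_R(F, \bigoplus_i M_i) \cong \bigoplus_i \mathrm{Ext}^1_R(F, M_i)$. This closure holds unconditionally, so under the hypothesis $(2)$ it transfers verbatim to the injective modules, which is precisely statement $(3)$.

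The hard part will be $(3) \Rightarrow (1)$, and I expect this Bass--Papp step to be the main obstacle since the other implications are essentially formal consequences of the $\mathrm{Ext}$-characterization. The idea is to argue by contradiction: assuming a strictly ascending chain $I_0 \subsetneq I_1 \subsetneq \cdots$ of left ideals with union $I$, I would set $E_n$ to be the injective envelope of $R/I_n$ and form $E = \bigoplus_n E_n$, which is injective by $(3)$. The key construction is the map $g : I \to E$ sending $r \mapsto (r + I_n)_n$; it lands in the direct sum because each element of $I$ lies in $I_n$ for all large $n$. Extending $g$ to $\tilde g : R \to E$ by injectivity, the element $\tilde g(1)$ has finite support, say below an index $N$; reading off coordinates $n \geq N$ in the identity $\tilde g(r) = r\,\tilde g(1)$ forces $r \in I_n$ for all $r \in I$, so $I \subseteq I_N$ and the chain stabilizes, contradicting strictness. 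This closes the cycle and establishes all three equivalences.
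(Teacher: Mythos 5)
Your proposal is a correct and complete proof, but note that the paper itself offers no proof of this statement: it is stated as a \emph{Fact} and attributed to Prest's book (\cite[4.4.17]{prest09}), so there is nothing internal to compare against. What you have written is essentially the classical textbook argument: the identification of absolutely pure modules with FP-injective ones (vanishing of $\mathrm{Ext}^1_R(F,-)$ on finitely presented $F$) makes $(1)\Rightarrow(2)$ a direct application of Baer's criterion, $(2)\Rightarrow(3)$ follows from the unconditional closure of absolutely pure modules under direct sums (your observation that $\mathrm{Ext}^1_R(F,-)$ commutes with direct sums for finitely presented $F$ is the right one --- the point being that the first syzygy of $F$ is finitely generated, so both $\mathrm{Hom}$ groups in the presentation of $\mathrm{Ext}^1$ commute with direct sums), and $(3)\Rightarrow(1)$ is the Bass--Papp argument with the map $r\mapsto(r+I_n)_n$ into $\bigoplus_n E(R/I_n)$ and the finite-support constraint on $\tilde g(1)$. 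All three steps are sound, including the use of $\tilde g(r)=r\,\tilde g(1)$ for a left module homomorphism out of $R$. The only caveat is that you import without proof the equivalence of absolute purity with FP-injectivity; that is itself a nontrivial (though standard) lemma, so a fully self-contained write-up would need to include it or cite it explicitly.
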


We begin by giving two new characterizations of noetherian rings. The equivalence between (1) and (2) extends \cite[4.30]{maz1}. Recall that $R\text{-AbsP}$ is the class of absolutely pure $R$-modules and that  $R\text{-l-inj}$ is the class of locally injective $R$-modules, these were introduced in Example \ref{ex1}.

\begin{theorem}\label{absss}
Let $R$ be a ring. The following are equivalent.

\begin{enumerate}
\item $R$ is left noetherian.
\item $(R\text{-AbsP}, \leq_{p})$ is superstable.
\item $(R\text{-l-inj}, \leq_p)$ is superstable.
\end{enumerate}

\end{theorem}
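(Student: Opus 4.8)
The plan is to route everything through Theorem \ref{main}, which applies because both $(R\text{-AbsP}, \leq_p)$ and $(R\text{-l-inj}, \leq_p)$ satisfy Hypothesis \ref{hyp1} by Example \ref{ex1}.(1) and Example \ref{ex1}.(2) respectively. Using the equivalence of (1) and (4) in that theorem, $(R\text{-AbsP}, \leq_p)$ is superstable if and only if every absolutely pure module is pure-injective, and $(R\text{-l-inj}, \leq_p)$ is superstable if and only if every locally injective module is pure-injective. So the whole problem reduces to translating these pure-injectivity statements into the ring-theoretic conditions of Fact \ref{eno}.

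The key observation I would isolate first is that an \emph{absolutely pure and pure-injective} module is injective: if $M$ is absolutely pure then $M \leq_p N$ for every $N \supseteq M$, and if $M$ is also pure-injective then each such pure inclusion splits, so $M$ is a direct summand of every module containing it, i.e. $M$ is injective. Conversely every injective module is both absolutely pure and pure-injective. Hence, for the class of absolutely pure modules, ``every model is pure-injective'' is equivalent to ``every absolutely pure module is injective,'' which by Fact \ref{eno}.(2) is exactly the assertion that $R$ is left noetherian. This yields the equivalence (1) $\Leftrightarrow$ (2).

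For (3), I would first recall from Example \ref{ex1}.(2) that every locally injective module is absolutely pure (via \cite[3.1]{rara}), so the same observation shows that a locally injective pure-injective module is injective; thus (3) holds if and only if every locally injective module is injective. For (1) $\Rightarrow$ (3): if $R$ is left noetherian then by Fact \ref{eno}.(2) the absolutely pure modules coincide with the injective ones, and since the locally injective modules form a subclass of the absolutely pure modules they are all injective, hence pure-injective. For (3) $\Rightarrow$ (1): I would show that any direct sum $\bigoplus_{i} E_i$ of injective modules is locally injective, since any finite tuple lands in a finite subsum $\bigoplus_{i \in F} E_i$, which is injective (a finite direct sum of injectives) and is a direct summand, hence a submodule, of $\bigoplus_{i} E_i$. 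So if every locally injective module is injective, then every direct sum of injective modules is injective, and Fact \ref{eno}.(3) gives that $R$ is left noetherian.

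The arguments are short once Theorem \ref{main} is available; the only point requiring care—and the main (small) obstacle—is the upgrade from ``pure-injective'' to ``injective,'' where it is essential that absolute purity is precisely what turns a splitting of pure inclusions into the full injectivity needed to invoke Fact \ref{eno}. Everything else is bookkeeping with the containments injective $\subseteq$ locally injective $\subseteq$ absolutely pure.
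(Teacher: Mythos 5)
Your proposal is correct and follows essentially the same route as the paper: reduce superstability to ``every model is pure-injective'' via Theorem \ref{main}, use the containments injective $\subseteq$ locally injective $\subseteq$ absolutely pure together with the observation that absolutely pure plus pure-injective implies injective, and close the loop with Fact \ref{eno}. The only differences are organizational (you prove $(1)\Leftrightarrow(2)$ and $(1)\Leftrightarrow(3)$ separately rather than the cycle $(1)\Rightarrow(2)\Rightarrow(3)\Rightarrow(1)$, and you spell out explicitly that direct sums of injectives are locally injective), which the paper leaves implicit.
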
 
\begin{proof}
Recall that absolutely pure modules and locally injective modules satisfy Hypothesis \ref{hyp1}, so we can use the results from the previous subsection. More  precisely, we use Theorem \ref{main}.(4) to show the equivalences.

(1) $\Rightarrow$ (2) If $R$ is noetherian, then every absolutely pure module is injective by Fact \ref{eno}. Hence, every absolutely pure module is pure-injective. So the result follows from Theorem \ref{main}.

(2) $\Rightarrow$ (3) Every locally injective module is absolutely pure by \cite[3.1]{rara}. Then it follows that every locally injective module is pure-injective by (2). Hence, the class of locally injective $R$-modules is superstable. 

(3) $\Rightarrow$ (1) We show that the direct sum of injective modules is injective, this is enough by Fact \ref{eno}. Let $\{ M_i : i \in I \}$ be a family of injective modules. As they are all locally injective, we have that $\bigoplus_{i \in I} M_i$ is locally injective. Moreover, as $(R\text{-l-inj}, \leq_p)$  is superstable, we have that  $\bigoplus_{i \in I} M_i$ is also pure-injective by Theorem \ref{main}. Recall that locally injective modules are absolutely pure, so $\bigoplus_{i \in I} M_i$ is absolutely pure and pure-injective. Therefore,  $\oplus_{i \in I} M_i$ is injective. Hence $R$ is noetherian. \end{proof} 

We use the above result to study the class of injective $R$-modules with pure embeddings, we will denote it by $(R\text{-Inj}, \leq_{p})$.

\begin{cor}\label{inj}
Let $R$ be a ring. $(R\text{-Inj}, \leq_{p})$ is an AEC if and only if $R$ is left noetherian.  Moreover, if $R$ is left noetherian, then $(R\text{-Inj}, \leq_{p})$ is a superstable AEC.
\end{cor}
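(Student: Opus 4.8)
The plan is to reduce the whole statement to an identity of two classes of modules via Fact \ref{eno}. The starting observation is that every injective module is absolutely pure (an injective module is a direct summand, hence pure, in every module containing it), so $R\text{-Inj} \subseteq R\text{-AbsP}$ unconditionally; and by Fact \ref{eno} the reverse inclusion $R\text{-AbsP} \subseteq R\text{-Inj}$ holds precisely when $R$ is left noetherian. Thus $R$ is left noetherian if and only if $R\text{-Inj} = R\text{-AbsP}$ as classes. Both directions of the corollary will be obtained from this equivalence together with the results already proved for $R\text{-AbsP}$.

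For the direction assuming $R$ is left noetherian, I would argue that by the equivalence above $R\text{-Inj}$ and $R\text{-AbsP}$ are literally the same class, so $(R\text{-Inj}, \leq_p)=(R\text{-AbsP}, \leq_p)$. Since $(R\text{-AbsP}, \leq_p)$ is an AEC (Example \ref{ex1}), so is $(R\text{-Inj}, \leq_p)$; and since $R$ is noetherian, $(R\text{-AbsP}, \leq_p)$ is superstable by Theorem \ref{absss}, whence $(R\text{-Inj}, \leq_p)$ is superstable. This settles the ``if'' part and the ``moreover'' clause at once.

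For the converse, assume $(R\text{-Inj}, \leq_p)$ is an AEC. By Fact \ref{eno} it suffices to show that every direct sum of injective left $R$-modules is injective. Given injectives $\{M_i : i \in I\}$, I would well-order $I$ as $\{i_\beta : \beta < \kappa\}$ and form the continuous increasing chain of partial sums $N_\gamma := \bigoplus_{\beta < \gamma} M_{i_\beta}$, so that $N_\kappa = \bigoplus_{i \in I} M_i$. Each inclusion $N_\gamma \subseteq N_{\gamma'}$ is a direct-summand (split) inclusion and hence a pure embedding, so $\langle N_\gamma : \gamma \le \kappa\rangle$ is a $\leq_p$-increasing continuous chain. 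By transfinite induction each $N_\gamma$ lies in $R\text{-Inj}$: the successor step uses that a finite direct sum of injectives is injective (indeed $N_{\gamma+1} = N_\gamma \oplus M_{i_\gamma}$), while the limit step uses that an AEC is closed under unions of its $\leq_p$-increasing chains together with the fact that $N_\gamma = \bigcup_{\beta < \gamma} N_\beta$ at limits. In particular $N_\kappa = \bigoplus_{i \in I} M_i$ is injective, so $R$ is left noetherian by Fact \ref{eno}.

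The only delicate point is the converse, and specifically the limit stages of the induction: an arbitrary infinite direct sum of injectives need not be injective a priori (this is exactly what noetherianness controls), so one cannot build the chain by naively adding summands and expect the partial sums to be injective ``for free''. The argument works because every limit partial sum is itself presented as the union of the earlier partial sums, and it is precisely the AEC union axiom --- applied to the split, hence pure, chain --- that certifies membership in $R\text{-Inj}$ at each limit, while finite direct sums of injectives handle the successors unconditionally.
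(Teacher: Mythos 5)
Your proposal is correct and follows essentially the same route as the paper: the backward direction and the ``moreover'' clause via the identification of $R\text{-Inj}$ with $R\text{-AbsP}$ under Fact \ref{eno} together with Theorem \ref{absss}, and the forward direction by using closure under finite direct sums plus the AEC chain/union axiom to get arbitrary direct sums of injectives injective, hence noetherianity by Fact \ref{eno}. Your transfinite-induction writeup of the limit stages simply makes explicit what the paper leaves implicit.
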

\begin{proof}
If $(R\text{-Inj}, \leq_{p})$ is an AEC then the direct sum of injective modules is an injective module because injective modules are closed under finite direct sums. Hence $R$ is left noetherian. On the other hand, if $R$ is left noetherian, then injective modules are the same as absolutely pure modules by Fact \ref{eno}. Hence $(R\text{-Inj}, \leq_{p})$ is an AEC.

The moreover part follows directly from Theorem \ref{absss}.
\end{proof}

The next corollary shows a connection between being \emph{good} in the stability hierachy and being \emph{good} in the axiomatizability hierachy. 

\begin{cor}\label{good} Let $R$ be a ring.
\begin{enumerate}
\item If $(R\text{-AbsP}, \leq_{p})$ is superstable, then the class of absolutely pure left $R$-modules is first-order axiomatizable.
\item If $(R\text{-l-inj}, \leq_{p})$ is superstable, then the class of locally injective left $R$-modules is first-order axiomatizable.
\end{enumerate}
\end{cor}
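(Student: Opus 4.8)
The plan is to reduce both parts to the classical fact of Eklof and Sabbagh that, over a left noetherian ring, the class of injective left $R$-modules is first-order axiomatizable. The superstability hypotheses enter only through Theorem \ref{absss}: superstability of either $(R\text{-AbsP}, \leq_p)$ or $(R\text{-l-inj}, \leq_p)$ already forces $R$ to be left noetherian. Once $R$ is known to be noetherian, the entire content is to identify each of the two classes with the class $R\text{-Inj}$ of injective modules on the nose, and then quote axiomatizability of the injectives.

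For (1) I would first invoke Theorem \ref{absss} to get that $R$ is left noetherian. Fact \ref{eno} then says that the class of absolutely pure left $R$-modules equals the class of injective left $R$-modules. Since the injectives over a left noetherian ring form an elementary class, it follows at once that $R\text{-AbsP}$ is first-order axiomatizable.

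For (2), Theorem \ref{absss} again yields that $R$ is left noetherian, and I would show that over such a ring $R\text{-l-inj} = R\text{-Inj}$. One inclusion is trivial: any injective module $M$ is locally injective, since $M$ itself is an injective submodule containing any prescribed finite tuple. For the reverse inclusion, every locally injective module is absolutely pure by \cite[3.1]{rara} (this is already used in the proof of Theorem \ref{absss}), and over a noetherian ring absolutely pure coincides with injective by Fact \ref{eno}; hence every locally injective module is injective. Thus $R\text{-l-inj} = R\text{-Inj}$, and the same classical result delivers first-order axiomatizability.

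There is no genuine obstacle beyond correctly matching these classes over a noetherian ring; the single external input is the Eklof--Sabbagh characterization of left noetherianity by axiomatizability of the injectives, and I only need its forward direction. If one prefers to stay inside the framework of this paper, I would instead note that a left noetherian ring is left coherent, so that the absolutely pure (equivalently, FP-injective) left modules are closed under direct products, direct limits, and pure submodules, making $R\text{-AbsP}$ a definable category in the sense of Example \ref{ex1}.(6); definable categories are first-order axiomatizable, and over a noetherian ring $R\text{-l-inj} = R\text{-AbsP}$ by the identifications above. The one point that requires care is that these identifications be genuine equalities of classes rather than mere agreement up to elementary equivalence, which is exactly what Fact \ref{eno} and \cite[3.1]{rara} provide.
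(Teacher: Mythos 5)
Your proposal is correct, and its skeleton coincides with the paper's: both arguments begin by applying Theorem \ref{absss} to conclude from superstability that $R$ is left noetherian, and both then reduce to a classical axiomatizability fact. The only divergence is in how that last step is justified. The paper passes through coherence: a left noetherian ring is left coherent, and over a left coherent ring the absolutely pure modules are first-order axiomatizable by \cite[3.4.24]{prest09}; part (2) then follows because over a noetherian ring the locally injective modules coincide with the absolutely pure ones. You instead identify both classes with the injective modules via Fact \ref{eno} and \cite[3.1]{rara} and quote the Eklof--Sabbagh theorem that the injectives over a left noetherian ring form an elementary class. Both routes are one-citation finishes to standard results (indeed your fallback argument via coherence and definable categories in the sense of Example \ref{ex1}.(6) is essentially the paper's proof), so the difference is cosmetic; if anything, the coherence route is marginally more economical because it handles part (1) without needing the absolutely pure $=$ injective identification, while your route makes the role of Fact \ref{eno} more transparent and uses only the forward direction of the Eklof--Sabbagh characterization. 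Your emphasis that the identifications must be genuine equalities of classes, not merely agreement up to elementary equivalence, is the right point of care and is exactly what Fact \ref{eno} and \cite[3.1]{rara} supply.
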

\begin{proof}\
\begin{enumerate}
\item Since $(R\text{-AbsP}, \leq_{p})$ is superstable, then by Theorem \ref{absss} $R$ is left noetherian. Then $R$ is left coherent, so it follows from \cite[3.4.24]{prest09} that absolutely pure modules are first-order axiomatizable
\item The proof is similar to that of (1), using that if $R$ is noetherian then the class of absolutely pure modules is the same as the class of locally injective modules. 
\end{enumerate}
 \end{proof}

We turn our attention to pure-semisimple rings.  A ring is \emph{pure-semisimple} if and only if every $R$-module is pure-injective. These have been thoroughly studied \cite{cha, auslander1, auslander, simson77, zim, simson81, prest84 , simson00, prest09, maz1}. Recall that $R\text{-l-pi}$ is the class of locally pure-injective $R$-modules, these were introduced in Example \ref{ex1}. The equivalence between (1) and (2) of the next assertion was obtained in \cite[4.28]{maz1}.

\begin{theorem}\label{psss}
Let $R$ be a ring. The following are equivalent.

\begin{enumerate}
\item $R$ is left pure-semisimple.
\item $(R\text{-Mod}, \leq_{p})$ is superstable.
\item $(R\text{-l-pi}, \leq_p)$  is superstable. 
\end{enumerate}
\end{theorem}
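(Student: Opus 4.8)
The plan is to apply Theorem \ref{main} to both classes appearing in the statement. First I would check that both $(R\text{-Mod}, \leq_p)$ and $(R\text{-l-pi}, \leq_p)$ satisfy Hypothesis \ref{hyp1}: for the former this is immediate, since the class of all modules is trivially closed under direct sums and under pure-injective envelopes; for the latter it is exactly Example \ref{ex1}.(3). Consequently the five equivalent conditions of Theorem \ref{main} are available for each class, and I would organize the argument around establishing the two equivalences (1) $\Leftrightarrow$ (2) and (1) $\Leftrightarrow$ (3), with (1) serving as the common hub.

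The equivalence (1) $\Leftrightarrow$ (2) falls straight out of Theorem \ref{main}. Saying that $R$ is left pure-semisimple means precisely that every $R$-module is pure-injective, which is condition (4) of Theorem \ref{main} for $(R\text{-Mod}, \leq_p)$; since (4) is equivalent to superstability in that theorem, this is exactly (2). The direction (1) $\Rightarrow$ (3) is equally short: if every module is pure-injective, then in particular every \emph{locally} pure-injective module is pure-injective, which is condition (4) of Theorem \ref{main} for $(R\text{-l-pi}, \leq_p)$, so $(R\text{-l-pi}, \leq_p)$ is superstable.

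The real content, and the step I expect to be the main obstacle, is (3) $\Rightarrow$ (1). The difficulty is that condition (4) of Theorem \ref{main} for $R\text{-l-pi}$ only asserts that locally pure-injective modules are pure-injective, whereas a priori an arbitrary module need not be locally pure-injective, so one cannot conclude pure-semisimplicity directly. My strategy is to pass to $\Sigma$-pure-injectivity. Assuming $(R\text{-l-pi}, \leq_p)$ is superstable, Theorem \ref{main}.(3) gives that every limit model of $R\text{-l-pi}$ is $\Sigma$-pure-injective. Now let $M$ be an arbitrary $R$-module. Its pure-injective envelope $PE(M)$ is pure-injective, hence locally pure-injective (the module itself is a pure-injective pure submodule containing any given finite tuple), so $PE(M) \in R\text{-l-pi}$. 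Exactly as in the proof of Theorem \ref{main} (3) $\Rightarrow$ (4), I would pure-embed $PE(M)$ into a sufficiently large limit model $N'$ of $R\text{-l-pi}$ via \cite[2.10]{maz}; since $N'$ is $\Sigma$-pure-injective, Fact \ref{spi} forces $PE(M)$ to be $\Sigma$-pure-injective. As $M \leq_p PE(M)$, a second application of Fact \ref{spi} yields that $M$ is $\Sigma$-pure-injective, and in particular pure-injective. Since $M$ was arbitrary, every $R$-module is pure-injective, i.e., $R$ is left pure-semisimple.

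The crux is precisely this passage from ``every locally pure-injective module is pure-injective'' to ``every module is pure-injective'', and the only delicate point is recognizing that bare pure-injectivity is too weak to transport along $M \leq_p PE(M)$, whereas $\Sigma$-pure-injectivity descends to arbitrary pure submodules by Fact \ref{spi}. Upgrading to $\Sigma$-pure-injectivity via the limit models of $R\text{-l-pi}$ is what lets me carry the good behaviour of $PE(M)$, which lies in $R\text{-l-pi}$, back down to the possibly ill-behaved $M$, which need not.
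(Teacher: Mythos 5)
Your proof is correct and follows essentially the same route as the paper: both reduce all the equivalences to condition (4) of Theorem \ref{main}, and both handle the key direction (3) $\Rightarrow$ (1) by showing that $PE(M)$ is $\Sigma$-pure-injective and then descending to the pure submodule $M \leq_p PE(M)$ via Fact \ref{spi}. The only (immaterial) difference is in how the $\Sigma$-pure-injectivity of $PE(M)$ is obtained: you embed $PE(M)$ into a $\Sigma$-pure-injective limit model using Theorem \ref{main}.(3) and \cite[2.10]{maz}, whereas the paper observes more directly that $PE(M)^{(\aleph_0)}$ is itself locally pure-injective (closure under direct sums) and hence pure-injective by condition (4), which says the same thing with less machinery.
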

\begin{proof} Recall that $R$-modules and locally pure-injective $R$-modules satisfy Hypothesis \ref{hyp1}. We use Theorem \ref{main}.(4) to show the equivalences. The equivalence between (1) and (2) and the direction (2) to (3) are straightforward. We show (3) to (1).

Let $M$ be an $R$-module, then $PE(M)$ is locally pure-injective and $M \leq_p PE(M)$. Observe that $PE(M)^{(\aleph_0)}$ is locally pure-injective. Then $PE(M)^{(\aleph_0)}$ is pure-injective by hypothesis (3), so $PE(M)$ is $\Sigma$-pure-injective. Hence, $M$ is pure-injective by Fact \ref{spi}. Therefore, $R$ is left pure-semisimple.  \end{proof}

We can obtain an analogous result to Corollary \ref{inj} by substituting the class of injective modules by that of pure-injective modules. We denote by $R\text{-pi}$ the class of pure-injective $R$-modules.

\begin{cor}
Let $R$ be a ring. $(R\text{-pi}, \leq_{p})$ is an AEC if and only if $R$ is left pure-semisimple.  Moreover, if $R$ is left pure-semisimple, then  $(R\text{-pi}, \leq_{p})$ is a superstable AEC.
\end{cor}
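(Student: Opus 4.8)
The plan is to mirror the structure of Corollary~\ref{inj}, treating the two directions of the biconditional separately and then invoking Theorem~\ref{psss} for the moreover part.

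For the backward direction I would argue that if $R$ is left pure-semisimple, then every $R$-module is pure-injective, so $R\text{-pi}=R\text{-Mod}$ as classes. Since $(R\text{-Mod}, \leq_p)$ is an AEC (it satisfies Hypothesis~\ref{hyp1}, as recalled in the proof of Theorem~\ref{psss}), it follows that $(R\text{-pi}, \leq_p)$ is an AEC. The moreover part is then immediate from the equivalence of (1) and (2) in Theorem~\ref{psss}, since $(R\text{-pi}, \leq_p)=(R\text{-Mod}, \leq_p)$ is superstable exactly when $R$ is left pure-semisimple.

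The forward direction is the heart of the argument and where I expect the only real subtlety. Suppose $(R\text{-pi}, \leq_p)$ is an AEC; I want to show every $R$-module is pure-injective. Fix an arbitrary module $N$ and pass to $PE(N)$, which is pure-injective. The key observation is that a countable direct sum is the union of an increasing chain of its finite partial sums: the modules $PE(N)^{(n)}$ for $n<\omega$ are pure-injective (a finite direct sum of pure-injectives is pure-injective), each is a direct summand, hence a pure submodule, of $PE(N)^{(n+1)}$, and $\bigcup_{n<\omega} PE(N)^{(n)} = PE(N)^{(\aleph_0)}$. Since an AEC is closed under unions of $\leq_p$-increasing chains, $PE(N)^{(\aleph_0)} \in R\text{-pi}$, i.e., $PE(N)$ is $\Sigma$-pure-injective. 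As $N \leq_p PE(N)$, Fact~\ref{spi} yields that $N$ is $\Sigma$-pure-injective and in particular pure-injective. Since $N$ was arbitrary, $R$ is left pure-semisimple.

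The main obstacle, and really the only nontrivial step, is recognizing that the AEC axiom of closure under chains, applied to the chain of finite powers $PE(N)^{(n)}$, upgrades the automatic pure-injectivity of \emph{finite} direct sums to $\Sigma$-pure-injectivity of $PE(N)$. Once this is in hand, the transfer to $N$ via Fact~\ref{spi} and the reduction to pure-semisimplicity are routine, and they essentially recover the implication (3)$\Rightarrow$(1) of Theorem~\ref{psss}.
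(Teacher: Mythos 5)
Your proposal is correct and follows essentially the same route as the paper: the paper's proof also uses closure under unions of pure-increasing chains to show $M^{(\aleph_0)}=\bigcup_{n<\omega}M^{(n)}$ is pure-injective for every pure-injective $M$, then transfers to an arbitrary module via $N\leq_p PE(N)$ and Fact \ref{spi}, exactly as you spell out. You have simply made explicit the step the paper leaves as ``an argument similar to that of the previous result.''
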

\begin{proof}
If $(R\text{-pi}, \leq_{p})$ is an AEC, then $M^{(\aleph_0)}=\bigcup _{n < \omega} M^{n}$ is pure-injective for every pure-injective module $M$ as pure-injective modules are closed under finite direct sums. So every pure-injective module is $\Sigma$-pure-injective. Then doing an argument similar to that of the previous result, one can show that $R$ is left pure-semisimple. On the other hand, if $R$ is left pure-semisimple, then all modules are pure-injective. Hence  $(R\text{-pi}, \leq_{p})$ is an AEC. 

The moreover part follows directly from Theorem \ref{psss}.
\end{proof}

We also get a relation between being \emph{good} in the stability hierarchy and being \emph{good} in the axiomatizability hierarchy for locally pure-injective modules. 

\begin{cor}\label{good2} Let $R$ be ring. If $(R\text{-l-pi}, \leq_{p})$ is superstable, then the class of locally pure-injective left $R$-modules is the same as the class of left $R$-modules. So clearly, first-order axiomatizable.
\end{cor}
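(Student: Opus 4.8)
The plan is to reduce immediately to pure-semisimplicity via Theorem \ref{psss}, after which the conclusion is essentially formal. First I would invoke the hypothesis that $(R\text{-l-pi}, \leq_p)$ is superstable together with the equivalence of (1) and (3) in Theorem \ref{psss}: this yields at once that $R$ is left pure-semisimple, i.e., every left $R$-module is pure-injective.

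Next I would unwind the definition of the class $R\text{-l-pi}$ to see that pure-semisimplicity forces every module into this class. Indeed, let $M$ be an arbitrary left $R$-module and $\bar{a} \in M^{< \omega}$. Since $R$ is left pure-semisimple, $M$ is itself pure-injective, and trivially $M \leq_p M$ with $\bar{a} \in M$; so $M$ is a pure-injective pure submodule of itself containing $\bar{a}$. Thus $M$ is locally pure-injective, and therefore $R\text{-Mod} \subseteq R\text{-l-pi}$. The reverse inclusion is immediate, since every locally pure-injective module is a fortiori a left $R$-module, so the two classes coincide.

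Finally, $R\text{-Mod}$ is axiomatized by the (first-order) theory of $R$-modules in the language $L_R$, which gives the ``so clearly, first-order axiomatizable'' clause. The only genuine content is the application of Theorem \ref{psss}; every subsequent step is a direct unwinding of definitions, so I do not anticipate any real obstacle in this argument.
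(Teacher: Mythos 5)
Your proposal is correct and follows exactly the paper's argument: apply Theorem \ref{psss} to deduce that $R$ is left pure-semisimple, so every module is pure-injective and hence (trivially, witnessed by the module itself) locally pure-injective. The only difference is that you spell out the definitional unwinding that the paper compresses into ``so in particular locally pure-injective.''
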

\begin{proof}
Since $(R\text{-l-pi}, \leq_{p})$ is superstable, then by Theorem \ref{psss} $R$ is left pure-semisimple. Hence, every $R$-module is pure-injective, so in particular locally pure-injective.
\end{proof}

Corollaries \ref{good} and \ref{good2} may suggest that given an AEC of modules satisfying Hypothesis \ref{hyp1}, it follows that if the class is superstable, then the class is first-order axiomatizable. This is not the case as witnessed by the next example.

\begin{example}
 It was shown in \cite[3.15]{mazf} that $(R\text{-Flat}, \leq_{p})$ is superstable if and only if $R$ is left perfect. It is known \cite[Theo 4]{eksa} that the class of flat left $R$-modules is first-order axiomatizable if and only if $R$ is right coherent. Therefore, the ring $R$ described in \cite[3.3]{roth} is such that $(R\text{-Flat}, \leq_{p})$ satisfies Hypothesis \ref{hyp1},  $(R\text{-Flat}, \leq_{p})$ is superstable and $R\text{-Flat}$ is not first-order axiomatizable.
\end{example}

As mentioned in the introduction, the main focus of the paper is Question \ref{mainq}. The results of this section can be used to characterized those rings for which all AECs closed under direct sums are superstable.

\begin{lemma}\label{pss} 
Let $R$ be a ring. The following are equivalent. 
\begin{enumerate}
\item $R$ is left pure-semisimple.
\item Every AEC $\K= (K , \leq_{p})$ with $K \subseteq R\text{-Mod}$, such that $K$ is closed under direct sums, is superstable.
\end{enumerate}

\end{lemma}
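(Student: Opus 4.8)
The plan is to derive both implications directly from Theorem \ref{psss} and Theorem \ref{main}, so that essentially no new work is required beyond checking that the relevant hypotheses are satisfied.

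For the direction (2) $\Rightarrow$ (1), I would simply apply the assumption to a well-chosen class. The class $(R\text{-Mod}, \leq_p)$ of all left $R$-modules with pure embeddings is an AEC whose underlying class is trivially closed under direct sums. Hence (2) forces $(R\text{-Mod}, \leq_p)$ to be superstable, and the equivalence of (1) and (2) in Theorem \ref{psss} immediately yields that $R$ is left pure-semisimple. This direction is essentially a one-line application.

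For the direction (1) $\Rightarrow$ (2), let $\K = (K, \leq_p)$ be an arbitrary AEC with $K \subseteq R\text{-Mod}$ closed under direct sums, and suppose $R$ is left pure-semisimple. The key observation is that pure-semisimplicity makes Hypothesis \ref{hyp1} automatic. Indeed, every $R$-module is pure-injective, so for each $M \in K$ we have $PE(M) = M$ (a pure-injective module is its own pure-injective envelope), and therefore $K$ is trivially closed under pure-injective envelopes. Combined with the assumed closure under direct sums, this shows $\K$ satisfies Hypothesis \ref{hyp1}, so the structure theory of this section applies to $\K$. Since every module in $K$ is pure-injective, condition (4) of Theorem \ref{main} holds, and hence $\K$ is superstable by the equivalence (4) $\Leftrightarrow$ (1) of that theorem.

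The only genuine point to be careful about — and the step I expect to be the main (if modest) obstacle — is the verification that closure under pure-injective envelopes comes for free from pure-semisimplicity, i.e.\ that $PE(M) = M$ whenever $M$ is pure-injective; this follows from the minimality in the definition of the pure-injective envelope together with its uniqueness. Once this is in place, Hypothesis \ref{hyp1} holds for $\K$ and both implications reduce to invoking the structure theory already developed in Theorem \ref{main} and Theorem \ref{psss}.
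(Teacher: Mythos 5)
Your proposal is correct and matches the paper's own argument: the backward direction applies (2) to $(R\text{-Mod}, \leq_p)$ and invokes Theorem \ref{psss}, while the forward direction observes that pure-semisimplicity makes closure under pure-injective envelopes automatic, so Hypothesis \ref{hyp1} holds and Theorem \ref{main}.(4) gives superstability. Your extra remark that $PE(M)=M$ for pure-injective $M$ is a correct (and slightly more explicit) justification of the same step.
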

\begin{proof}
The backward direction follows from Theorem \ref{psss} as $(R\text{-Mod}, \leq_p)$ satisfies the hypothesis of (2). We show the forward direction.

Let $\K$ be a class satisfying the hypothesis of (2). Then $\K$ is closed under pure-injective envelopes as every module is pure-injective since we are assuming that the ring is left pure-semisimple. Hence, $\K$ satisfies Hypothesis \ref{hyp1}. Therefore, $\K$ is superstable by Theorem \ref{main}.(4).  \end{proof}

The next well-known ring theoretic result follows from the above lemma, Theorem \ref{absss} and \cite[3.15]{mazf}.

\begin{cor} Assume $R$ is an associative ring with unity.
If $R$ is left pure-semisimple, then $R$ is left noetherian and left perfect. 
\end{cor}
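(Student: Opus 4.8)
The plan is to assemble the corollary from the three ingredients flagged in the surrounding text: Lemma \ref{pss}, Theorem \ref{absss}, and the flat-module characterization \cite[3.15]{mazf}. The overall strategy is to use the left pure-semisimplicity of $R$ to force superstability of two specific AECs of modules, and then read off the two ring-theoretic conclusions from the characterizations of noetherian and perfect rings in terms of superstability.

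First I would invoke Lemma \ref{pss}. Since $R$ is assumed left pure-semisimple, that lemma yields that \emph{every} AEC $(K, \leq_p)$ with $K \subseteq R\text{-Mod}$ that is closed under direct sums is superstable. This is the engine of the argument, and it applies to any concrete class we can exhibit that is an AEC closed under direct sums.

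Next I would specialize to two such classes. The class $(R\text{-AbsP}, \leq_p)$ of absolutely pure modules is an AEC closed under direct sums (indeed it satisfies Hypothesis \ref{hyp1}, by Example \ref{ex1}), so by the previous step it is superstable; Theorem \ref{absss} then gives that $R$ is left noetherian. Likewise, the class $(R\text{-Flat}, \leq_p)$ of flat modules is an AEC (it is closed under direct limits and pure submodules) and is closed under direct sums, so it too is superstable by Lemma \ref{pss}; the characterization \cite[3.15]{mazf}, that $(R\text{-Flat}, \leq_p)$ is superstable if and only if $R$ is left perfect, then gives that $R$ is left perfect. Combining the two conclusions proves the corollary.

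Since each step is a direct citation, I do not expect a genuine obstacle; the only point requiring (routine) care is checking that the two classes invoked really do satisfy the hypothesis of Lemma \ref{pss}, namely that they are AECs closed under direct sums. Both facts are already recorded in the paper (Example \ref{ex1} for absolutely pure modules, and the standard closure of flat modules under direct limits, pure submodules, and direct sums), so the verification is immediate.
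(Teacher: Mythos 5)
Your proposal is correct and follows exactly the route the paper intends: the corollary is stated as a consequence of Lemma \ref{pss}, Theorem \ref{absss}, and \cite[3.15]{mazf}, and you apply Lemma \ref{pss} to the classes of absolutely pure and flat modules precisely as the paper does. The only detail worth noting is that your verification that both classes are AECs closed under direct sums is indeed already recorded in the paper, so nothing is missing.
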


We finish this subsection by applying the technology developed in this section to integral domains. Given an integral domain $R$, we study the class of divisible $R$-modules, denoted by $R\text{-Div}$, and the class torsion-free $R$-modules, denoted by $R\text{-TF}$. A module $M$ is a divisible $R$-module if for every $m \in M$ and $r \neq 0 \in R$, there is $n \in M$ such that $rn = m$. A module $M$ is a torsion-free $R$-module if for every $m \neq 0 \in M$ and every $r\neq 0 \in R$, $rm \neq 0$. It is easy to show that $(R\text{-Div}, \leq_p)$ and $(R\text{-TF}, \leq_p)$  both satisfy Hypothesis \ref{hyp1}, this is the case as they are both definable classes in the sense of Example \ref{ex1}.(6). 

\begin{lemma} Let $R$ be an integral domain.
\begin{enumerate}
\item $R$ is a Dedekind domain if and only if $(R\text{-Div}, \leq_p)$ is superstable.
\item $R$ is a field if and only if $(R\text{-TF}, \leq_p)$ is superstable.
\end{enumerate}
\end{lemma}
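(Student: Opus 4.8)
The plan is to reduce both statements to Theorem \ref{main}, and specifically to the characterization in Theorem \ref{main}.(4) that superstability is equivalent to every module in the class being pure-injective. For each equivalence I would prove one direction by a purely ring-theoretic computation showing that the relevant closure hypothesis forces every member of the class to be pure-injective (hence invoke Theorem \ref{main}), and the converse by producing a non-pure-injective module in the class whenever the ring fails the desired property.

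\emph{Part (1), Dedekind domains.} First I would recall that over an integral domain a module is injective if and only if it is divisible precisely when $R$ is a Dedekind domain; more usefully, $R$ is Dedekind if and only if every divisible module is injective (this is a standard characterization, essentially that Dedekind domains are the hereditary domains). For the forward direction, assume $R$ is Dedekind; then every divisible module is injective, hence pure-injective, so by Theorem \ref{main}.(4) the class $(R\text{-Div}, \leq_p)$ is superstable. For the converse I would argue contrapositively: if $R$ is not Dedekind, I exhibit a divisible module that is not pure-injective, so Theorem \ref{main}.(4) fails and the class is not superstable. The cleanest route is probably to use that a non-Dedekind domain has a divisible module that is not injective, and then to upgrade ``not injective'' to ``not pure-injective'' using that within the class of divisible modules pure-injective would force injective (since divisible plus pure-injective, together with the definable-class structure from Example \ref{ex1}.(6), should collapse to injective).

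\emph{Part (2), fields.} For the forward direction, if $R$ is a field then $R\text{-Mod}$ is just the category of vector spaces; every torsion-free module is all of $R\text{-Mod}$ and every module is injective, hence pure-injective, so superstability follows from Theorem \ref{main}.(4). For the converse, suppose $R$ is an integral domain that is not a field, so there is a nonzero non-unit $r \in R$. I would produce a torsion-free module that is not pure-injective; the natural candidate is $R$ itself (or a suitable direct sum/product construction built from $R$), whose failure of $r$-divisibility can be leveraged to build an $M$-consistent $pp$-type that is not realized, contradicting pure-injectivity via the criterion \cite[2.8]{prest}. Then Theorem \ref{main}.(4) gives non-superstability.

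\emph{Main obstacle.} The routine directions (Dedekind/field $\Rightarrow$ superstable) are immediate from Theorem \ref{main}.(4) once one identifies the class with injective modules. The genuine work is in the two converses, where I must manufacture an explicit torsion-free (resp.\ divisible) module that is \emph{not} pure-injective from the failure of the field (resp.\ Dedekind) hypothesis, and verify pure-injectivity failure via the $pp$-type realization criterion. I expect the divisible-module converse to be the hardest step, since bridging from ``divisible but not injective'' to ``not pure-injective'' requires checking that pure-injective divisible modules are injective over a non-Dedekind domain, which depends on the hereditary/coherence structure of $R$ and may need the characterization of injectivity for divisible modules over Prüfer or valuation pieces of $R$.
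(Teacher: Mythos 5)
Your forward directions are fine (and for (2) your vector-space argument is even more direct than the paper's, which routes through flat $=$ torsion-free over Pr\"{u}fer domains and \cite[3.15]{mazf}). The two converses are where your plan diverges from the paper and where it runs into trouble. The paper does \emph{not} construct explicit non-pure-injective modules: it applies Theorem \ref{main}.(4) to conclude that every module in the class is pure-injective and then quotes ready-made algebraic characterizations --- for (1), every $h$-divisible module (epimorphic image of an injective) is divisible, hence pure-injective, and \cite[2.5]{sal} says this forces $R$ to be Dedekind; for (2), \cite[2.3]{sal} gives that $R$ is Pr\"{u}fer, so torsion-free $=$ flat, whence $R$ is perfect by \cite[3.15]{mazf} and then a field by \cite[2.3]{sal2}.

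The genuine gap is in your converse to (1). The bridge ``divisible $+$ pure-injective $\Rightarrow$ injective'' is exactly what you need to upgrade ``some divisible module is not injective'' to ``some divisible module is not pure-injective'', but that implication rests on injective $=$ absolutely pure $+$ pure-injective together with divisible $=$ absolutely pure, and the latter holds over \emph{Pr\"{u}fer} domains, not over arbitrary domains: in general a divisible module need not be pure in its injective envelope. You have no Pr\"{u}fer hypothesis available at that point --- extracting it from superstability is precisely the content of the Salce results the paper invokes --- so the hardest step of your plan is left unproved. Your converse to (2) is also not right as stated: $R$ itself can be pure-injective without being a field (the $p$-adic integers $\mathbb{Z}_p$, a complete DVR, are pure-injective over themselves), so failure of $r$-divisibility of $R$ does not witness failure of pure-injectivity. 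The fix is the direct-sum variant you mention only parenthetically: superstability makes $R^{(\aleph_0)}$ pure-injective, i.e.\ $R$ is $\Sigma$-pure-injective, hence satisfies the DCC on the $pp$-definable subgroups $rR \supseteq r^2R \supseteq \cdots$ (Fact \ref{spi}), which in a domain forces $r$ to be a unit. With that repair (2) goes through without citing Salce; (1) still needs a genuinely new ingredient.
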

\begin{proof}\
\begin{enumerate}
\item $\Rightarrow$: Since $R$ is a Dedekind domain, every divisible $R$-module is injective by \cite[4.24]{rot}. As injective modules are pure-injective, $(R\text{-Div}, \leq_p)$ is superstable by Theorem \ref{main}.

$\Leftarrow$: Recall that a module is $h$-divisible if it is the epimorphic image of an injective module. Therefore, the class of $h$-divisible $R$-modules is contained in the class of divisible $R$-modules. Then every $h$-divisible $R$-module is pure-injective by Theorem \ref{main}. Therefore, $R$ is a Dedekind domain by \cite[2.5]{sal}. 

\item $\Rightarrow$: If $R$ is a field, clearly $R$ is a Pr\"{u}fer domain. So the class of flat modules is the same as the class of torsion-free modules by \cite[4.35]{rot}.  Then $(R\text{-TF}, \leq_p)$ is superstable since $R$ is perfect and by \cite[3.15]{mazf}.

$\Leftarrow$: It follows from Theorem \ref{main} and \cite[2.3]{sal} that $R$ is a Pr\"ufer domain. So, as before, the class of flat modules is the same as the class of torsion-free modules. Then $R$ is left perfect by \cite[3.15]{mazf}. Therefore, $R$ is a field by \cite[2.3]{sal2}. \end{enumerate}
\end{proof} 

The next result follows directly from the above lemma.
\begin{cor} Let $R$ be an integral domain.
$(R\text{-TF}, \leq_p)$ is superstable if and only if $\left( R\text{-TF}, \leq_p \right)$ is $\lambda$-categorical for every $\lambda \geq (|R| + \aleph_0)^+$.
\end{cor}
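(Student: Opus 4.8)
The plan is to combine part (2) of the previous lemma, which asserts that $(R\text{-TF}, \leq_p)$ is superstable if and only if $R$ is a field, with the classical structure theory of vector spaces. Writing $\K = (R\text{-TF}, \leq_p)$, it then suffices to establish that $R$ is a field if and only if $\K$ is $\lambda$-categorical for every $\lambda \geq \rc$. Chaining this equivalence with part (2) of the lemma yields the corollary without circularity: superstability $\Rightarrow$ $R$ field $\Rightarrow$ categoricity, and categoricity $\Rightarrow$ superstability directly.

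For the forward direction, suppose $R$ is a field. Then every $R$-module is a vector space and is torsion-free, since if $r \neq 0$ then $r$ is invertible and $rm = 0$ forces $m=0$; hence $R\text{-TF} = R\text{-Mod}$. Over a field every subspace is a direct summand and therefore pure, so $\leq_p$ coincides with the subspace relation and every linear injection is a $\K$-embedding. Fix $\lambda \geq \rc$, so $\lambda > |R| + \aleph_0$. For infinite $d$ a space of dimension $d$ has cardinality $\max(|R|, d)$, so any $V \in R\text{-TF}$ with $\|V\| = \lambda$ satisfies $\max(|R|, \dim V) = \lambda > |R|$, forcing $\dim V = \lambda$. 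Since dimension determines a vector space up to a linear isomorphism, which is a $\K$-embedding in both directions, all such $V$ are isomorphic, and $\K$ is $\lambda$-categorical.

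For the backward direction I would argue that $\lambda$-categoricity forces superstability through Theorem \ref{main}, after which part (2) of the lemma gives that $R$ is a field. Recall that $\K$ satisfies Hypothesis \ref{hyp1}, so the results of this section apply. Choose $\lambda = 2^{\LS(\K)}$; since $\LS(\K) \geq |R| + \aleph_0$ by Lemma \ref{stru1}, we have $\lambda^{|R| + \aleph_0} = \lambda$, and moreover $\lambda \geq \LS(\K)^+ \geq \rc$. By the existence of limit models recorded in the discussion preceding Lemma \ref{bigpi}, $\K$ has $(\lambda, \alpha)$-limit models for limit $\alpha < \lambda^+$, and by Theorem \ref{stat1} it is $\lambda$-stable. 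Since $\lambda \geq \rc$, the hypothesis gives that $\K$ is $\lambda$-categorical, so any two $\lambda$-limit models, being models of cardinality $\lambda$, are isomorphic. Thus $\K$ has uniqueness of limit models of cardinality $\lambda$ for a $\lambda \geq \LS(\K)^+$, and Theorem \ref{main}, direction $(2) \Rightarrow (1)$, yields superstability; part (2) of the lemma then gives that $R$ is a field.

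The routine parts are the vector-space bookkeeping relating dimension and cardinality in the forward direction. The one point requiring care is the backward direction: we must exhibit a single cardinal $\lambda \geq \LS(\K)^+$ at which limit models provably exist, at which $\K$ is stable, and which satisfies $\lambda \geq \rc$ so the categoricity hypothesis applies. The choice $\lambda = 2^{\LS(\K)}$ meets all three requirements and lets us invoke Theorem \ref{main} via uniqueness of limit models.
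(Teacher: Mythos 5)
Your proposal is correct and follows the route the paper intends: the paper gives no details, stating only that the corollary ``follows directly from the above lemma,'' and your argument is exactly the natural way to fill that in --- reduce both directions to the lemma's equivalence with $R$ being a field, handle fields by vector-space dimension counting, and in the converse use categoricity at a single well-chosen cardinal (where stability and hence limit models are available) to get uniqueness of limit models and invoke Theorem \ref{main}(2)$\Rightarrow$(1). Your care in choosing $\lambda = 2^{\LS(\K)}$ so that $\lambda^{|R|+\aleph_0}=\lambda$, $\lambda \geq \LS(\K)^+$, and $\lambda \geq (|R|+\aleph_0)^+$ all hold simultaneously is precisely the point that needs checking, and you check it correctly.
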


Finally, we record a couple of results on AECs of abelian groups . The result for torsion-free abelian groups was first obtained in \cite[0.3]{baldwine}.

\begin{cor}\
\begin{enumerate}
\item The AEC of divisible abelian groups with pure embeddings is superstable. 
\item The AECs of torsion-free abelian groups with pure embeddings and reduced torsion-free abelian groups with pure embeddings are strictly stable, i.e., stable not superstable. 
\end{enumerate}
\end{cor}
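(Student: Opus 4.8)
The plan is to specialize the preceding lemma to $R = \mathbb{Z}$ and then supplement it with Theorem \ref{stat1} and Theorem \ref{main}. First I would observe that $\mathbb{Z}$ is a Dedekind domain (indeed a principal ideal domain). Part (1) of the lemma then applies verbatim to give that $(\mathbb{Z}\text{-Div}, \leq_p)$, which is exactly the AEC of divisible abelian groups with pure embeddings, is superstable. This disposes of item (1) immediately.

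For item (2) I would treat stability and the failure of superstability separately. Both the class of torsion-free abelian groups and the class of reduced torsion-free abelian groups satisfy Hypothesis \ref{hyp1}: the former is a definable class as in Example \ref{ex1}.(6) and the latter is Example \ref{ex1}.(4). Hence Theorem \ref{stat1} applies, and since $|R| + \aleph_0 = \aleph_0$ for $R = \mathbb{Z}$, each class is $\lambda$-stable for every $\lambda \geq \LS(\K)$ with $\lambda^{\aleph_0} = \lambda$. As there are arbitrarily large such $\lambda$, both classes are stable.

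To refute superstability I would invoke Theorem \ref{main}, whose item (4) characterizes superstability under Hypothesis \ref{hyp1} by the requirement that every model be pure-injective. For the torsion-free case this is already contained in part (2) of the lemma, since $\mathbb{Z}$ is not a field. More directly, and covering the reduced case at the same time, I would exhibit $\mathbb{Z}$ itself as a witness: it is torsion-free, it is reduced (it has no nontrivial divisible subgroups), and it is not pure-injective. Thus each of the two classes contains a model that fails to be pure-injective, so by Theorem \ref{main}.(4) neither is superstable. Combining this with the stability established above yields that both are strictly stable.

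The main point requiring care is that the reduced torsion-free case is not literally an instance of the preceding lemma, so the claim that the corollary ``follows directly'' is mildly optimistic; the genuine content for that case is the non-pure-injectivity of $\mathbb{Z}$ together with Theorem \ref{main}. The only thing to verify carefully is therefore that $\mathbb{Z}$ is a legitimate witness, i.e.\ that it is simultaneously reduced, torsion-free, and not pure-injective, rather than leaning on the lemma alone.
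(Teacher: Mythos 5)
Your proposal is correct and follows exactly the derivation the paper intends (the paper states this corollary without an explicit proof, immediately after the Dedekind-domain/field lemma): part (1) is the lemma applied to the Dedekind domain $\mathbb{Z}$, stability in part (2) is Theorem \ref{stat1} via Hypothesis \ref{hyp1}, and failure of superstability follows from Theorem \ref{main}.(4). Your extra care in handling the reduced torsion-free case --- which is not literally an instance of the lemma --- by exhibiting $\mathbb{Z}$ as a reduced, torsion-free, non-pure-injective witness is exactly the right supplement.
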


\section{Classes closed under pure epimorphic images}

In this section we study classes closed under direct sums, pure submodules, and pure epimorphic images. We show that they are always stable. The proof is different to that of the previous section as we first show the existence of a weakly stable independence relation with local character and from it we obtain the stability cardinals.

\begin{hypothesis}\label{hyp2}
Let $\K= ( K, \leq_{p})$ be an AEC with $K \subseteq R\text{-Mod}$ for a fixed ring $R$ such that:
\begin{enumerate}
\item $K$ is closed under direct sums.
\item $K$ is closed under pure submodules.
\item $K$ is closed under pure epimorphic images.
\end{enumerate}
\end{hypothesis}

\begin{remark}
Most of the results in this section assume the above hypothesis, but not all of them. We will explicitly mention when we assume the hypothesis.
\end{remark}

Below we give some examples of classes of modules satisfying Hypothesis \ref{hyp2}.

\begin{example}\label{ex2}
Our main source of examples are $F$-classes. These were introduced in \cite{prz} and studied in detail in \cite{hero}. Let us recall that an $F$-class is a class of modules axiomatizable by formulas of the form:

\[ \forall x( \phi \left(x \right) \to \bigvee_{\psi\left(x \right) \in \Psi} \psi \left(x \right) ). \]

Where $\phi$ is a $pp$-formula with one free variable and $\Psi$ is a collection of $pp$-formulas (possibly infinite) with one free variable such that $ \psi[M] \subseteq \phi[M]$ for every $\psi \in \Psi$ and $M$ an $R$-module and $\Psi$ is closed under finite sums. Recall that $\Psi$ is closed under finite sums if for every $\psi_0, \cdots ,\psi_{n-1} \in \Psi$, $\psi_0 + \cdots + \psi_{n-1} \in \Psi$ where $\psi_1 + \cdots + \psi_{n-1} (x) = \exists y_1 \cdots \exists y_n  ( x= y_1 + \cdots y_n \wedge  (\bigwedge_{k < n} \psi_k(y_k)) )$

It follows from \cite[2.3]{hero} that every $F$-class is closed under direct sums, pure submodules and pure epimorphic images. Moreover, it is clear that $F$-classes with pure embeddings are AECs. Therefore, every $F$-class satisfies Hypothesis \ref{hyp2}.

Some interesting examples of $F$-classes are\footnote{All of these examples are presented in \cite{roth2} and there it is explained why they are $F$-classes.}:
\begin{enumerate}
\item $(R\text{-Flat}, \leq_{p})$ where $R\text{-Flat}$ is the class of flat left $R$-modules. A module $M$ is flat if $(-) \otimes M$ is an exact functor.
\item $(\text{p-grp}, \leq_p)$ where $\text{p-grp}$ is the class of abelian $p$-groups for $p$ a prime number. A group $G$ is a $p$-group if every element $g \neq 0$ has order $p^n$ for some $n \in \mathbb{N}$. 
\item $(\text{Tor}, \leq_p)$ where $\text{Tor}$ is the class of torsion abelian groups. A group $G$ is a torsion group if every element $g \neq 0$ has finite order. 
\item $(\s\text{-Tor}, \leq_p)$ where $\s\text{-Tor}$ is the class of $\s$-torsion $R$-modules in the sense of \cite{maru}. A module $M$ is an $\s$-torsion module if it satisfies:

\[ \forall x ( x=x \to  \bigvee_{\psi(R)=0,\, \psi \in pp\text{-formula} } \psi \left(x\right) ) \]

This model-theoretic description is obtained in \cite[3.6]{roth2}.
\item $(\chi, \leq_{p})$ where $\chi$ is a definable category of modules in the sense of \cite[\S 3.4]{prest09}.
\end{enumerate}
\end{example}

\begin{remark}
It is worth mentioning that none of the above examples are first-order axiomatizable with the exception of the last one.
\end{remark}

\begin{remark}
$(R\text{-AbsP}, \leq_{p})$ and $(\text{RTF}, \leq_p)$ both satisfy Hypothesis \ref{hyp1}, but do not satisfy Hypothesis \ref{hyp2}. If either class satisfied Hypothesis \ref{hyp2}, then they would be first-order axiomatizable by \cite[3.4.7]{prest09}, which we know is not the case.

On the other hand, $(R\text{-Flat}, \leq_{p})$, $(\text{p-grp}, \leq_p)$ and $(\text{Tor}, \leq_p)$ satisfy Hypothesis \ref{hyp2}, but do not satisfy Hypothesis \ref{hyp1}. The case of flat modules is well-known and for torsion groups see \cite[3.1]{maztor}.

Therefore, the classes of modules satisfying Hypothesis \ref{hyp1} are not contained in those satisfying Hypothesis \ref{hyp2} and vice versa. Definable classes satisfy both of the hypothesis, but there are non-definable classes as well (see Example \ref{ex1}.(5)).
\end{remark}

\subsection{Stability} We begin by recalling some important properties of pushouts in the category of $R$-modules with morphisms, we denote this category by $R$-Mod.

\begin{remark} 
\begin{itemize}\
\item Given a span $( f_1: M \to N_1, f_2: M \to N_2 )$ in $R$-Mod, a \emph{pushout} is a triple $(P, g_1, g_2)$ with $g_1 \circ f_1 = g_2 \circ f_2$ that is a solution to the universal property that for every $(Q, h_1, h_2)$ such that $h_1 \circ f_1 = h_2 \circ f_2$, there is a unique $t: P \to Q$ making the following diagram commute:

\[
  \xymatrix@=3pc{
    & & Q \\
    N_1 \ar[r]^{g_1}\ar@/^/[rru]^{h_1} & P \ar[ru]^t \pullbackcorner & \\
    M \ar [u]^{f_1} \ar[r]_{f_2} & N_2 \ar[u]_{g_2} \ar@/_/[ruu]_{h_2} &
  }
\]
\item The pushout of a pair of morphisms $(f_1: M \to N_1, f_2: M \to N_2)$ in $R$-Mod is given by: 
\[ (P= (N_1 \oplus N_2)/ \{ (f_1(m), -f_2(m)) : m \in M\} ,\; g_1: n_1 \mapsto [(n_1, 0)], \; g_2: n_2 \mapsto [(0, n_2)] ). \] 

Moreover,  for every $(Q, h_1, h_2)$ such that $h_1 \circ f_1 = h_2 \circ f_2$, we have that $t: P \to Q$ is given by $t([(n_1, n_2)])= h_1(n_1) + h_2(n_2)$.   

\item (\cite[2.1.13]{prest09}) If $(f_1: M \to N_1, f_2: M \to N_2)$ is a span of pure embeddings in $R$-Mod and $(P, g_1, g_2)$ is the pushout, then $g_1$ and $g_2$ are pure embeddings. 
\end{itemize}

\end{remark}

The next result will be useful to study classes under Hypothesis \ref{hyp2}.  

\begin{lemma}\label{relation}
Let $K \subseteq R\text{-Mod}$ be closed under finite direct sums, pure submodules and isomorphisms, then the following are equivalent:
\begin{enumerate}
\item $K$ is closed under pushouts of pure embeddings in $R$-Mod, i.e., if $M,N_1, N_2 \in K$, $f_1: M \to N_1$ is a pure embedding, $f_2: M \to N_2$ is a pure embeddings and $P$ is the pushout of $(f_1, f_2)$ in $R\text{-}Mod$, then $P \in K$. 
\item $K$ is closed under pure epimorphic images.
\end{enumerate}
\end{lemma}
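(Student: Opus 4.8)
The plan is to exploit the explicit description of the pushout recalled in the remark above, namely that the pushout of a span $(f_1 : M \to N_1, f_2 : M \to N_2)$ is $P = (N_1 \oplus N_2)/\{(f_1(m), -f_2(m)) : m \in M\}$ with $g_1(n_1) = [(n_1,0)]$ and $g_2(n_2) = [(0,n_2)]$. This formula presents any pushout as a quotient of a finite direct sum by the image of a single map, and read in reverse it lets one recognize a suitable quotient as a pushout; these two readings give the two implications.

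For $(2) \Rightarrow (1)$, I would take a span of pure embeddings $f_1 : M \to N_1$, $f_2 : M \to N_2$ with $M, N_1, N_2 \in K$ and consider the canonical surjection $\pi : N_1 \oplus N_2 \to P$. Its kernel is the image of the homomorphism $d : M \to N_1 \oplus N_2$, $m \mapsto (f_1(m), -f_2(m))$. The key step is to verify that $d$ is a pure embedding: given a pp-formula $\phi$ and $\bar a \in M^{<\omega}$, since solvability of a finite system of linear equations in $N_1 \oplus N_2$ is equivalent to solvability in each coordinate, $N_1 \oplus N_2 \models \phi(d(\bar a))$ forces $N_1 \models \phi(f_1(\bar a))$, and purity of $f_1$ then yields $M \models \phi(\bar a)$; pushing the resulting witnesses forward along the homomorphism $d$ shows $\phi$ is realized inside $d[M]$. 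Hence $\ker \pi$ is pure in $N_1 \oplus N_2$, so $\pi$ is a pure epimorphism. As $N_1 \oplus N_2 \in K$ by closure under finite direct sums, closure under pure epimorphic images gives $P \in K$.

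For $(1) \Rightarrow (2)$, let $Q$ be a pure epimorphic image of some $N \in K$, so $Q \cong N/C$ with $C$ a pure submodule of $N$; then $C \in K$ by closure under pure submodules, and the inclusion $\iota : C \to N$ is a pure embedding. The crucial move is to form the pushout of the span $(\iota, \iota)$, i.e. of the same inclusion along both legs. A direct computation with the pushout formula identifies this pushout with $N \oplus (N/C)$: the map $N \oplus N \to N \oplus (N/C)$ sending $(n_1, n_2) \mapsto (n_1 + n_2,\, n_1 + C)$ is surjective and kills exactly the submodule $\{(c, -c) : c \in C\}$, so it descends to the required isomorphism. By hypothesis (1) the pushout lies in $K$, whence $N \oplus (N/C) \in K$; since $N/C \cong Q$ is a direct summand, hence a pure submodule, of $N \oplus (N/C)$, closure under pure submodules yields $Q \in K$.

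The main obstacle is the construction in $(1) \Rightarrow (2)$: it is not obvious how to obtain a quotient $N/C$ from pushouts of \emph{pure embeddings}, since the naive attempt of pushing $\iota : C \to N$ out against the map $C \to 0$ fails because the latter is not an embedding. The device of pushing the inclusion out against itself, which produces $N/C$ as a direct summand of the pushout, is the heart of the argument. By contrast, the purity verification in $(2) \Rightarrow (1)$ is routine once one recalls that pp-formulas commute with finite direct sums.
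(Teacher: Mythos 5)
Your proposal is correct and follows essentially the same route as the paper: the direction $(2)\Rightarrow(1)$ realizes the pushout as a pure epimorphic image of $N_1\oplus N_2$ via the canonical projection, and $(1)\Rightarrow(2)$ uses exactly the paper's key device of pushing the pure inclusion $C\hookrightarrow N$ out against itself and recovering $N/C$ as a pure submodule of the result. Your explicit identification of that pushout with $N\oplus(N/C)$ is just a repackaging of the paper's pure embedding $b+C\mapsto[(b,-b)]$, which corresponds to the inclusion of the second summand under your isomorphism.
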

\begin{proof}
$\Rightarrow$: Assume that the following is a pure-exact sequence:
\[
\xymatrix{
  0 \ar[r] & A \ar[r]^-{i} & B  \ar[r]^-{g} & C  \ar[r] & 0 
}
\]
with $B \in K$. As $A \leq_p B$ and $K$ is closed under pure submodules, it follows that $A \in K$. Then by hypothesis we have $(B \oplus B)/ \{ (a, -a) :  a \in A \} \in K$ because this is the pushout of $(A \hookrightarrow B, A \hookrightarrow B)$.

Define $f: B/A \to (B \oplus B)/ \{ (a, -a) :  a \in A \} $ by $f( b + A) = (b, -b) + \{ (a, -a) :  a \in A \}$.  It is easy to check that $f$ is a pure embeddings. As $K$ is closed under pure submodules, this implies that $B/A \in K$. Hence $C \in K$. 

$\Leftarrow$: Let $A \leq_p B, C$ be a span with $A,B, C \in K$. Observe that $(B \oplus C)/ \{ (a, -a) : a \in A \}$ is the pushout of $(A \hookrightarrow B, A \hookrightarrow C)$. Since $K$ is closed under direct sums $B \oplus C \in K$ and it is straightforward to show that $\pi : B \oplus C \to (B \oplus C)/ \{ (a, -a) : a \in A \}$ is a pure epimorphism. Therefore, $(B \oplus C)/ \{ (a, -a) : a \in A \} \in K$.  \end{proof}

\begin{cor}\label{push}
If $\K$ satisfies Hypothesis \ref{hyp2}, then $K$ is closed under pushouts of pure embeddings in $R$-Mod , i.e., if $M,N_1, N_2 \in K$, $f_1: M \to N_1$ is a pure embedding, $f_2: M \to N_2$ is a pure embeddings and $P$ is the pushout of $(f_1, f_2)$ in $R\text{-}Mod$, then $P \in K$. 
\end{cor}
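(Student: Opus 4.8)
The plan is to deduce this directly from Lemma \ref{relation}, since that lemma already contains all the substantive work (the explicit pushout computations and the verification that the relevant maps are pure embeddings). The corollary is just the specialization of the equivalence to a class satisfying Hypothesis \ref{hyp2}, so the only task is to check that such a class meets the standing assumptions of Lemma \ref{relation} and then to read off the desired direction of the equivalence.

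First I would verify the hypotheses of Lemma \ref{relation} for $K$. The inclusion $K \subseteq R\text{-Mod}$ is part of Hypothesis \ref{hyp2}. Closure under finite direct sums is immediate from Hypothesis \ref{hyp2}.(1), which gives closure under arbitrary direct sums. Closure under pure submodules is exactly Hypothesis \ref{hyp2}.(2). Finally, closure under isomorphisms holds because $\K = (K, \leq_p)$ is an AEC, and AECs are by definition abstract classes closed under isomorphism. Thus all the standing assumptions of Lemma \ref{relation} are satisfied.

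Next I would invoke the equivalence. Hypothesis \ref{hyp2}.(3) states precisely that $K$ is closed under pure epimorphic images, which is condition (2) of Lemma \ref{relation}. By the equivalence established there, condition (1) also holds, namely that $K$ is closed under pushouts of pure embeddings in $R\text{-Mod}$. This is exactly the conclusion of the corollary.

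I do not expect any genuine obstacle: the entire difficulty of the statement was absorbed into Lemma \ref{relation}, where one must exhibit the pushout as a concrete quotient, construct the pure embedding $f : B/A \to (B \oplus B)/\{(a,-a) : a \in A\}$, and check purity of the relevant epimorphism. Once that lemma is in hand, the corollary follows by bookkeeping alone. The only point worth stating explicitly (so that the reader is not left to reconstruct it) is why $K$ is closed under isomorphisms, and this is simply the AEC axioms; everything else is a direct translation of Hypothesis \ref{hyp2} into the language of Lemma \ref{relation}.
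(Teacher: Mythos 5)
Your proposal is correct and matches the paper's intended argument exactly: the corollary is stated without proof immediately after Lemma \ref{relation}, precisely because it is the specialization you describe, with Hypothesis \ref{hyp2} supplying closure under (finite) direct sums, pure submodules, and pure epimorphic images, and the AEC axioms supplying closure under isomorphisms. Your explicit verification of these hypotheses, including the isomorphism-closure point, is exactly the bookkeeping the paper leaves to the reader.
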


From the corollary above and closure under direct sums it is clear that if a class satisfies Hypothesis \ref{hyp2}, then it has joint embedding, amalgamation and no maximal models. We record this result for future reference. 

\begin{lemma}\label{stru2}
If $\K$ satisfies Hypothesis \ref{hyp2}, then $\K$ has joint embedding, amalgamation, no maximal models and $\LS(\K)=|R| + \aleph_0$. 
\end{lemma}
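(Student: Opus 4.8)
The plan is to deduce all four conclusions directly from Corollary \ref{push} together with closure under direct sums (Hypothesis \ref{hyp2}.(1)), since Corollary \ref{push} already hands us closure under pushouts of pure embeddings in $R$-Mod. First I would dispatch joint embedding and no maximal models exactly as in the proof of Lemma \ref{stru1}: given $M, N \in K$, the direct sum $M \oplus N$ lies in $K$ by closure under direct sums, and the canonical inclusions $M \to M \oplus N$ and $N \to M \oplus N$ are pure embeddings (a summand is always pure), so $M \oplus N$ is a common extension witnessing joint embedding; for no maximal models, given $M \in K$ pick any nonzero $N \in K$ and note $M \leq_p M \oplus N$ is a proper pure extension in $K$.

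For amalgamation, I would feed a span of pure embeddings $f_1 : M \to N_1$, $f_2 : M \to N_2$ with $M, N_1, N_2 \in K$ into the pushout $(P, g_1, g_2)$ in $R$-Mod. By Corollary \ref{push} we have $P \in K$. The crucial input is the third bullet of the pushout remark (from \cite[2.1.13]{prest09}): the pushout of a span of pure embeddings has $g_1$ and $g_2$ pure embeddings. Hence $g_1 : N_1 \to P$ and $g_2 : N_2 \to P$ are $\K$-embeddings, and by the commutativity of the pushout square $g_1 \circ f_1 = g_2 \circ f_2$, so $P$ amalgamates $N_1$ and $N_2$ over $M$. This is the whole content of amalgamation, and it is essentially immediate once Corollary \ref{push} and the purity of the pushout legs are in hand.

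Finally, for the Löwenheim–Skolem number, I would argue $\LS(\K) = |R| + \aleph_0$. The lower bound $|R| + \aleph_0 \leq \LS(\K)$ holds because the language $L_R$ has $|R| + \aleph_0$ symbols. For the upper bound, given $N \in K$ and a subset $A \subseteq |N|$ with $|A| \leq |R| + \aleph_0$, the submodule generated by $A$ has cardinality at most $|R| + \aleph_0$; I would then take its pure closure inside $N$, which can be built as a union of an $\omega$-chain of generated submodules and stays of size $|R| + \aleph_0$, yielding a pure submodule $M \leq_p N$ with $A \subseteq |M|$, $\|M\| \leq |R| + \aleph_0$, and $M \in K$ by closure under pure submodules (Hypothesis \ref{hyp2}.(2)). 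The main obstacle, and the only step requiring care, is this Löwenheim–Skolem computation — specifically verifying that the pure closure of a small set can be taken small, which uses that purity is witnessed by finite $pp$-formulas over parameters so that only countably many witnesses per parameter tuple need be adjoined at each stage; the remaining structural properties follow formally from the pushout machinery already established.
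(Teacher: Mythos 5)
Your proposal is correct and follows essentially the same route as the paper, which derives joint embedding and no maximal models from closure under direct sums and amalgamation from Corollary \ref{push} together with the fact that the pushout of a span of pure embeddings has pure legs \cite[2.1.13]{prest09}. The paper leaves the L\"{o}wenheim--Skolem computation implicit, and your standard $\omega$-chain argument adjoining witnesses for $pp$-formulas, combined with closure under pure submodules, fills that in correctly.
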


Our proof that $\K$ is stable under Hypothesis \ref{hyp2} is longer than that under Hypothesis \ref{hyp1}. This is the case as we do not know if Galois-types are $pp$-syntactic under Hypothesis \ref{hyp2}.\footnote{For torsion groups and $p$-groups this can be done, see \cite[3.4, 4.5]{maztor}.}  The way we proceed is by defining an independence relation in the sense of Subsection 2.2 and showing that it is a weakly stable independence relation with local character.

\begin{defin}\label{indp} Assume $\K$ is an AEC satisfying Hypothesis \ref{hyp2}.
$(f_1, f_2, h_1, h_2) \in \dnf$ if and only if all the arrows of the outer square are pure embeddings and the unique map $t: P \to Q$ is a pure embedding:

 \[
  \xymatrix@=3pc{
    & & Q \\
    N_1 \ar[r]^{g_1}\ar@/^/[rru]^{h_1} & P \ar[ru]^t \pullbackcorner & \\
    M \ar [u]^{f_1} \ar[r]_{f_2} & N_2 \ar[u]_{g_2} \ar@/_/[ruu]_{h_2} &
  }
\]

\end{defin}

\begin{remark}
The definition given above is an instance of \cite[2.2]{lrvcell} where their $\mathcal{K}$ is the category $K$ with morphisms and $\mathcal{M}$ is the class of pure embeddings. Observe that $( \mathcal{K}, \mathcal{M})$ might not be cellular in the sense of \cite{lrvcell} as $\mathcal{K}$ might not be cocomplete.
\end{remark}

Even without the hypothesis that $( \mathcal{K}, \mathcal{M})$ is cellular, one can show as in \cite{lrvcell} that $\dnf$ is a weakly stable independence relation in $\K$ under Hypothesis \ref{hyp2}. The key result is Corollary \ref{push}.

\begin{fact}[{\cite[2.7]{lrvcell}}]\label{wsta}
If $\K$ satisfies Hypothesis \ref{hyp2}, then $\dnf$ is a weakly stable independence relation. 
\end{fact}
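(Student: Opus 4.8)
The goal is to verify that the independence relation $\dnf$ from Definition \ref{indp} satisfies the four axioms making it \emph{weakly stable}: symmetry, existence, uniqueness, and transitivity. The plan is to follow the argument of \cite[2.7]{lrvcell}, checking at each point that the only structural input needed is that $K$ is closed under pushouts of pure embeddings (Corollary \ref{push}) together with the basic facts about pushouts of pure embeddings in $R$-Mod recorded in the preceding remark.

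First I would observe that the ambient category $R$-Mod is nice: pushouts exist, and by the remark preceding Lemma \ref{relation} the pushout of a span of pure embeddings again consists of pure embeddings. Thus for any span $(f_1\colon M\to N_1, f_2\colon M\to N_2)$ of $\K$-embeddings, the pushout $(P,g_1,g_2)$ computed in $R$-Mod has $g_1,g_2$ pure, and by Corollary \ref{push} we have $P\in K$, so $(P,g_1,g_2)$ is in fact a pushout inside $\K$. This immediately gives \emph{existence}: taking $Q=P$ and $t=\id_P$ witnesses that the canonical span always has an independent amalgam. For \emph{symmetry}, I would note that the definition of $\dnf$ is manifestly symmetric in the roles of $N_1$ and $N_2$ (swapping $f_1\leftrightarrow f_2$, $g_1\leftrightarrow g_2$, $h_1\leftrightarrow h_2$ permutes the diagram without changing the condition that $t$ be a pure embedding), so symmetry requires essentially no work beyond unwinding the diagram.

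For \emph{uniqueness} and \emph{transitivity} I would lean on the general category-theoretic machinery of \cite{lrv1}: the point of the formulation in Definition \ref{indp} via the comparison map $t\colon P\to Q$ is that $\dnf$-independence of $(f_1,f_2,h_1,h_2)$ is equivalent to the outer square $(h_1,h_2)$ factoring the pushout square through a pure embedding $t$. Uniqueness then reduces to the uniqueness of the pushout object up to the canonical isomorphism, and transitivity follows from the standard pasting lemma for pushout squares: stacking two pushout squares yields a pushout square, and a composite of pure embeddings is a pure embedding, so the comparison maps compose correctly. In each case the verification is formally identical to \cite[2.7]{lrvcell}; the only hypothesis of cellularity used there that could fail here is cocompleteness of $K$, but as the remark notes, all the diagrams that actually arise are finite pushouts, and these land back in $K$ by Corollary \ref{push}, so the arguments go through verbatim.

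The main obstacle, and the reason the statement is presented as an application of \cite[2.7]{lrvcell} rather than proved from scratch, is precisely the potential failure of cocompleteness of $K$: the cited framework assumes a cellular pair $(\mathcal{K},\mathcal{M})$, whereas here $K$ need not be cocomplete. The essential content of the proof is therefore to confirm that cocompleteness is never invoked beyond the existence of pushouts of $\mathcal{M}$-morphisms, and that Corollary \ref{push} supplies exactly this closure. Once that is checked, no new calculation is needed: the weak stability of $\dnf$ is inherited directly from the general theory. I would thus present the proof as a remark that the argument of \cite[2.7]{lrvcell} applies, citing Corollary \ref{push} as the replacement for the cellularity assumption at each of the four axioms.
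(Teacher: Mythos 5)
Your proposal is correct and takes essentially the same route as the paper: the paper states this as a Fact citing \cite[2.7]{lrvcell} and remarks only that the argument there goes through without cellularity because Corollary \ref{push} supplies closure under pushouts of pure embeddings, which is exactly the observation at the heart of your write-up. Your additional sketches of how each of the four axioms reduces to pushout formalities are consistent with the cited argument and add nothing that conflicts with the paper.
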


\begin{nota}
Given $\dnf$ an independence relation on an AEC, recall that one writes $M_1 \dnf^{N}_M M_2$ if $M \lea M_1, M_2 \lea N$ and $(i_1, i_2, j_1, j_2) \in \dnf$ where $i_1: M \to M_1, i_2:  M \to M_2,j_1: M_1 \to N, j_2: M_2 \to N$ are the inclusion maps. 
\end{nota}
The next result will be essential to describe the stability cardinals.

\begin{theorem}\label{LC}
If $\K$ satisfies Hypothesis \ref{hyp2}, then $\dnf$ has local character. More precisely, if $M_1, M_2 \leq_p N$, then there are $M_1', M_0 \in K$ such that $M_0 \leq_p M_1', M_2 \leq_p N$, $M_1 \leq_p M_1'$, $\| M_0 \| \leq \| M_1 \| + |R| + \aleph_0$ and $M'_1 \dnf^N_{M_0}  M_2$.
\end{theorem}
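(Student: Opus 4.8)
The plan is to give a direct module-theoretic construction inside $N$, after first reformulating the relation concretely. With $M_0 \subseteq M_1' \cap M_2$ and $M_0, M_1', M_2$ all $\leq_p N$, the pushout $P = (M_1' \oplus M_2)/\{(a,-a) : a \in M_0\}$ maps into $N$ by $t([(a,b)]) = a+b$, and an elementary computation shows that $t$ is injective exactly when $M_0 = M_1' \cap M_2$ (intersection computed in $N$), and that, given injectivity, $t$ is a pure embedding exactly when its image $M_1' + M_2$ is a pure submodule of $N$. Hence $M_1' \dnf^N_{M_0} M_2$ amounts to $M_0 = M_1' \cap M_2$ together with $M_1' + M_2 \leq_p N$ (and the ambient requirement $M_0 \leq_p M_1', M_2$). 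So it suffices to build a pure submodule $M_1 \leq_p M_1' \leq_p N$ with $\|M_1'\| \leq \kappa := \|M_1\| + |R| + \aleph_0$ satisfying $M_1' + M_2 \leq_p N$ and $M_1' \cap M_2 \leq_p M_2$, and then to set $M_0 := M_1' \cap M_2$. Indeed $\|M_0\| \leq \|M_1'\| \leq \kappa$; from $M_0 \leq_p M_2 \leq_p N$ transitivity of purity gives $M_0 \leq_p N$, and then the inclusions $M_0 \subseteq M_1' \leq_p N$ and $M_1 \subseteq M_1' \leq_p N$ (with $M_0, M_1 \leq_p N$) give $M_0 \leq_p M_1'$ and $M_1 \leq_p M_1'$ by the standard fact that $C \leq_p N$, $C \subseteq D \leq_p N$ imply $C \leq_p D$.

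To construct $M_1'$ I would build an increasing chain $M_1 = A_0 \subseteq A_1 \subseteq \cdots$ of submodules of $N$, each of size $\leq \kappa$, and put $M_1' = \bigcup_{n<\omega} A_n$. Writing $\pi : N \to N/M_2$ for the quotient map, at each step I close off three families of conditions: (i) for every finite system $\Sigma$ of $R$-linear equations with parameters in $A_n$ that is solvable in $N$, add a solution from $N$ to $A_{n+1}$; (ii) for every finite system with parameters in $\pi(A_n)$ that is solvable in $N/M_2$, lift a solution along $\pi$ and add the lifts to $A_{n+1}$; (iii) for every finite system with parameters in $A_n \cap M_2$ that is solvable in $M_2$, add a solution taken from $M_2$ to $A_{n+1}$. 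Since each equation is determined by finitely many elements of $R$ and finitely many parameters, there are at most $\kappa$ systems to process at each stage and each contributes finitely many new generators; thus $\|A_n\| \leq \kappa$ for all $n$, and $\|M_1'\| \leq \kappa$.

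Finally I would read off the four requirements from the chain conditions, using that any finite tuple and any given system appear at some stage and are witnessed at the next. Condition (i) yields $M_1' \leq_p N$; condition (iii) yields $M_1' \cap M_2 \leq_p M_2$ (the witnesses land in $A_{n+1}\cap M_2 \subseteq M_1'\cap M_2$). Condition (ii) yields $\pi(M_1') \leq_p N/M_2$; since $\pi(M_1') = (M_1'+M_2)/M_2$ and $M_2 \leq_p N$, the standard fact that purity transfers between $N$ and $N/M_2$ along the pure submodule $M_2$ — proved by solving a system modulo $M_2$ in $N/M_2$, then correcting the discrepancy, which lies in $M_2$, using $M_2 \leq_p N$ (cf. \cite{prest09}) — gives $M_1' + M_2 \leq_p N$. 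Since $M_1' \leq_p N \in K$ and $M_0 \leq_p M_2 \in K$ while $K$ is closed under pure submodules, both $M_1'$ and $M_0$ belong to $K$, completing the proof. The main obstacle is keeping the base $M_0$ small: closing directly for purity of $M_1'+M_2$ in $N$ would force processing parameters ranging over all of $M_2$, of which there may be more than $\kappa$. Passing to the quotient $N/M_2$ in condition (ii) is exactly what restricts the bookkeeping to $\pi(M_1')$ and so keeps $\|M_1'\|$, and hence $\|M_0\|$, bounded by $\kappa$.
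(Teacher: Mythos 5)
Your proof is correct, but it is organized rather differently from the paper's. The paper builds two interleaved $\omega$-chains: a chain $M'_{1,i}$ growing from $M_1$ inside $N$, and a chain $M_{0,i}$ inside $M_2$ that collects, for each finite tuple $\bar{a}\in M'_{1,i}$ and each $pp$-formula $\phi(\bar{x},\bar{y})$, a witness from $M_2$ to $N\vDash\exists\bar{y}\,\phi(\bar{a},\bar{y})$ whenever one exists; it then checks directly, by manipulating $pp$-formulas (e.g.\ $\exists w(\phi(w)\wedge w=z+z')$ and the fact that $pp$-definable sets are cosets of subgroups), that the induced map from the pushout over $M_0=\bigcup_i M_{0,i}$ into $N$ is a pure embedding. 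You instead first isolate a purely module-theoretic characterization of the relation --- for $M_0\leq_p M_1',M_2\leq_p N$, the pushout map is a pure embedding iff $M_0=M_1'\cap M_2$ and $M_1'+M_2\leq_p N$ --- and then arrange these conditions by a single closure chain, with the passage to $N/M_2$ doing the cardinality control that the paper achieves by indexing witnesses by pairs consisting of a tuple from $M'_{1,i}$ and a $pp$-formula. Both devices keep $\|M_0\|\leq\|M_1\|+|R|+\aleph_0$, and in fact the paper's $M_0$ also ends up equal to $M_1'\cap M_2$ (that is exactly what its injectivity computation shows), so the two constructions produce the same kind of object. Your route buys a reusable algebraic description of $\dnf$ over a common ambient model and a more modular verification, at the cost of invoking the standard transfer of purity along the quotient by $M_2$; the paper's route is more self-contained. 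Your condition (iii) is indeed indispensable --- the intersection of two pure submodules need not be pure --- and it is what makes $M_0\leq_p M_2$, hence $M_0\leq_p N$ and $M_0\leq_p M_1'$, go through.
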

\begin{proof}
Let $M_1, M_2 \leq_p N$. We build two increasing continuous chains $\{M_{0,i} : i < \omega \}$ and $\{M'_{1,i} : i < \omega \}$ such that:
\begin{enumerate}
\item $M'_{1,0} = M_1$.
\item $M_{0, i} \leq_p M'_{1, i+1}, M_2 \leq_p N$. 
\item $\| M_{0, i} \|, \| M'_{1,i} \| \leq \| M_1 \| + |R| + \aleph_0$.
\item If $\bar{a} \in M'_{1,i}$, $\phi(\bar{x}, \bar{y})$ is a $pp$-formula and there is $\bar{m} \in M_2$ such that $N \vDash \phi(\bar{a}, \bar{m})$, then there is $\bar{l}\in M_{0, i}$ such that $N \vDash \phi(\bar{a}, \bar{l})$. 
\end{enumerate}

\fbox{Construction} \underline{Base:} Let $M'_{1,0} = M_1$. For each $\bar{a} \in M_1$ and $\phi(\bar{x}, \bar{y})$ a $pp$-formula, if there is $\bar{m} \in M_2$ such that $N \vDash \phi(\bar{a}, \bar{m})$ let $\bar{m}_\phi^{\bar{a}}$ be a witness in $M_2$ and $\bar{0}$ otherwise. Let $M_{0,0}$ be the structure obtained by applying Downward L\"{o}wenheim-Skolem to $\bigcup \{ \bar{m}_\phi^{\bar{a}} : \bar{a} \in M_1 \text{ and } \phi \text{ is a }pp\text{-formula} \}$ in $M_2$. It is easy to see that $M_{0,0}$ satisfies what is needed.

\underline{Induction step:} Let $M'_{1, i+1}$ be the structured obtained by applying Downward L\"{o}wenheim-Skolem to $M_{0,i}$ in $N$. Construct $M_{0, i +1}$ as we constructed $M_{0,0}$, but replacing $M'_{1,0}$ by $M'_{1,i}$.

\fbox{Enough} Let $M_0= \bigcup_{ i < \omega } M_{0, i}$ and $M'_1 = \bigcup_{i < \omega } M'_{1,i}$. Observe that $\| M_0 \| \leq  \| M_1 \| + |R| + \aleph_0$ and we show that $M'_1 \dnf^N_{M_0}  M_2$.

Recall that the pushout in $R$-Mod is given by:

 \[
 \xymatrix{\ar @{} [dr] M'_1  \ar[r]  &  (M'_1 \oplus M_2)/ \{ (m, -m) : m \in M_0 \} \pullbackcorner \\
M_0 \ar[u] \ar[r] & M_2 \ar[u]
} 
\]

Moreover, $t:  (M'_1 \oplus M_2)/ \{ (m, -m) : m \in M_0 \} \to N$ is given by $t([(m, n)])= m + n$. So we are left to show that $t$ is a pure embedding.

We begin by proving that $t$ is injective, so assume that $m_1 + n_1 = m_2 + n_2$ with $m_i \in M'_1$ and $n_i \in M_2$ for $i \in \{1, 2\}$. Then $N \vDash x-y = z (m_1, m_2, n_2 -n_1)$, so by condition (4) of the construction there is $m \in M_0$ such that $N \vDash  x-y = z (m_1, m_2, m)$. Hence $[(m_1, n_1)] = [(m_2, n_2)]$ in the pushout. 

We show that $t$ is pure. Let $\phi(y)$ be a $pp$-formula such that $N \vDash  \phi(m+ n)$ with $m \in M_1'$ and $n \in M_2$. So $N\vDash \exists w  ( \phi(w) \wedge w = z + z') (m, n)$. 
Observe that this is a $pp$-formula, $m \in M'_1$ and $n \in M_2$, then by condition (4) of the construction there is $p \in M_0$ such that $N\vDash \exists w  ( \phi(w) \wedge w = z + z') (m, p)$. So $N \vDash \phi(m + p)$. Assume $\phi(y)$ is equal to $\exists \bar{x} \theta(\bar{x}, y)$ for  $\theta(\bar{x}, y)$ quantifier-free formula. Then as $M'_1 \leq_p N$ there is $\bar{m}^\star \in M'_1$ such that \begin{equation}
N \vDash \theta(\bar{m}^\star, m + p).
\end{equation}

As solutions to $pp$-formulas form a subgroup, it is easy to get that $N \vDash \phi(n-p)$. Then as $M_2 \leq_p N$ there is $\bar{n}^\star \in M_2$ such that

\begin{equation}
N \vDash \theta(\bar{n}^\star, n - p).
\end{equation}

So by adding equation (1) and (2) we obtain that:

\begin{equation}
N \vDash \theta(\bar{m}^\star + \bar{n}^\star, m + n).
\end{equation}

Therefore, $t:  (M'_1 \oplus M_2)/ \{ (m, -m) : m \in M_0 \} \to N$ is a pure embedding. \end{proof}

As presented in \cite[8.2]{lrv1}, it is possible to interpret an independence relation $\dnf$ as a relation on Galois-types.  

\begin{defin}

Given $M \leq_p N \in K $, $\bar{a} \in N$ and $B \subseteq N$, we say that $\gtp(\bar{a}/ B; N)$ does not fork over $M$ if and only if there are $M_1, M_2, N' \in K$ such that $\bar{a} \in M_1$, $B \subseteq M_2$, $N \leq_p N'$, $M \leq_p M_1, M_2 \leq_p N'$ and $M_1 \dnf^{N'}_M M_2$
\end{defin}

The next result has some of the properties that the independence relation defined in Definition \ref{indp} has when seen as a relation on Galois-types.

\begin{lemma}\label{lc2} Assume  $\K$ satisfies Hypothesis \ref{hyp2}. Then:
\begin{enumerate}
\item (Uniqueness) If $M \leq_p N$, $p, q \in \gS(N)$, $p, q$ do not fork over $M$ and $p\rest{M} = q\rest{M}$, then $p=q$.
\item (Local character) If $p \in \gS(M)$, then there is $N \leq_p M$ such that $p$ does not fork over $N$ and $\| N \| \leq |R| + \aleph_0$. 
\end{enumerate}
\end{lemma}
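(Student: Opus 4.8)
The plan is to prove both items by transporting the abstract independence facts about $\dnf$ (Fact~\ref{wsta} and Theorem~\ref{LC}) to the Galois-type level, following the translation set up in \cite[8.2]{lrv1}. Throughout I would work inside a large model obtained from amalgamation (Lemma~\ref{stru2}), so that all the relevant models sit as pure submodules of a common $N'\in K$ and all $\K$-embeddings can be taken to be inclusions.

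\emph{Uniqueness.} First I would unfold the definition of non-forking: since $p,q\in\gS(N)$ both do not fork over $M$, there are witnessing configurations $M_1\dnf^{N_1}_M M_2$ and $M_1^*\dnf^{N_2}_M M_2^*$ with $\bar a_p\in M_1$, $\bar a_q\in M_1^*$ realizing $p,q$ and $N\subseteq M_2,M_2^*$. Amalgamating, I may assume everything lives in one $N'$ with $N\leq_p N'$. The hypothesis $p\rest M=q\rest M$ gives that $\bar a_p$ and $\bar a_q$ have the same Galois-type over $M$, so after composing with an isomorphism fixing $M$ I can arrange $\bar a_p=\bar a_q=:\bar a$ sit over $M$ identically. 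The content is then that two non-forking extensions of the same type over $M$ to $N$ coincide. This is exactly the \emph{uniqueness} property of the weakly stable independence relation $\dnf$ (one of the four axioms bundled in Fact~\ref{wsta}), applied to the two commutative squares expressing $\bar a\dnf^{N'}_M N$; uniqueness of $\dnf$ says the two resulting amalgams of $(M_1,N)$ over $M$ induce the same Galois-type, i.e.\ $p=q$. I would cite \cite[8.2]{lrv1} for the precise statement that weak stability of $\dnf$ yields uniqueness of non-forking at the level of Galois-types.

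\emph{Local character.} Given $p=\gtp(\bar a/M;N)$ with $M\leq_p N$, I would apply Theorem~\ref{LC} to the pair $M_1,M_2$ where $M_1$ is a small pure submodule of $N$ containing $\bar a$ (obtained by Downward L\"owenheim--Skolem, so $\|M_1\|\leq|R|+\aleph_0$) and $M_2:=M$. Theorem~\ref{LC} produces $M_0,M_1'$ with $M_0\leq_p M_1',M\leq_p N$, $M_1\leq_p M_1'$, $\|M_0\|\leq\|M_1\|+|R|+\aleph_0\leq|R|+\aleph_0$, and $M_1'\dnf^N_{M_0}M$. Since $\bar a\in M_1\leq_p M_1'$ and $M\leq_p N$, this configuration witnesses precisely that $\gtp(\bar a/M;N)$ does not fork over $M_0$, by the definition of non-forking on Galois-types. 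Setting $N:=M_0$ (renaming) gives the required pure submodule of cardinality $\leq|R|+\aleph_0$ over which $p$ does not fork.

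The main obstacle I anticipate is bookkeeping in the uniqueness part: the abstract uniqueness axiom of $\dnf$ is phrased for commutative squares, and one must be careful that ``$p\rest M=q\rest M$'' together with the two non-forking witnesses really assembles into the hypotheses of that axiom inside a single amalgam $N'$, rather than just two a priori unrelated diagrams. Getting the base model and the realizations genuinely identified over $M$ — using amalgamation and the invariance of $\dnf$ under the equivalences in its definition — is where care is needed; once the two squares share the same base and the same image of $\bar a$, uniqueness of $\dnf$ closes the argument. The local character part is comparatively routine, being essentially a repackaging of Theorem~\ref{LC}.
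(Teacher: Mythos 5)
Your proposal is correct and follows essentially the same route as the paper: the paper's proof simply cites Fact~\ref{wsta} together with \cite[8.5]{lrv1} for uniqueness (the lemma there carrying out exactly the square-to-Galois-type translation you describe) and derives local character directly from Theorem~\ref{LC} as you do. The details you supply are the content of those citations, so no gap.
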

\begin{proof}
(1) follows from Fact \ref{wsta} and \cite[8.5]{lrv1}. As for (2), this follows from Theorem \ref{LC}.
\end{proof}

With this we obtain the main result of this section. The proof given is the \emph{standard proof}, but we present the argument for the convenience of the reader.

\begin{theorem}\label{statf} Assume $\K$ satisfies Hypothesis \ref{hyp2}.
 If $\lambda^{|R| + \aleph_0}=\lambda$, then $\K$ is $\lambda$-stable.
\end{theorem}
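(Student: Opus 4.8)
The plan is to run the \emph{standard counting argument} for stability using the forking relation on Galois-types supplied by Lemma \ref{lc2}. The intuition is that local character lets us reduce every type over a model of size $\lambda$ to a type over a small submodule of size at most $|R|+\aleph_0$, and then uniqueness (together with pp-quantifier elimination for complete first-order theories of modules) controls how many ways such a small type can be extended. The only genuinely quantitative input is the stability of complete theories of modules from \cite[3.1]{prest}, applied at the small parameter $|R|+\aleph_0$.

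First I would fix $\lambda$ with $\lambda^{|R|+\aleph_0}=\lambda$ and $M\in\K_\lambda$; note $\LS(\K)=|R|+\aleph_0\le\lambda$ by Lemma \ref{stru2}, so such $M$ exist. For each $p\in\gS_\K(M)$, Lemma \ref{lc2}(2) gives a submodule $N_p\leq_p M$ with $\|N_p\|\le|R|+\aleph_0$ over which $p$ does not fork. I would then define a map sending $p$ to the pair $(N_p,\,p\!\rest{N_p})$. The key claim is that this map is injective: if $p,q\in\gS_\K(M)$ have $N_p=N_q=:N_0$ and $p\!\rest{N_0}=q\!\rest{N_0}$, then since both $p$ and $q$ do not fork over $N_0$ and agree on $N_0$, uniqueness (Lemma \ref{lc2}(1)) forces $p=q$. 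So $|\gS_\K(M)|$ is bounded by the number of such pairs.

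Next I would count the pairs. The number of candidate submodules $N_0\leq_p M$ of cardinality at most $|R|+\aleph_0$ is at most $\lambda^{|R|+\aleph_0}=\lambda$, since each is determined by a subset of $|M|$ of that size. For each fixed $N_0$ of size $\mu:=|R|+\aleph_0$, I must bound the number of Galois-types $p\!\rest{N_0}\in\gS_\K(N_0)$. Here I would invoke Theorem \ref{gstat}'s mechanism, or rather its proof: the number of Galois-types over a model of size $\mu$ is controlled by $\sum_{j}|S^{Th(N_j)}(N_0)|$ over the at most $2^{|R|+\aleph_0}$ complete theories, and each complete theory of modules is $\mu^{+}$-stable-type bounded via \cite[3.1]{prest}, giving a bound of $(2^{|R|+\aleph_0})\cdot(|R|+\aleph_0)^{+}$-many, which is at most $2^{|R|+\aleph_0}\le\lambda$. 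Multiplying, $|\gS_\K(M)|\le\lambda\cdot 2^{|R|+\aleph_0}=\lambda$, so $\K$ is $\lambda$-stable.

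The step I expect to be the main obstacle is the careful bookkeeping in bounding the number of small types $p\!\rest{N_0}$: one must be sure that Galois-types over the small model $N_0$ are genuinely controlled, and since I do \emph{not} have pp-syntacticity of Galois-types available under Hypothesis \ref{hyp2} (unlike Hypothesis \ref{hyp1}), I cannot simply quote Theorem \ref{gstat}. The honest route is to bound $|\gS_\K(N_0)|$ directly by $\lambda$ using the fact that $\|N_0\|\le|R|+\aleph_0\le\lambda$ and $\lambda^{|R|+\aleph_0}=\lambda$; a cleaner alternative is to note that the injection $p\mapsto(N_p,p\!\rest{N_p})$ already reduces the count to $\lambda\cdot\sup\{|\gS_\K(N_0)|:\|N_0\|\le|R|+\aleph_0\}$, and then to bound the supremum by counting representatives $(\bar a,N')$ of types over $N_0$ up to the equivalence $E^\K$, of which there are at most $\lambda$. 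I would make sure the final arithmetic uses only $\lambda^{|R|+\aleph_0}=\lambda$ and does not secretly require $\lambda\ge 2^{|R|+\aleph_0}$.
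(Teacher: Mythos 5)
Your proposal is correct and is essentially the paper's own proof: both run the standard counting argument from the local character and uniqueness properties in Lemma \ref{lc2}, the paper phrasing it as a contradiction via two applications of the pigeonhole principle where you phrase it as an injection $p\mapsto(N_p,\,p\rest{N_p})$. Your final paragraph rightly discards the detour through Theorem \ref{gstat} (pp-syntacticity is indeed not available under Hypothesis \ref{hyp2}), and the direct count of representatives $(\bar a,N')$ over the small model --- at most $2^{|R|+\aleph_0}\le\lambda^{|R|+\aleph_0}=\lambda$ after applying downward L\"{o}wenheim--Skolem --- correctly supplies the one step the paper leaves implicit; note also that $\lambda\ge 2^{|R|+\aleph_0}$ follows automatically from $\lambda^{|R|+\aleph_0}=\lambda$, so your closing worry is moot.
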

\begin{proof}
Let $M \in \K_\lambda$ with $\lambda^{|R| + \aleph_0}=\lambda$. Assume for the sake of contradiction that $| \gS(M) | > \lambda$ and let $\{ p_i : i < \lambda^+ \}$ be an enumerations without repetitions of types in $\gS(M)$. 

By Lemma \ref{lc2}, for every $ i < \lambda^+$, there is $N_i \leq_p M$ such that $p_i$ does not fork over $N_i$ and $\| N_i \| = |R| + \aleph_0$. Then by the pigeon hole principle and using that $\lambda^{|R| + \aleph_0}=\lambda$, we may assume that there is an $N \in K$ such that $N_i = N$ for every $i < \lambda^+$. Therefore, by uniqueness, there are $i \neq j < \lambda^+$ such that $p_i = p_j$. This is clearly a contradiction. 
\end{proof} 

The following improves the results of \cite{lrvcell} where it is shown that the class of flat modules with pure embeddings is stable by giving a cardinal arithmetic hypotesis which implies stability. It also extends \cite[4.6]{mazf} where the same result is obtained for those rings such that the pure-injective envelope of every flat module is flat. 

\begin{cor}
If $\lambda^{|R|+\aleph_0}= \lambda$, then $(R\text{-Flat}, \leq_{p})$ is $\lambda$-stable.
\end{cor}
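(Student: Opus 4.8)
The plan is to read this off as an immediate specialization of Theorem \ref{statf}, so the only thing I need to establish is that $(R\text{-Flat}, \leq_p)$ satisfies Hypothesis \ref{hyp2}. This verification has in fact already been carried out in Example \ref{ex2}.(1): the class of flat left $R$-modules is an $F$-class, and by \cite[2.3]{hero} every $F$-class is closed under direct sums, pure submodules, and pure epimorphic images. Since flat modules are closed under direct limits and the pure-submodule relation is tested by finite tuples, $(R\text{-Flat}, \leq_p)$ is an AEC, so all three closure conditions of Hypothesis \ref{hyp2} are met.

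If one prefers not to invoke the $F$-class formalism, I would instead check the three conditions directly from standard homological facts. A direct sum of flat modules is flat, giving closure under direct sums. For the other two, the key classical fact is that in a pure-exact sequence $0 \to A \to B \to C \to 0$ with $B$ flat, both the pure submodule $A$ and the pure epimorphic image $C$ are again flat; these yield closure under pure submodules and under pure epimorphic images respectively.

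Once Hypothesis \ref{hyp2} is in hand for $(R\text{-Flat}, \leq_p)$, the conclusion follows at once from Theorem \ref{statf}, which asserts $\lambda$-stability for any AEC under Hypothesis \ref{hyp2} whenever $\lambda^{|R| + \aleph_0} = \lambda$. I do not anticipate any genuine obstacle, since every substantive ingredient, namely the weakly stable independence relation of Definition \ref{indp}, its local character (Theorem \ref{LC}) and the resulting counting argument (Theorem \ref{statf}), has already been proved at the level of the abstract hypothesis. The corollary is simply the instantiation of that machinery at the leading example of flat modules.
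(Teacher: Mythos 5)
Your proposal is correct and follows exactly the paper's route: the corollary is stated there as an immediate instance of Theorem \ref{statf}, with the verification of Hypothesis \ref{hyp2} for flat modules already supplied by Example \ref{ex2}.(1) via the $F$-class observation of \cite[2.3]{hero}. Your alternative direct verification (pure submodules and pure epimorphic images of flat modules are flat) is also standard and sound, so nothing is missing.
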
 

Moreover, by Theorem \ref{statf} and \cite[3.20]{kuma} we can conclude the existence of universal models.

\begin{cor} Assume $\K$ satisfies Hypothesis \ref{hyp2}.
 If $\lambda^{|R| + \aleph_0}=\lambda$ or $\forall \mu < \lambda( \mu^{|R| + \aleph_0} <
\lambda)$, then $\K$ has a universal model of cardinality $\lambda$.
\end{cor}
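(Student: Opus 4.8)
The plan is to invoke the general existence theorem for universal models \cite[3.20]{kuma}, exactly as was done for Hypothesis \ref{hyp1} in Corollary \ref{universal2}. That theorem produces a universal model of cardinality $\lambda$ in any AEC that has amalgamation, joint embedding, and no maximal models, provided a suitable stability input is available in the relevant cardinals under one of the two stated arithmetic hypotheses. So the first step is to check the structural prerequisites: by Lemma \ref{stru2}, under Hypothesis \ref{hyp2} the class $\K$ has joint embedding, amalgamation, no maximal models, and $\LS(\K) = |R| + \aleph_0$. These are precisely the running assumptions of \cite[3.20]{kuma}.

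It then remains to feed in stability. In the case $\lambda^{|R|+\aleph_0} = \lambda$, Theorem \ref{statf} gives that $\K$ is $\lambda$-stable, so a $(\lambda, \alpha)$-limit model of cardinality $\lambda$ exists by \cite[\S II.1.16]{shelahaecbook} (the required amalgamation, joint embedding, no maximal models, and $\lambda$-stability all being in hand); by the fact recalled in Subsection 2.1, any such limit model is universal in $\K_\lambda$, and we are done. In the case $\forall \mu < \lambda(\mu^{|R|+\aleph_0} < \lambda)$, the cardinal $\lambda$ itself may fail the stability arithmetic, so instead I would construct the universal model as a continuous increasing union $\bigcup_{i} M_i$ in which each successor model is chosen universal over its predecessor and absorbs a previously listed model of cardinality below $\lambda$. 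The arithmetic hypothesis supplies cofinally many $\mu < \lambda$ with $\mu^{|R|+\aleph_0} = \mu$, at which $\K$ is stable by Theorem \ref{statf}, so such universal extensions exist at every stage; the union then has cardinality $\lambda$ and embeds every member of $\K_\lambda$.

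The only delicate point is the second case: one must organize the union so that every model of cardinality $\lambda$ is eventually embedded while keeping each stage of cardinality below $\lambda$, which is exactly where the hypothesis $\mu^{|R|+\aleph_0} < \lambda$ is used to bound the number of models to amalgamate in at each step. Since \cite[3.20]{kuma} already carries out this bookkeeping under precisely the structural assumptions verified above, the cleanest execution is simply to confirm those hypotheses and cite the theorem, rather than to reproduce the construction.
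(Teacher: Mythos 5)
Your proposal is correct and takes essentially the same route as the paper, whose entire proof is the one-line deduction from Theorem \ref{statf} together with \cite[3.20]{kuma}, with the structural prerequisites (joint embedding, amalgamation, no maximal models, $\LS(\K)=|R|+\aleph_0$) supplied by Lemma \ref{stru2}. Your additional sketch of the second arithmetic case just reproduces the bookkeeping already done in \cite[3.20]{kuma}, so citing that result, as you ultimately do, suffices.
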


\begin{remark}
The above result applied to the class of flat modules extends \cite[4.6]{mazf} which in turn extended \cite[1.2]{sh820}. On the other hand, the above result applied to the class of $\s$-torsion modules extends \cite[4.6]{maztor}. 
\end{remark} 

Another result that follows from having an independence relation is that classes satisfying Hypothesis \ref{hyp2} are tame. 

\begin{lemma}
If $\K$ satisfies Hypothesis \ref{hyp2}, then $\K$ is $(|R| + \aleph_0)$-tame. 
\end{lemma}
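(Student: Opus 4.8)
The plan is to deduce tameness directly from the existence of the weakly stable independence relation $\dnf$, using its local character and uniqueness properties as packaged in Lemma \ref{lc2}. The target statement says: for $M \in \K$ and distinct $p \neq q \in \gS(M)$, there is $A \subseteq |M|$ with $|A| \leq |R| + \aleph_0$ such that $p\rest{A} \neq q\rest{A}$. (Here I am reading $(|R|+\aleph_0)$-tameness in the standard sense where types over a model are determined by their restrictions to subsets of size $\leq |R|+\aleph_0$.)

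First I would take $p \neq q \in \gS(M)$ and invoke local character (Lemma \ref{lc2}(2)) for each: there are $N_p, N_q \leq_p M$, each of cardinality $\leq |R| + \aleph_0$, such that $p$ does not fork over $N_p$ and $q$ does not fork over $N_q$. Set $A = |N_p| \cup |N_q|$, so $|A| \leq |R| + \aleph_0$. The goal is to show $p\rest{A} \neq q\rest{A}$; equivalently, I argue contrapositively that $p\rest{A} = q\rest{A}$ forces $p = q$. Since the relation admits an ambient model over which nonforking is witnessed, I would want to find $N_0 \leq_p M$ with $A \subseteq |N_0|$ and $\|N_0\| \leq |R|+\aleph_0$ (via Downward L\"owenheim--Skolem), over which \emph{both} $p$ and $q$ do not fork; this uses monotonicity/transitivity of the weakly stable relation (transitivity and the base-monotonicity that come bundled with weak stability via Fact \ref{wsta}) to enlarge the base from $N_p, N_q$ up to $N_0$ while preserving nonforking.

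The key step is then the uniqueness clause (Lemma \ref{lc2}(1)): with $N_0 \leq_p M$, if $p, q \in \gS(M)$ both do not fork over $N_0$ and $p\rest{N_0} = q\rest{N_0}$, then $p = q$. Since $A \subseteq |N_0|$, the assumption $p\rest{A} = q\rest{A}$ together with the fact that $p\rest{N_0}$ is determined by forking over $N_0$ from $p\rest{A}$ (again by uniqueness applied over the smaller base) gives $p\rest{N_0} = q\rest{N_0}$, and uniqueness concludes $p = q$, contradicting $p \neq q$. Hence $p\rest{A} \neq q\rest{A}$, which is exactly tameness with the required bound on $|A|$.

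The main obstacle I anticipate is the bookkeeping needed to pass from the two separate bases $N_p$ and $N_q$ to a single common base $N_0$ of the right size while retaining nonforking of both types — this requires the monotonicity of $\dnf$ under enlarging the base set, and care that the Downward L\"owenheim--Skolem axiom produces a genuine $\K$-substructure $N_0$ containing $A$ with $A \subseteq |N_0| \subseteq |M|$. Once nonforking over the common $N_0$ is secured, the argument is a routine application of uniqueness, so the crux is entirely in correctly combining local character and the monotonicity properties of the weakly stable independence relation guaranteed by Fact \ref{wsta}.
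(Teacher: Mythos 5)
Your overall strategy --- deducing tameness from local character plus uniqueness of the nonforking relation --- is exactly what the paper does: its proof is the one-line citation of Lemma \ref{lc2} together with \cite[8.16]{lrv1}, and your write-up is essentially an attempt to unpack that citation. However, there is a genuine gap at the step you yourself identify as the crux: passing from the two separate bases $N_p$ and $N_q$ to a single common base $N_0$ over which \emph{both} $p$ and $q$ do not fork. You justify this by appealing to ``base-monotonicity that comes bundled with weak stability via Fact \ref{wsta}'', but weak stability as defined in this paper (and in \cite{lrv1}) consists of symmetry, existence, uniqueness and transitivity only; base monotonicity is not among these properties, is not proved anywhere in the paper, and does not follow formally from the weakly stable axioms. (For the concrete pushout relation of Definition \ref{indp} it is not even clear how to formulate it: enlarging the base $M_0$ to some $M_0' \leq_p M_2$ changes the pushout and forces one to replace $M_1$ as well.) So as written the argument does not go through.

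The gap is repairable without any base monotonicity, and the repair stays within what the paper actually proves. Given $p = \gtp(\bar a/M;N_1) \neq q = \gtp(\bar b/M;N_2)$, first amalgamate $N_1$ and $N_2$ over $M$ (Lemma \ref{stru2}) so that both types are realized in a single $N$ with $M \leq_p N$. Now apply local character (Theorem \ref{LC}) \emph{once}, taking $M_1 \leq_p N$ to be a small model containing both $\bar a$ and $\bar b$ and $M_2 = M$: this produces a single $M_0 \leq_p M$ with $\| M_0 \| \leq |R|+\aleph_0$ and $M_1' \dnf^{N}_{M_0} M$ for some $M_1' \geq_p M_1$. Since nonforking of a Galois type is defined existentially (it asks only for \emph{some} witness $M_1'$ containing the realization), this one square witnesses simultaneously that $p$ and $q$ do not fork over $M_0$; the only observation needed is that a witness for $\bar a \bar b$ is a witness for $\bar a$ and for $\bar b$ separately. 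Taking $A = |M_0|$, if $p\rest{A} = q\rest{A}$ then uniqueness (Lemma \ref{lc2}(1)) gives $p = q$, a contradiction; hence $p\rest{A} \neq q\rest{A}$ with $|A| \leq |R|+\aleph_0$, as required.
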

\begin{proof}
Follows from Lemma \ref{lc2} and \cite[8.16]{lrv1}.
\end{proof}

Since $K$ has joint embedding, amalgamation and no maximal models, it follows from \cite[\S II.1.16]{shelahaecbook} that $\K$ has a $(\lambda, \alpha)$-limit model if $\lambda^{|R| + \aleph_0}=\lambda$ and $\alpha < \lambda^+$ is a limit ordinal. For classes satisfying Hypothesis \ref{hyp2}, we do not know how limit models look like in general or if there is even a general theory as the one under Hypothesis \ref{hyp1}. For the specific class of flat modules, it was shown that long limit models are cotorsion modules in \cite[3.5]{mazf}. 

Since we were not able to characterize limit models, we are not able to characterize superstability for classes satisfying Hypothesis \ref{hyp2}. Again, for the class of flat modules this was done in \cite{mazf}. There it was shown that the class of flat left $R$-modules is superstable if and only if $R$ is left perfect.

We are not sure if it is possible to obtain a result as Theorem \ref{main} for classes satisfying Hypothesis \ref{hyp2}, but we think that characterizing superstability in the class of $\s$-torsion $R$-modules will have interesting algebraic consequences.

\subsection{Classes satisfying Hypotheses \ref{hyp1} and \ref{hyp2}}  We briefly study those classes that satisfy Hypotheses \ref{hyp1} and \ref{hyp2}. Recall that definable classes and Example \ref{ex1}.(5) are examples of classes satisfying both hypotheses.

\begin{lemma}\label{easy}
If $\K$ satisfies Hypotheses \ref{hyp1} and \ref{hyp2}, then $\dnf$ has the $(<\aleph_0)$-witness property. Moreover, $\dnf$ is a stable independence relation.
\end{lemma}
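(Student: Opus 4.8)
The plan is to establish the $(<\aleph_0)$-witness property for $\dnf$; the ``moreover'' part is then immediate, since by Fact \ref{wsta} the relation is weakly stable and by Theorem \ref{LC} it has local character, and a weakly stable independence relation with local character and the witness property is by definition a stable independence relation. Hypothesis \ref{hyp1} enters only to guarantee, via Lemma \ref{pp=gtp}, that Galois-types in $\K$ are $pp$-syntactic; this is what lets me pass freely between $\dnf$ read off commutative squares and $\dnf$ read off Galois-types in the sense of \cite[8.7]{lrv1}, and ensures that the relation is controlled entirely by finitary $pp$-data.

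First I would recall that $M_1 \dnf^N_M M_2$ means exactly that the canonical map $t \colon P \to N$ from the pushout $P = (M_1 \oplus M_2)/\{(m,-m) : m \in M\}$, given by $t([(m_1,m_2)]) = m_1 + m_2$, is a pure embedding. The key observation is that purity of $t$ is a $pp$-local condition and therefore reflects from pieces of $M_1$ and $M_2$ that are finitely generated over $M$. Concretely, given a $pp$-formula $\psi(\bar{z})$ and a tuple $\bar{p} \in P$ with $N \vDash \psi(t(\bar{p}))$, the representatives of $\bar{p}$ involve only finitely many elements $\bar{a} \subseteq M_1$ and $\bar{c} \subseteq M_2$; using Downward L\"{o}wenheim--Skolem together with closure under pure submodules (Hypothesis \ref{hyp2}) I can enclose $M \cup \bar{a}$ in a pure submodule $A \leq_p M_1$ and $M \cup \bar{c}$ in a pure submodule $B \leq_p M_2$ with $A, B \in K$, both finitely generated over $M$, so that these are legitimate witnessing sub-configurations.

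The heart of the argument is then the following reflection. By the hypothesis of the witness property we have $A \dnf^N_M B$, so the corresponding map $t_{A,B}$ from $(A \oplus B)/\{(m,-m):m\in M\}$ to $N$ is a pure embedding; since $\bar{p}$ lifts to a tuple $\bar{p}'$ over $A \oplus B$ with $t_{A,B}(\bar{p}') = t(\bar{p})$, purity of $t_{A,B}$ gives $(A \oplus B)/\{(m,-m):m\in M\} \vDash \psi(\bar{p}')$. Pushing this forward along the canonical homomorphism into the global pushout $P$ induced by the inclusions $A \hookrightarrow M_1$ and $B \hookrightarrow M_2$, and using that $pp$-formulas are preserved by arbitrary homomorphisms, I obtain $P \vDash \psi(\bar{p})$. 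Taking $\psi(\bar{z})$ to be $\bar{z} = \bar{0}$ yields injectivity of $t$, and the general case yields that $t$ reflects $pp$-formulas; hence $t$ is a pure embedding and $M_1 \dnf^N_M M_2$. This is precisely the $(<\aleph_0)$-witness property.

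I expect the main obstacle to be the bookkeeping around the pushouts and the verification that purity genuinely reflects along the canonical map into $P$: one must check that the enclosing modules $A,B$ stay inside $K$ (this is where closure under pure submodules is essential) and that the induced map into the global pushout sends the local witness $\bar{p}'$ to $\bar{p}$ while preserving the relevant $pp$-formula. Matching this finitary reflection with the formulation of the witness property at the level of Galois-types in \cite[8.7]{lrv1} is where $pp$-syntacticity of types (Lemma \ref{pp=gtp}, hence Hypothesis \ref{hyp1}) is invoked, guaranteeing that forking is governed by $pp$-data each involving only finitely many parameters. With the witness property in hand, combining Fact \ref{wsta} and Theorem \ref{LC} gives that $\dnf$ is a stable independence relation.
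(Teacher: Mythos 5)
Your overall architecture for the ``moreover'' part matches the paper's: witness property plus Fact \ref{wsta} plus Theorem \ref{LC} yields a stable independence relation, and that part of your write-up is fine. For the witness property itself the paper argues quite differently: it deduces full $(<\aleph_0)$-tameness from $pp$-syntacticity (Corollary \ref{fullt}, which is where Hypothesis \ref{hyp1} enters) and then simply quotes \cite[8.8, 8.9]{lrv1}, which give the $(<\aleph_0)$-witness property for a weakly stable independence relation in a fully $(<\aleph_0)$-tame class. You instead attempt a direct algebraic verification. The algebraic core of what you write is correct as far as it goes: $P=(M_1\oplus M_2)/\{(m,-m):m\in M\}$ is the directed union of the sub-pushouts $(A\oplus B)/\{(m,-m):m\in M\}$, the induced maps into $P$ are injective homomorphisms, $pp$-formulas are preserved by homomorphisms, and hence if $t_{A,B}$ is a pure embedding for a cofinal directed family of pairs $(A,B)$ then $t$ is a pure embedding.

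The genuine gap is the step ``by the hypothesis of the witness property we have $A \dnf^N_M B$.'' The $(<\aleph_0)$-witness property of \cite[8.7]{lrv1} has as its hypothesis that certain Galois types over sets that are \emph{finite} over the base do not fork, and non-forking of a Galois type is an existential statement: \emph{some} pair of models containing the relevant sets, inside \emph{some} extension of $N$, forms an independent square. It does not directly assert that the particular pure submodules $A\le_p M_1$ and $B\le_p M_2$ you produce by Downward L\"{o}wenheim--Skolem --- which have cardinality up to $\|M\|+|R|+\aleph_0$ and are in general \emph{not} ``finitely generated over $M$'' as you claim --- satisfy $A\dnf^N_M B$ inside $N$. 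Converting the type-level hypothesis into the square-level statement for your chosen $A,B$ is precisely the content of \S 8 of \cite{lrv1}, and it is there, not in the pushout algebra, that tameness (hence Hypothesis \ref{hyp1}) is genuinely needed; your proposal defers this to ``$pp$-syntacticity'' without detail. A useful sanity check: every ingredient of your algebraic reflection uses only Hypothesis \ref{hyp2}, so if the deferred bridge were routine, your argument would yield a stable independence relation under Hypothesis \ref{hyp2} alone --- but that is exactly Question \ref{qq}, which the paper records as open. So the step you postpone is where all the difficulty lives, and as written the proof does not close it.
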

\begin{proof}
By Corollary \ref{fullt} we have $\K$ is fully $(< \aleph_0)$-tame. Then it follows from \cite[8.8, 8.9]{lrv1} that $\dnf$ has the $(<\aleph_0)$-witness property. The moreover part follows from Fact \ref{wsta} and Theorem \ref{LC}.
\end{proof}

A natural question to ask is if the above results follows from Hypothesis \ref{hyp2}. 

\begin{question}\label{qq}
If $\K$ satisfy Hypothesis \ref{hyp2}, is $\dnf$ a stable independence relation?
\end{question}

\begin{remark}
In the case of $p$-groups and torsion groups this is the case by \cite[3.4, 4.5]{maztor}, Lemma \ref{pp=gtp} and doing a similar argument as that of Lemma \ref{easy}.\footnote{After the original submission of this paper, the same result was obtained in \cite[\S 3]{lrv21} using completely different methods. There it is shown that many classes of modules have a stable independence relation. Nevertheless Question \ref{qq} is still open.}
\end{remark} 

The next assertion follows from the previous lemma and \cite[3.1]{lrvcell}. For the notions not defined in this paper, the reader can consult \cite{lrvcell}.

\begin{cor}
Pure embeddings are cofibrantly generated in the class of $R$-modules, i.e., they are generated from a set of morphisms by pushouts, transfinite composition and retracts.
\end{cor}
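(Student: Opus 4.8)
The plan is to specialize Lemma \ref{easy} to the AEC of all $R$-modules and then invoke \cite[3.1]{lrvcell}. First I would note that $(R\text{-Mod}, \leq_p)$ satisfies both Hypothesis \ref{hyp1} and Hypothesis \ref{hyp2}: since every $R$-module belongs to the class, closure under direct sums, pure submodules, pure epimorphic images, and pure-injective envelopes all hold trivially. Hence Lemma \ref{easy} applies and yields that the independence relation $\dnf$ of Definition \ref{indp} is a stable independence relation on $R\text{-Mod}$. As in the remark preceding Fact \ref{wsta}, this is to be read as the relation on the category of $R$-modules (with all module homomorphisms as arrows) determined by the class $\mathcal{M}$ of pure embeddings.

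Next I would observe that, unlike the situation for a proper subclass $K$ flagged in that same remark, the full category $R\text{-Mod}$ is cocomplete, so the pair $(R\text{-Mod}, \mathcal{M})$ is genuinely cellular in the sense of \cite{lrvcell} and \cite[3.1]{lrvcell} applies without caveat. That result asserts that a cellular category carrying a stable independence relation has its distinguished class of morphisms cofibrantly generated. Applying it with $\mathcal{M}$ the class of pure embeddings gives exactly the desired conclusion: pure embeddings in $R\text{-Mod}$ are generated from a set of morphisms by pushouts, transfinite composition, and retracts.

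The step requiring the most attention is the passage from the weakly-stable framework used in the body of this section to the stable-independence input demanded by \cite[3.1]{lrvcell}, together with the verification that the cellularity hypotheses of that theorem are met for the full module category. Concretely, one must confirm that $R\text{-Mod}$ with pure embeddings forms a cellular category (cocompleteness, the requisite closure properties of $\mathcal{M}$, and accessibility of both $R\text{-Mod}$ and $\mathcal{M}$), since throughout the section we only relied on the weaker weakly-stable-independence machinery that tolerated the possible failure of cellularity for subclasses. Once the full category is seen to be cellular and $\dnf$ is known to be stable via Lemma \ref{easy}, the cited theorem delivers the corollary immediately.
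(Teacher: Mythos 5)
Your proposal is correct and follows essentially the same route as the paper: one observes that $(R\text{-Mod},\leq_p)$ trivially satisfies Hypotheses \ref{hyp1} and \ref{hyp2}, applies Lemma \ref{easy} to obtain a stable independence relation, checks that $R$-Mod with pure embeddings is an accessible cellular category (retract-closed, coherent, $\aleph_0$-continuous), and then invokes \cite[3.1]{lrvcell}. Your extra emphasis on why cocompleteness of the full module category restores cellularity (which could fail for proper subclasses) is a sound and welcome clarification of a point the paper passes over quickly.
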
 
\begin{proof}
Observe that the class of left $R$-modules with pure embeddings satisfies Hypotheses \ref{hyp1} and \ref{hyp2}, then by Lemma \ref{easy} $\dnf$ is a stable independence relation. Since $R$-Mod with pure embeddings is an accessible cellular category which is retract-closed, coherent and $\aleph_0$-continuous. Therefore, pure embeddings are cofibrantly generated by \cite[3.1]{lrvcell}. 
\end{proof}

\begin{remark}
The main result of \cite{lprv} is that the above result holds in locally finitely accessible additive categories. Their proof is very different from our proof as they use categorical methods. 
\end{remark}

\section{Classes that admit intersections}

In this section we study classes that admit intersections and their subclasses. We use the ideas of this section to provide a partial solution to Question \ref{mainq} for AECs of torsion-free abelian groups. Moreover, we give a condition that implies a positive solution to Question \ref{mainq}.

\begin{defin}
Let $\K= (K, \leq_p)$ and $\K^\star=(K^\star, \leq_p)$ be a pair of AECs with $K, K^\star \subseteq R\text{-Mod}$ for a fixed ring $R$. We say $\K^\star$ is \emph{closed below} $\K$ if the following hold:
\begin{enumerate}
\item $K^\star \subseteq K$.
\item $K$ and $K^\star$ are closed under pure submodules.
\item $\K$ admits intersections, i.e., for every $N \in K$ and $A \subseteq |N|$ we have that $cl^{N}_{\K}(A)=\bigcap\{M \leq_p N : A \subseteq |M|\} \in K$ and  $cl^{N}_{\K}(A) \leq_p N$.\footnote{Classes admitting intersections were introduced in \cite[1.2]{BSh} and studied in detail in \cite[$\S$2]{vaseyu}.}
\end{enumerate}
\end{defin}

\begin{example}\label{ex3}
The following classes are all closed below the class of torsion-free groups with pure embeddings:
\begin{enumerate}
\item $(TF, \leq_p)$ where $TF$ is the class of torsion-free groups. A group $G$ is torsion-free if every element has infinite order. 
\item $(RTF, \leq_p)$ where \text{RTF} is the class of reduced torsion-free abelian groups. A group $G$ is reduced if it does not have non-trivial divisible subgroups. 
\item $(\aleph_1\text{-free}, \leq_p)$ where $\aleph_1\text{-free}$ is the class of $\aleph_1\text{-free}$ groups. A group $G$ is $\aleph_1\text{-free}$ if every countable subgroups is free. 
\item $(B_0, \leq_p)$ where $B_0$ is the class of finitely Butler groups.   A group $G$ is a finitely Butler group if $G$ is torsion-free and every pure subgroup of finite rank is a pure subgroup of a finite rank completely decomposable group (see \cite[\S 14.4]{fucb} for more details). 
\item $(TF\text{-l-cyc}, \leq_pp)$ where TF-l-cyc is the class of torsion-free locally cyclic groups.  A group $G$ is locally cyclic if  every finitely generated subgroup is cyclic. 
\end{enumerate} 
\end{example} 

\begin{remark}
It is worth pointing out that the second, third and fifth example are not first-order axiomatizable while the fourth one is probably not first-order axiomatizable. 
\end{remark}

\begin{remark}
The class of $\aleph_1\text{-free}$ groups is closed below the class of torsion-free groups, but does not satisfy Hypothesis \ref{hyp1} or Hypothesis \ref{hyp2}. This is the case as it does not have the amalgamation property. We showed that if a class satisfied either of the hypotheses then it had the amalgamation property (Lemma \ref{stru1} and Lemma \ref{stru2}). 

$(R\text{-Mod}, \leq_p)$ satisfies Hypothesis \ref{hyp1} and Hypothesis \ref{hyp2}, but it is not closed below any class of modules for most rings. For example, if $R = \mathbb{Z}$, this is the case as the class of abelian groups with pure embeddings does not admit intersections. 

 Therefore, there are classes studied in this section that do not satisfy Hypotheses \ref{hyp1} or \ref{hyp2} and there are classes satisfying those hypotheses that can not be handled with the methods of this section. 

\end{remark} 

\subsection{Stability} The proof of the next result is straightforward so we omit it. 

\begin{prop} 
If $\K^\star$ is closed below $\K$, then $\K^\star$ admits intersections. Moreover, for every $N \in \K^\star$ and $A \subseteq N$ we have that $cl^{N}_{\K}(A)=cl^{N}_{\K^\star}(A)$.
\end{prop}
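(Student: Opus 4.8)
The plan is to deduce everything from a single observation: the $\K$-intersection of a family of pure submodules, computed inside a model of $K^\star$, already lands in $K^\star$. Fix an arbitrary $N \in K^\star$ and $A \subseteq |N|$. Since $K^\star \subseteq K$ we have $N \in K$, so the hypothesis that $\K$ admits intersections applies to $N$: the module $cl^N_{\K}(A) = \bigcap\{M \leq_p N : M \in K,\ A \subseteq |M|\}$ belongs to $K$ and satisfies $cl^N_{\K}(A) \leq_p N$.

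First I would pull this $\K$-closure into $K^\star$. Because $cl^N_{\K}(A) \leq_p N$ and $N \in K^\star$ with $K^\star$ closed under pure submodules, it follows that $cl^N_{\K}(A) \in K^\star$. Thus $cl^N_{\K}(A)$ is a pure submodule of $N$, lies in $K^\star$, and contains $A$; in particular it is one of the modules over which the intersection defining $cl^N_{\K^\star}(A)$ ranges, whence $cl^N_{\K^\star}(A) \subseteq cl^N_{\K}(A)$.

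For the reverse inclusion I would use that $K^\star \subseteq K$: every $M \leq_p N$ with $M \in K^\star$ and $A \subseteq |M|$ also satisfies $M \in K$, so the family defining $cl^N_{\K^\star}(A)$ is a subfamily of the one defining $cl^N_{\K}(A)$. Intersecting over a smaller index family yields a larger module, so $cl^N_{\K}(A) \subseteq cl^N_{\K^\star}(A)$. Combining the two inclusions gives $cl^N_{\K}(A) = cl^N_{\K^\star}(A)$, which is exactly the moreover clause. Since this common module lies in $K^\star$ and is a pure submodule of $N$, the two defining conditions for admitting intersections hold for $\K^\star$, and the proof is complete.

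There is no substantive obstacle here; the statement is essentially a bookkeeping argument. The only points requiring care are the direction of the set-theoretic inclusions under intersection (a smaller index family produces a larger intersection) and the recognition that closure of $K^\star$ under pure submodules is precisely what certifies $cl^N_{\K}(A) \in K^\star$, which is the single hypothesis doing the real work.
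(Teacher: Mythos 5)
Your argument is correct, and since the paper omits this proof as straightforward, yours is precisely the intended bookkeeping: the two inclusions via $K^\star \subseteq K$ and via closure of $K^\star$ under pure submodules (which puts $cl^N_{\K}(A)$ into the $\K^\star$-family) are exactly what is needed. No gaps.
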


With it we can show that there is a close relation between Galois-types in $\K$ and $\K^\star$.

\begin{lemma}
Assume $\K^\star$ is closed below $\K$. Let $A \subseteq N_1, N_2 \in K^\star$, $\bar{a} \in N_1^{< \infty}$ and $\bar{b} \in N_2^{<\infty}$, then:

\[ \gtp_{\K}(\bar{a}/ A; N_1) = \gtp_{\K}(\bar{b}/A ; N_2) \text{ if and only if } \gtp_{\K^\star}(\bar{a}/ A; N_1) = \gtp_{\K^\star}(\bar{b}/A ; N_2) \]
\end{lemma}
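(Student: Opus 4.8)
The plan is to isolate a general fact about AECs admitting intersections and then read the lemma off it twice. For an AEC $\mathbf{L}=(L,\leq_p)$ of $R$-modules admitting intersections and closed under pure submodules, I claim that for $A\subseteq N,N'\in L$, $\bar a\in N^{<\infty}$ and $\bar b\in (N')^{<\infty}$,
\[ \gtp_{\mathbf{L}}(\bar a/A;N)=\gtp_{\mathbf{L}}(\bar b/A;N') \]
holds if and only if there is an isomorphism $g\colon cl^{N}_{\mathbf{L}}(A\bar a)\cong cl^{N'}_{\mathbf{L}}(A\bar b)$ with $g\rest A=\id$ and $g(\bar a)=\bar b$. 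Applying this characterization to $\K$ and to $\K^\star$ (both admit intersections, the latter by the preceding Proposition) and using that the two closures coincide will finish the proof, because the right-hand condition will turn out to be literally the same for $\K$ and $\K^\star$.

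The technical heart is that $\mathbf{L}$-embeddings preserve closures: if $f\colon N\to N'$ is an $\mathbf{L}$-embedding and $A\subseteq N$, then $f[cl^{N}_{\mathbf{L}}(A)]=cl^{N'}_{\mathbf{L}}(f[A])$. One inclusion is immediate: since $cl^{N}_{\mathbf{L}}(A)\leq_p N$, transitivity of purity gives $f[cl^{N}_{\mathbf{L}}(A)]\leq_p f[N]\leq_p N'$, so $f[cl^{N}_{\mathbf{L}}(A)]$ is among the pure submodels intersected to define $cl^{N'}_{\mathbf{L}}(f[A])$, whence $cl^{N'}_{\mathbf{L}}(f[A])\subseteq f[cl^{N}_{\mathbf{L}}(A)]$. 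For the reverse inclusion I would use coherence: $cl^{N'}_{\mathbf{L}}(f[A])\leq_p N'$ lies inside $f[N]\leq_p N'$, so $cl^{N'}_{\mathbf{L}}(f[A])\leq_p f[N]$; pulling back along the isomorphism $f\colon N\cong f[N]$ yields a pure submodel of $N$ containing $A$, which must contain $cl^{N}_{\mathbf{L}}(A)$ by minimality, and applying $f$ gives $f[cl^{N}_{\mathbf{L}}(A)]\subseteq cl^{N'}_{\mathbf{L}}(f[A])$.

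Given closure-preservation, the forward direction of the characterization follows by unwinding $\gtp_{\mathbf{L}}$ as the transitive closure of $E^{\mathbf{L}}_{\text{at}}$. A single atomic step provides $\mathbf{L}$-embeddings $f,f'$ over $A$ into a common $M\in L$ with $f(\bar a)=f'(\bar b)$; since $f,f'$ fix $A$, closure-preservation gives $f[cl^{N}_{\mathbf{L}}(A\bar a)]=cl^{M}_{\mathbf{L}}(A\cup f(\bar a))=cl^{M}_{\mathbf{L}}(A\cup f'(\bar b))=f'[cl^{N'}_{\mathbf{L}}(A\bar b)]$, so $(f')^{-1}\circ f$ restricts to the required isomorphism. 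As this relation on triples is symmetric and transitive (isomorphisms compose), it survives the finite $E^{\mathbf{L}}$-chain. The reverse direction needs no amalgamation: the inclusions $cl^{N}_{\mathbf{L}}(A\bar a)\leq_p N$ and $cl^{N'}_{\mathbf{L}}(A\bar b)\leq_p N'$ witness $\gtp_{\mathbf{L}}(\bar a/A;N)=\gtp_{\mathbf{L}}(\bar a/A;cl^{N}_{\mathbf{L}}(A\bar a))$ and similarly for $\bar b$, while $g$ itself witnesses $\gtp_{\mathbf{L}}(\bar a/A;cl^{N}_{\mathbf{L}}(A\bar a))=\gtp_{\mathbf{L}}(\bar b/A;cl^{N'}_{\mathbf{L}}(A\bar b))$; transitivity concludes.

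To finish, I would specialize to $\K$ and $\K^\star$. Since $N_1,N_2\in K^\star$, the preceding Proposition gives $cl^{N_1}_{\K}(A\bar a)=cl^{N_1}_{\K^\star}(A\bar a)$ and $cl^{N_2}_{\K}(A\bar b)=cl^{N_2}_{\K^\star}(A\bar b)$, so the ``existence of an isomorphism $g$'' condition produced by the characterization is identical for $\K$ and for $\K^\star$; hence the two Galois-type equalities are equivalent. I expect the main obstacle to be precisely the implication $\gtp_{\K}=\gtp_{\K}\Rightarrow\gtp_{\K^\star}=\gtp_{\K^\star}$, since an amalgam witnessing the $\K$-equality may fall outside $K^\star$: closure-preservation is exactly what lets me bypass amalgamation and route everything through the canonical, class-independent closures. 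The opposite implication is in any case easy, as $K^\star\subseteq K$ makes every $\K^\star$-amalgamation witness a $\K$-one.
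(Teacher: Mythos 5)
Your proposal is correct and takes essentially the same route as the paper: the paper's proof cites \cite[2.18]{vaseyu} for the characterization of Galois-types in classes admitting intersections via isomorphisms of closures over $A$, applies it to $\K$, notes that the closures coincide by the preceding proposition, and applies it again to $\K^\star$. The only difference is that you (correctly) reprove that cited characterization from scratch rather than quoting it.
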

\begin{proof}
The backward direction is obvious so we prove the forward direction. Since $\K$ admits intersection, by \cite[2.18]{vaseyu}, there is $f: cl^{N_1}_{\K}(\bar{a} \cup A) \cong_M cl^{N_2}_{\K}(\bar{b} \cup A)$ with $f(\bar{a})=\bar{b}$.  Then using the proposition above we have that $cl^{N_1}_{\K}(\bar{a} \cup A) = cl^{N_1}_{\K^\star}(\bar{a} \cup A)$  and $cl^{N_2}_{\K}(\bar{b} \cup A)= cl^{N_2}_{\K^\star}(\bar{b} \cup A)$. So the result follows from the fact that $\K^\star$ admits intersections and \cite[2.18]{vaseyu}.
\end{proof}

From that characterization we obtain the following.

\begin{cor}
Assume $\K^\star$ is closed below $\K$. 
\begin{enumerate}
\item Let $\lambda \geq \LS(\K^\star)$. If $\K$ is $\lambda$-stable, then $\K^\star$ is $\lambda$-stable.
\item  Let $\lambda$ be an infinite cardinal. If $\K$ is $(<\lambda)$-tame, then $\K^\star$ is $(<\lambda)$-tame.
\item If Galois-types in $\K$ are $pp$-syntactic, then Galois-types in $\K^\star$ are $pp$-syntactic.
\end{enumerate} 
\end{cor}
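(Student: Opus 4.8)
The whole corollary reduces to the preceding Lemma, which identifies $\K$-Galois-types with $\K^\star$-Galois-types over subsets of models of $K^\star$, together with two elementary observations that I would record first: since $K^\star \subseteq K$, every model of $\K^\star$ is simultaneously a model of $\K$ (so in particular $\LS(\K) \leq \LS(\K^\star)$ causes no trouble and cardinalities match up); and the $pp$-type $\pp(\bar{b}/A, N)$ is computed inside the module $N$ alone, hence is insensitive to whether we view $N$ as a member of $\K$ or of $\K^\star$. With these in place each part is a short transfer argument.

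For (1), fix $M \in K^\star$ with $\| M \| = \lambda$; then $M \in \K_\lambda$. The plan is to exhibit an injection $\gS_{\K^\star}(M) \to \gS_{\K}(M)$ sending $\gtp_{\K^\star}(a/M; N) \mapsto \gtp_{\K}(a/M; N)$, which is legitimate because $N \in K^\star \subseteq K$. Applying the preceding Lemma with $A = M$ shows in one stroke that this map is both well defined and injective, since it asserts that two such $\K^\star$-types agree if and only if the corresponding $\K$-types agree. Then $|\gS_{\K^\star}(M)| \leq |\gS_{\K}(M)| \leq \lambda$ by $\lambda$-stability of $\K$, which is exactly $\lambda$-stability of $\K^\star$.

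For (2), let $M \in K^\star$ and $p \neq q \in \gS_{\K^\star}(M)$, say $p = \gtp_{\K^\star}(a/M; N_1)$ and $q = \gtp_{\K^\star}(b/M; N_2)$. By the Lemma (with $A = M$) the associated $\K$-types $p' = \gtp_{\K}(a/M; N_1)$ and $q' = \gtp_{\K}(b/M; N_2)$ are distinct, so $(<\lambda)$-tameness of $\K$ (using $M \in \K$) supplies $A \subseteq |M|$ with $|A| < \lambda$ and $p'\rest{A} \neq q'\rest{A}$. I would then argue $p\rest{A} \neq q\rest{A}$ by contradiction: equality of the $\K^\star$-restrictions would, via a second application of the Lemma now with the \emph{subset} $A$ in place of $M$, force $p'\rest{A} = q'\rest{A}$. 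This yields $(<\lambda)$-tameness of $\K^\star$. Part (3) is the same idea with the chain shortened to two biconditionals: the Lemma (with $A = M$) converts equality of $\K^\star$-types over $M$ into equality of $\K$-types over $M$, and the hypothesis that Galois-types in $\K$ are $pp$-syntactic converts the latter into equality of the $pp$-types $\pp(\bar{b}_1/M, N_1) = \pp(\bar{b}_2/M, N_2)$, which are the same objects for $\K$ and $\K^\star$.

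There is no substantive obstacle once the preceding Lemma is granted; the only points requiring care are that in the tameness argument the Lemma must be invoked with a genuine subset $A \subsetneq M$ rather than a full model — permissible precisely because the Lemma is stated for arbitrary $A \subseteq N_1, N_2$ — and the class-independence of $pp$-types, which is what lets the syntactic side in (3) be inherited without re-proof.
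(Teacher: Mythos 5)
Your proposal is correct and matches the paper's (omitted) argument exactly: the paper presents this corollary as an immediate consequence of the preceding lemma identifying $\K$- and $\K^\star$-Galois-types over subsets of models of $K^\star$, and your three transfer arguments --- including the key observation that the lemma applies to arbitrary subsets $A$, which is what makes the tameness part go through --- are precisely the intended ones. No gaps.
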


Using the above result together with Theorem \ref{stat1} we are able to answer Question \ref{mainq} in the case of AECs of torsion-free abelian groups closed under pure submodules and with arbitrary large models.

\begin{lemma}
If $\K=(K, \leq_p)$ is an AEC closed under pure submodules and with arbitrary large models such that $K \subseteq TF$, then $\K$ is $\lambda$-stable for every infinite cardinal $\lambda$ such that $\lambda^{\aleph_0}=\lambda$.
\end{lemma}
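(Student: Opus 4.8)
The plan is to deduce the result from the stability of the ambient class $(TF,\leq_p)$ of all torsion-free abelian groups, using the transfer of stability to classes that are closed below a larger class (the corollary preceding this lemma). First I would verify that $(TF,\leq_p)$ itself is covered by the earlier theory. The class $TF$ is closed under direct products, direct limits and pure submodules, so it is a definable class in the sense of Example \ref{ex1}.(6); in particular $(TF,\leq_p)$ is an AEC satisfying Hypothesis \ref{hyp1}. Since $R=\mathbb{Z}$, we have $|R|+\aleph_0=\aleph_0$ and $\LS(TF)=\aleph_0$, so Theorem \ref{stat1} gives that $(TF,\leq_p)$ is $\lambda$-stable for every $\lambda$ with $\lambda^{\aleph_0}=\lambda$.

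Next I would check that $\K$ is closed below $(TF,\leq_p)$. Conditions (1) and (2) of the definition are immediate: $K\subseteq TF$ by hypothesis, $K$ is closed under pure submodules by hypothesis, and $TF$ is closed under pure submodules since any subgroup of a torsion-free group is torsion-free. For condition (3) I must show $(TF,\leq_p)$ admits intersections, which reduces to the elementary fact that in a torsion-free group an arbitrary intersection $M=\bigcap_i M_i$ of pure subgroups is pure: if $x\in M$ and $x=ny$ for some $y\in N$ and $n\geq 1$, then purity of each $M_i$ yields $y_i\in M_i$ with $ny_i=x$, and torsion-freeness forces $y_i=y$ for all $i$, whence $y\in M$. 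As $M$ is a subgroup of a torsion-free group it lies in $TF$, so each $cl^{N}_{TF}(A)$ is pure in $N$ and belongs to $TF$.

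Finally I would apply the preceding corollary with $\K^\star=\K$ and ambient class $(TF,\leq_p)$; this requires only $\lambda\geq\LS(\K)$. But $\LS(\K)=\aleph_0$: for a countable $A\subseteq M\in K$ the pure closure $cl^{M}_{TF}(A)$ is countable and, being a pure submodule of $M$, lies in $K$ by closure under pure submodules, which witnesses the downward L\"{o}wenheim-Skolem property with number $\aleph_0$. Every $\lambda$ with $\lambda^{\aleph_0}=\lambda$ is infinite (indeed $\lambda\geq 2^{\aleph_0}$), so $\lambda\geq\LS(\K)$; combined with the $\lambda$-stability of $(TF,\leq_p)$ the corollary delivers that $\K$ is $\lambda$-stable. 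The hypothesis that $\K$ has arbitrarily large models serves to make the stability statement non-vacuous. The only step demanding genuine verification is that $TF$ admits intersections, sketched above; the rest is a direct application of the apparatus already in place, so I do not anticipate a serious obstacle.
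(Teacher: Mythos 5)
Your proposal is correct and follows exactly the route the paper intends: show $(TF,\leq_p)$ is a definable class satisfying Hypothesis \ref{hyp1}, get its $\lambda$-stability from Theorem \ref{stat1}, observe that $\K$ is closed below $(TF,\leq_p)$ (the paper records this in Example \ref{ex3}), and transfer stability via the preceding corollary. You have merely filled in details the paper leaves implicit, such as the verification that arbitrary intersections of pure subgroups of a torsion-free group are pure and that $\LS(\K)=\aleph_0$.
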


\begin{remark}
The above result applies in particular to reduced torsion-free groups, $\aleph_1$-free groups and finitely Butler groups. The result for reduced torsion-free groups is in \cite[1.2]{sh820}, for $\aleph_1$-free groups is in \cite[2.9]{maztor}, and for finitely Butler groups is in \cite[5.9]{maz}.
\end{remark}

We see the next result as a weak approximation to Question \ref{mainq}. Recall that a ring $R$ is Von Neumann regular if and only if for every $r \in R$ there is an $s \in R$ such that $r= rsr$ if and only if every left $R$-modules is absolutely pure (see for example \cite[2.3.22]{prest09}).

\begin{lemma}\label{von}
Assume $R$ is a Von Neumann regular ring.
If $K$ is closed under submodules and has arbitrarily large models, then $\K =(K, \leq_p)$ is $\lambda$-stable for every infinite cardinal $\lambda$ such that $\lambda^{ |R| + \aleph_0} = \lambda$.
\end{lemma}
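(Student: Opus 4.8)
The plan is to exhibit $\K=(K,\leq_p)$ as a class closed below the class of all left $R$-modules and then transfer $\lambda$-stability from the latter, using the stability-transfer corollary for classes closed below together with Theorem \ref{stat1}. The decisive observation is that over a Von Neumann regular ring the pure-submodule relation collapses onto the ordinary submodule relation. Indeed, since $R$ is Von Neumann regular, every left $R$-module is absolutely pure; hence if $M$ is any submodule of any module $N$, then $M$ is pure in $N$, because an absolutely pure module is pure in every module containing it. Thus on $R\text{-Mod}$ the relations $\leq_p$ and $\leq$ coincide.

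First I would check that $\K':=(R\text{-Mod},\leq_p)$ admits intersections. Given $N\in R\text{-Mod}$ and $A\subseteq |N|$, the model $cl^{N}_{\K'}(A)=\bigcap\{M\leq_p N : A\subseteq |M|\}$ is just the submodule of $N$ generated by $A$, since every submodule of $N$ is pure; this submodule lies in $R\text{-Mod}$ and is pure in $N$, so $\K'$ admits intersections. Moreover $\K'$ trivially satisfies Hypothesis \ref{hyp1}, as $R\text{-Mod}$ is closed under direct sums and under pure-injective envelopes. Therefore Theorem \ref{stat1} applies and yields that $\K'$ is $\lambda$-stable whenever $\lambda\geq \LS(\K')=|R|+\aleph_0$ and $\lambda^{|R|+\aleph_0}=\lambda$.

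Next I would verify that $\K$ is closed below $\K'$: we have $K\subseteq R\text{-Mod}$; the ambient class $\K'$ admits intersections, as just shown; and both classes are closed under pure submodules, the class $R\text{-Mod}$ trivially and $K$ because it is closed under submodules and every pure submodule is a submodule. Since $K$ is closed under submodules, applying downward L\"{o}wenheim--Skolem to the submodule generated by a subset of size at most $|R|+\aleph_0$ shows $\LS(\K)\leq |R|+\aleph_0$; and any $\lambda$ with $\lambda^{|R|+\aleph_0}=\lambda$ satisfies $\lambda\geq |R|+\aleph_0\geq \LS(\K)$. Feeding the $\lambda$-stability of $\K'$ into part (1) of the stability-transfer corollary for classes closed below then gives that $\K$ is $\lambda$-stable, which is the claim.

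The only real obstacle is the opening step: recognizing that Von Neumann regularity forces every submodule to be pure, and hence that $(R\text{-Mod},\leq_p)$ admits intersections. Once $\leq_p$ and $\leq$ are identified, the hypothesis that $K$ is closed under submodules places $\K$ squarely inside the ``closed below'' framework of this section, and the stability transfer is automatic; the assumption that $K$ has arbitrarily large models is not needed for the stability estimate itself, serving only to make the statement non-vacuous.
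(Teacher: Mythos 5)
Your proposal is correct and follows essentially the same route as the paper: the paper's (very terse) proof likewise observes that Von Neumann regularity makes every module absolutely pure, hence every submodule pure, so that $(R\text{-Mod},\leq_p)$ admits intersections and $\K$ is closed below it, after which stability transfers down from $(R\text{-Mod},\leq_p)$ (which satisfies Hypothesis \ref{hyp1} and is $\lambda$-stable by Theorem \ref{stat1}) via the corollary for classes closed below. Your write-up just makes explicit the steps the paper leaves implicit.
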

\begin{proof}
We show that $\K$ is closed below $(R\text{-Mod}, \leq_p)$. Observe that the only things that need to be shown are that $(R\text{-Mod}, \leq_p)$ admits intersections and that $K$ is closed under pure submodules. This is the case as every module is absolutely pure by the hypothesis on the ring.
\end{proof}


\end{document}